\numberwithin{equation}{section}
\newtheorem{theorem}{Theorem}[section]
\newtheorem{proposition}[theorem]{Proposition}
\newtheorem{corollary}[theorem]{Corollary}
\newtheorem{lemma}[theorem]{Lemma}
\newtheorem{remark}[theorem]{Remark}
\newtheorem{definition}[theorem]{Definition}
\newtheorem{example}[theorem]{Example}
\def\wt{\widetilde}
\def\cH{\mathcal H}
\def \gH{\mathfrak H}
\def \gl{\lambda}
\def \Ker{\text{Ker}}
\newcommand{\col}{\mathrm{col}}
\DeclareMathOperator{\diag}{diag}
\DeclareMathOperator{\Span}{span}
\DeclareMathOperator{\supp}{supp}
\DeclareMathOperator{\const}{const}
\DeclareMathOperator{\dom}{dom}
 \DeclareMathOperator{\tr}{tr}
\DeclareMathOperator{\im}{Im}
\renewcommand{\phi}{\varphi}
\begin{document}

\sloppy

\title[On the completeness of the system of root vectors]
{On the completeness of the system of root vectors for first-order
systems}

\author{M.M. Malamud, L.L. Oridoroga}

   \begin{abstract}
The paper is concerned with the completeness problem of root
functions of general boundary value problems for first order
systems of ordinary differential equations. Namely, we introduce
and investigate the class of \emph{weakly regular boundary
conditions}. We show that this class is much broader than the
class of {\em regular boundary conditions} introduced by
G.D.~Birkhoff and R.E.~Langer. Our main result states that the
system of root functions of a boundary value problem is complete
and minimal provided that the boundary conditions are weakly
regular. Moreover, we show that in some cases \emph{the weak
regularity} of boundary conditions \emph{is also necessary} for
the completeness. Also we investigate the completeness for
$2\times 2$ Dirac and Dirac type equations subject to irregular or
even to degenerate boundary conditions.

We emphasize that our results are the first results on the
completeness problem for general first order systems even in the
case of regular boundary conditions.
      \end{abstract}

\maketitle

\section{Introduction}

Spectral theory of non-selfadjoint boundary  value problems (BVP)
for $n$th order  ordinary differential equations (ODE)
   \begin{equation}\label{0.1}
y^{(n)} + q_1y^{(n-2)} + ... + q_{n-1}y = \lambda ^n y
   \end{equation}
on a finite interval $\mathcal{I}=(a,b)$ takes  its origin in the
classical papers by Birkhoff~\cite{Bir08},~\cite{Bir08exp} and
Tamarkin~\cite{Tam12}, ~\cite{Tam17},~\cite{Tam28}. They
introduced the concept of {\em regular boundary conditions} (BC)
and investigated the asymptotic behavior of eigenvalues and
eigenfunctions of such problems for ODE. Moreover, they proved
that the system of root functions, i.e. eigenfunctions and
associated functions (EAF) of the regular BVP is complete. Their
results are also treated in classical monographs (see, for
instance, \cite[Section 2]{Nai69} and \cite[Chapter
19]{DunSch71}).

However, some natural and important boundary conditions are not
regular. For instance, a boundary value problem with separated
boundary conditions is regular if and only if $n=2l$, where $l$ is
the number of boundary conditions   at the left (right) endpoint
of the interval $\mathcal{I}$. Note that the completeness of EAF
of boundary value problems with an arbitrary separated BC was
stated (without proof) much later by M.V. Keldysh in his famous
communication \cite{Kel51}. However, the proof of this result was
first appeared in the paper by A.A. Shkalikov \cite{Shk76}. The
completeness property of other non-regular BVP for $n$th order
ordinary differential equations on $[0, 1]$ has been studied by
A.G. Kostyuchenko and A.A. Shkalikov \cite{KosShk78}, A.P. Khromov
\cite{Khr3}, V.S. Rykhlov and many others.

On the other hand,  V.P.~Mihailov~\cite{Mikh62} and
G.M.~Keselman~\cite{Kes64} independently proved that the system of
EAF of a boundary value problem for equation \eqref{0.1} forms a
Riesz basis provided that the boundary conditions are {\em
strictly regular}. Similar results are also obtained
in~\cite[Chapter 19.4]{DunSch71}. Moreover, for boundary
conditions which are regular but not strictly regular,
A.A.~Shkalikov \cite{Shk79},~\cite{Shk82} proved that the system
of EAF forms a Riesz basis of subspaces.

In this paper we consider first order systems of ODE of the form
      \begin{equation} \label{1.1}
Ly :=  L(Q)y := \frac{1}{i}B \frac{dy}{dx} + Q(x)y = {\lambda}
y,\quad y = \col(y_1,...,y_n),
      \end{equation}
where $B$ is a non-singular diagonal $n\times n$ matrix,
     \begin{equation} \label{1.2}
B = \diag(b^{-1}_1I_{n_1}, \ldots, b^{-1}_rI_{n_r}) \in {\Bbb
C}^{n\times n}, \qquad n=n_1+...+n_r,
      \end{equation}
with complex entries  satisfying $b_j\ne b_k$ for $j\ne k$, and
$Q(\cdot)$ is a potential matrix. We also assume that $Q(\cdot)\in L^2([0,1];
{\Bbb C}^{n\times n})$. In the sequel we consider its
block-matrix representation $Q=(q_{jk})^r_{j,k=1}$ with respect to
the orthogonal decomposition ${\Bbb C}^n={\Bbb C}^{n_1}\oplus
...\oplus {\Bbb C}^{n_r}$. With
the system \eqref{1.1} one  associates, in a natural way, the maximal operator $L = L(Q)$ acting in
$L^2([0,1];{\Bbb C}^n)$ on the domain $\dom(L) = W^1_2([0,1];{\Bbb C}^n).$

Note that, systems form  a more general object than ordinary
differential equations. Namely, the $n$th-order differential
equation \eqref{0.1} can be reduced to the system~\eqref{1.1} with
$r=n$ and ${b}_j = \exp\left(2\pi ij/n\right)$ (see~\cite{Mal99}).
The systems \eqref{1.1}  are of significant interest in some
theoretical and practical questions. For instance, if $n=2m$,
$B=\diag(I_m, -I_m)$ and $q_{11}= q_{22}=0$, the
system~\eqref{1.1} is equivalent to the Dirac
system~\cite{LevSar88}, \cite{Mar77}.
  Note also that equation~\eqref{1.1} is used
to integrate  the problem of $N$ waves arising in the nonlinear
optics~\cite{ZMNovPit80}.

To obtain a BVP, we adjoin to equation \eqref{1.1} the following boundary
conditions
   \begin{equation}\label{1.3}
Cy(0) +  D y(1)=0,  \qquad    C= (c_{jk}),\ \  D = (d_{jk}) \in
{\Bbb C}^{n\times n}.
   \end{equation}
We denote by $L_{C, D} := L_{C, D}(Q)$ the operator associated  in $L^2([0,1];
{\Bbb C}^n)$ with the  BVP~\eqref{1.1}\ --\
\eqref{1.3}. It is defined as the  restriction of $L = L(Q)$ to
the domain
   \begin{equation}\label{1.3BB}
\dom(L_{C, D}) = \{y\in W^1_2([0,1];{\Bbb C}^n):\ Cy(0) + D
y(1)=0\}.
   \end{equation}
Moreover, in what follows we always impose the maximality
condition
   \begin{equation}\label{1.4}
{\rm rank}(C\,\,D)=n,
  \end{equation}
or equivalently
\[
 \ker(CC^*+DD^*)=\{0\}.
\]

Apparently,  the spectral problem \eqref{1.1}--\eqref{1.3} has first
been investigated by G.~D.~Birkhoff and R.~E.~Langer
\cite{BirLan23}. Namely, they have extended some previous results
of~Birkhoff and Tamarkin on non-selfadjoint BVP for ODE  to the
case of BVP \eqref{1.1}--\eqref{1.3}.  More precisely,  they
introduced the concepts of \emph{regular and strictly regular boundary
conditions} \eqref{1.3} and investigated  the asymptotic behavior
of eigenvalues and  eigenfunctions of the corresponding BVP
(the operator $L_{C, D}$). Moreover, they proved  \emph{a pointwise
convergence result} on spectral decompositions of the operator
$L_{C, D}$ corresponding to the BVP \eqref{1.1}--\eqref{1.3}.

However, to the best of our knowledge  the problem  of the
completeness of the root system \emph{of a general  BVP}
\eqref{1.1}--\eqref{1.3}  has not been investigated yet. Some
results in this direction were known only for the case of Dirac
systems. The present paper presents \emph{the first results in
this direction.}
More precisely, we introduce the concept of \emph{weakly regular} BC
for the  system \eqref{1.1} and establish the completeness of
EAF for this class of BVP (note that this class contains boundary conditions which are
regular in the sense of \cite{BirLan23}).

To state the main results, we need to the following construction.
Let $A=\diag(a_1,\dots,a_n)$ be a diagonal matrix with entries
$a_k$ (not necessarily distinct) that are not lying on the
imaginary axis, $\Re a_k \ne 0$. Starting with arbitrary matrices
$C, D\in \mathbb{C}^{n\times n}$, we define the auxiliary matrix
$T_A(C,D)\in  \mathbb{C}^{n\times n}$ as follows:
\begin{itemize}
\item if $\Re
a_k>0$, then the $k$th column in the
matrix $T_A(C,D)$ coincides with the $k$th column of the matrix $C$,
\item if $\Re
a_k<0$, then the $k$th column in the
matrix $T_A(C,D)$ coincides with the $k$th column of the matrix $D$.
\end{itemize}
 It is clear that $T_A(C,D)=T_{-A}(D,C)$.

Let us recall the definition of regular boundary conditions from
\cite{BirLan23}. Consider the lines $l_j := \{\lambda \in
\mathbb{C} : \Re(i b_j \lambda)=0\}$, $j \in \{1,2,\ldots,r\}$, of
the complex plane. The lines  $l_j$ divide the complex plane in $m
\le 2r$ sectors  $\sigma_1, \sigma_2, \ldots \sigma_m$. Let $z_1,
z_2, \ldots z_m$ be  complex numbers such that $i z_j$ lies in the
interior of $\sigma_j, j\in \{1,\ldots, m\}$. The boundary
conditions \eqref{1.3} are called regular whenever
\begin{equation}\label{regular}
    \det T_{z_jB}(C,D) \ne 0 , \qquad  j\in \{1,\ldots, m\}.
\end{equation}
Note that the boundary conditions \eqref{1.3} are regular if and only
if $\det T_{zB}(C,D) \ne 0$ for every  \emph{admissible} $z \in
\mathbb{C},$ i.e. for such $z$ that $\Re(zB)$ is non-singular.
  \begin{definition}\label{def1.1}
The boundary conditions  \eqref{1.3} are called \emph{weakly
$B$-regular} (or, simply, weakly regular) if there exist three
complex numbers $z_1, z_2, z_3,$ satisfying the following
conditions:

(a) the origin is an interior point of the triangle $\triangle
_{z_1z_2z_3};$

(b) $\det \,T_{z_jB}(C,D)\not = 0   \quad\text{for}\quad   j\in
\{1,2,3\}$.
\end{definition}

Now the first  main result of the paper reads as follows.
  \begin{theorem}\label{th2.1}
Let $Q\in  L^2([0,1]; {\Bbb C}^{n\times n})$ and let boundary
conditions  \eqref{1.3} be weakly $B$-regular. Then the system of
root functions of the BVP ~\eqref{1.1}--\eqref{1.3} (of the
operator $L_{C,D}(Q)$) is complete and minimal in $L^2[0,1]\otimes
\mathbb{C}^n$.
  \end{theorem}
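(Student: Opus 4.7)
The strategy is to apply a Keldysh--Gohberg--Krein completeness theorem to the resolvent $R(\lambda)=(L_{C,D}(Q)-\lambda)^{-1}$, using the weak regularity hypothesis to supply the necessary resolvent estimates on three rays surrounding the origin. Accordingly, I would introduce the fundamental matrix solution $\Phi(\cdot,\lambda)$ of $(L-\lambda)\Phi=0$ normalized by $\Phi(0,\lambda)=I$, set $\Delta(\lambda):=\det\bigl(C+D\Phi(1,\lambda)\bigr)$ (whose zeros, counted with multiplicity, are the eigenvalues of $L_{C,D}(Q)$), and express the Green's function of $L_{C,D}$ explicitly through $\Phi$ and $(C+D\Phi(1,\lambda))^{-1}$.

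The first step is to establish Birkhoff--Tamarkin type asymptotics of $\Phi(1,\lambda)$ as $|\lambda|\to\infty$ along the ray $\arg\lambda=\arg z_j$. For $Q\in L^2$ these can be obtained via an integral transformation reducing $L(Q)$ to $L(0)$ modulo a Volterra-type perturbation; along the ray $\lambda=tz_j$ one has $\Phi(1,\lambda)=(I+o(1))\,e^{i\lambda B^{-1}}$ in an appropriate componentwise sense. Substituting into $\Delta$ and grouping columns according to the sign of $\Re(iz_jb_k^{-1})$ then yields
\[
e^{-\varkappa_j\lambda}\,\Delta(\lambda)=\det T_{z_jB}(C,D)+o(1),\qquad \lambda=tz_j,\ t\to+\infty,
\]
for an appropriate exponent $\varkappa_j$ (the sum of those $ib_k^{-1}$, weighted by $n_k$, for which $\Re(iz_jb_k^{-1})>0$). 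Thus the weak regularity hypothesis $\det T_{z_jB}(C,D)\neq 0$ is exactly the statement that $\Delta$ attains its maximal exponential growth along each of the three rays.

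The main technical step is to upgrade this scalar computation to an operator-norm resolvent bound. Applying the same column-grouping argument to the \emph{inverse} $(C+D\Phi(1,\lambda))^{-1}$---multiplying the columns associated with $\Re(iz_jb_k^{-1})>0$ by the decaying exponentials $e^{-i\lambda b_k^{-1}}$---and combining with the explicit Green's function formula, one deduces
\[
\bigl\|R(\lambda)\bigr\|_{L^2([0,1];\mathbb C^n)}=O\bigl(|\lambda|^{-1}\bigr),\qquad |\lambda|\to\infty,\ \arg\lambda=\arg z_j,
\]
outside fixed neighborhoods of the spectrum. This is the heart of the argument and the main obstacle: uniform control of the matrix inversion with only $L^2$ regularity on $Q$ requires a delicate treatment of the off-diagonal Birkhoff remainders, which are small only in an averaged sense and must nonetheless be shown to leave the dominant block $T_{z_jB}(C,D)$ invertible for all sufficiently large $|\lambda|$ on the ray.

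Finally, since the eigenvalues of $L_{C,D}(Q)$ grow linearly, $R(\lambda_0)$ is a compact operator of finite order $\leq 1$ for any regular $\lambda_0$. The hypothesis that $0$ lies in the interior of $\triangle_{z_1z_2z_3}$ guarantees that any two adjacent rays among $\{\arg\lambda=\arg z_j\}_{j=1,2,3}$ subtend an angle strictly less than $\pi$, which together with the ray estimates above supplies all hypotheses of Keldysh's completeness theorem; completeness of the root system follows. Minimality is then automatic from the discreteness of the spectrum and the finite-dimensionality of each root subspace: the Riesz spectral projectors associated with distinct eigenvalues have pairwise zero ranges, so no root vector lies in the closed linear span of the remaining ones.
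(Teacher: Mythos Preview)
Your strategy via a Keldysh--Gohberg--Krein theorem is a legitimate alternative, but it is \emph{not} the route the paper takes, and the step you yourself flag as ``the heart of the argument and the main obstacle''---the uniform operator-norm bound $\|R(\lambda)\|=O(|\lambda|^{-1})$ along the three rays---is precisely what the paper circumvents.

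The paper never estimates the resolvent of $L_{C,D}$ in operator norm. Instead it fixes a hypothetical $f$ orthogonal to the root system and, for each $j$, forms the scalar entire function
\[
G_j(\lambda)=\frac{1}{\Delta(\lambda)}\sum_{k}\Delta_{jk}(\lambda)\int_0^1\bigl\langle\Phi_k(x;\lambda),f(x)\bigr\rangle\,dx,
\]
where the $\Delta_{jk}$ are cofactors of $C+D\Phi(1;\lambda)$. Passing to the Birkhoff fundamental system $Y(x;\lambda)$ in each sector, the cofactors cancel exactly the growing exponentials in the columns of $Y$, so the numerator is $o(e^{\beta_m t})$ along the ray $\lambda=iz_mt$, while weak regularity gives $\Delta\sim(\det T_{z_mB}(C,D))e^{\beta_m t}$. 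Thus $G_j\to 0$ on each of the three rays, and Phragm\'en--Lindel\"of plus Liouville force $G_j\equiv 0$. This yields $\int_0^1\langle\Phi_k(x;\lambda),f\rangle\,dx\equiv 0$ for all $k$ and all $\lambda$; a final step using the \emph{Volterra} resolvent of the Cauchy problem $y(0)=0$ (not of $L_{C,D}$) then shows $f=0$.

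The gain is that these scalar estimates are strictly easier than your resolvent bound: one only needs to control a single inner product involving the fixed $f$, and the elementary decay $\int_0^1(e^{s_-tx}+e^{s_+t(x-1)})^2\,dx\to 0$ suffices, with no matrix inversion of $C+D\Phi(1,\lambda)$ and no Green's-kernel bookkeeping. Your route would require inverting that matrix uniformly with only $o(1)$ Birkhoff remainders (not $O(\lambda^{-1})$, since $Q$ is merely $L^2$) and then bounding the full Green's kernel in $L^2$ operator norm; for \emph{regular} BC this is essentially Birkhoff--Langer, but for merely weakly regular BC the resolvent may fail to decay on rays outside the three distinguished ones, and verifying the Keldysh hypotheses is not automatic. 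So your proposal is a reasonable outline, but the acknowledged obstacle is real, and the paper's direct argument sidesteps it entirely.

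Your minimality sketch is essentially correct but imprecise: mutual disjointness of root subspaces does not by itself prevent a root vector from lying in the closed span of the others. The paper makes this precise in Lemma~\ref{Lem_minimality} by building the biorthogonal system out of the root subspaces of the adjoint, using that the resolvent has trivial kernel.
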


We emphasize that the class of \emph{weakly regular boundary
conditions is much wider than  the class of regular BC}. For
instance, for splitting  boundary conditions  \eqref{1.3} to be
regular it is necessary that: (i) $n=2k$, where $k$ is the number
of conditions at zero; (ii) the matrix $\Re(zB)$ has zero
signature for every admissible $z.$ However,  for odd $n=2k+1$
splitting BC with  $k$ conditions at 0 are \emph{weakly
$B$-regular}, in general, whenever $b_j=\exp\left(\frac{2 \pi i
j}{n}\right)$  (see Example~\ref{ex.k.k+1} for details). Moreover,
there exist splitting \emph{irregular} but  \emph{weakly regular}
BC for $n=2k$ too.

In the case of $B=B^*$ weak regularity of boundary conditions
\eqref{1.3} is equivalent to their  regularity. Moreover, denoting
by  $P_+$ and $P_-$ the spectral projectors onto "positive"\ and
"negative"\ parts of the spectrum of $B=B^*$, respectively, one
expresses  the regularity of boundary conditions  \eqref{1.3}  as
follows:
   \begin{equation}\label{2.4Intro}
\det(CP_{+}  +\  DP_{-}) \ne 0 \quad\text{and}  \quad \det(CP_{-}
+\ DP_{+}) \ne 0.
\end{equation}

Thus, Theorem \ref{th2.1} yields the following result.
    \begin{corollary}\label{Cor1.3Intro}
Let $Q\in L^2[0,1]\otimes {\Bbb C}^{n\times n},$   $B=B^*$ and let
conditions \eqref{2.4Intro} be satisfied. Then the system of root
functions of the BVP ~\eqref{1.1}--\eqref{1.3}  is complete and
minimal in $L^2[0,1]\otimes \mathbb{C}^n$.
   \end{corollary}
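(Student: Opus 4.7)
The plan is to deduce Corollary~\ref{Cor1.3Intro} as a direct consequence of Theorem~\ref{th2.1} by verifying that \eqref{2.4Intro} implies weak $B$-regularity in the sense of Definition~\ref{def1.1}. First, I would make explicit the form of $T_{zB}(C,D)$ in the self-adjoint setting. Since $B = B^*$ is diagonal, its entries $b_j^{-1}$ are real, so for any $z \in \mathbb{C}$ with $\Re z \ne 0$ one has $\Re(z b_j^{-1}) = (\Re z)\, b_j^{-1}$, whose sign is determined entirely by the signs of $\Re z$ and $b_j$. Hence $z$ is admissible precisely when $\Re z \ne 0$, and the column-selection rule defining $T_{zB}(C,D)$ collapses to exactly two cases:
\[
T_{zB}(C,D) \;=\;
\begin{cases}
CP_+ + DP_-, & \Re z > 0, \\
CP_- + DP_+, & \Re z < 0.
\end{cases}
\]

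Second, I would exhibit three admissible points realizing both sign patterns and enclosing the origin in their triangle. A concrete choice is $z_1 = 1$, $z_2 = -1 + i$, $z_3 = -1 - i$: the vertices are in general position, and the origin lies strictly inside $\triangle_{z_1 z_2 z_3}$. Since $\Re z_1 > 0$ while $\Re z_2, \Re z_3 < 0$, the three non-degeneracy requirements $\det T_{z_j B}(C,D) \ne 0$ reduce to the two conditions in \eqref{2.4Intro}. Thus the boundary conditions \eqref{1.3} are weakly $B$-regular, and Theorem~\ref{th2.1} immediately yields completeness and minimality of the root system in $L^2[0,1] \otimes \mathbb{C}^n$.

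There is no substantial obstacle: the argument is essentially a definitional unwinding that exploits the real-spectrum structure of $B$ to reduce the $m$-many regularity tests of \eqref{regular} to a pair of sign configurations. The only minor point worth checking is that the origin lies in the \emph{interior} of the chosen triangle rather than on its boundary, which is plain for the explicit vertices above since the three points are affinely independent and the convex hull contains a neighborhood of $0$.
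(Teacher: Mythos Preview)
Your proof is correct and follows essentially the same route as the paper: you identify $T_{zB}(C,D)$ with $CP_\pm + DP_\mp$ according to the sign of $\Re z$, and then exhibit three admissible points whose triangle contains the origin. The paper packages the last step slightly differently, invoking Corollary~\ref{cor2.1} with $z=1$ (i.e.\ using the pair $T_B(C,D)$ and $T_{-B}(C,D)$) rather than writing down three explicit vertices, but the content is identical.
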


In some particular  cases  this statement has  been obtained  by
V.A. Marchenko \cite{Mar77}  $(2\times 2 \ \text{Dirac system,}\ B
= \diag(-1,1))$ and V.P. Ginzburg \cite{Gin71}  $(B=I_n, Q=0)$
(see Remark \ref{rem3.3} below).

 Note that   conditions \eqref{2.4Intro} are also necessary for
completeness if  $Q=0.$ However, they  \emph{are no longer
necessary} if  $Q\not \equiv 0$ even for $Q = Q^*.$ We
demonstrate this fact in passing by  stating  a special case of
Theorem  \ref{th3.1}  that gives   \emph{new  conditions of the
completeness of irregular} BVP for $2\times 2$ Dirac systems.
   \begin{proposition}\label{prop1.2_Intro}
Let $B=\diag(-1,1)$, $Q=
\begin{pmatrix}
0&Q_{12}\\
Q_{21}&0
\end{pmatrix}$ and  $Q_{12}(\cdot),
Q_{21}(\cdot) \in C[0,1].$ Assume that
\begin{equation}  \label{3.7AAIntro}
J_{13}Q_{12}(0) - J_{42}Q_{21}(1) \ne 0,  \qquad
J_{13}Q_{12}(1) - J_{42}Q_{21}(0) \ne 0,
\end{equation}
where $J_{13}:=\det
\begin{pmatrix}
c_{11}&d_{11}\\
c_{21}&d_{21}
\end{pmatrix}, J_{42}:=\det
\begin{pmatrix}
d_{12}&c_{12}\\
d_{22}&c_{22}
\end{pmatrix}$.
Then the system of root functions of the
problem~\eqref{1.1}--\eqref{1.3} is complete and minimal in
$L^2\left([0,1];{\Bbb C}^2\right)$.
   \end{proposition}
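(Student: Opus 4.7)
The plan is to compute the characteristic determinant of the problem explicitly, extract the leading coefficients of its two dominant exponentials via the Neumann series for the fundamental matrix, and feed the resulting lower bound on $|\Delta(\lambda)|$ into a Phragm\'en--Lindel\"of completeness argument. Let $Y(x,\lambda)$ denote the fundamental $2\times 2$ solution of $L(Q)y=\lambda y$ normalized by $Y(0,\lambda)=I$. Since $\tr B=0$ and $Q$ is off-diagonal one has $\tr(BQ)=0$, so Liouville's formula gives $\det Y(1,\lambda)\equiv 1$. Expanding $\Delta(\lambda):=\det(C+DY(1,\lambda))$ by multilinearity in the columns yields the compact form
$$\Delta(\lambda)=J_{12}+J_{34}+J_{32}Y_{11}(1,\lambda)+J_{14}Y_{22}(1,\lambda)+J_{13}Y_{12}(1,\lambda)+J_{42}Y_{21}(1,\lambda),$$
where $J_{\alpha\beta}$ is the $2\times 2$ minor of the block matrix $(C\mid D)$ formed from its $\alpha$-th and $\beta$-th columns; $J_{13}$ and $J_{42}$ are precisely the quantities appearing in \eqref{3.7AAIntro}, while $J_{14}$ and $J_{32}$ are the minors that govern (weak) regularity of \eqref{1.3}.

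I would then substitute the Neumann iteration
$$Y(x,\lambda)=e^{i\lambda Bx}-i\int_{0}^{x}e^{i\lambda B(x-t)}BQ(t)Y(t,\lambda)\,dt$$
into the identity above and integrate by parts (using $C^{1}$-approximants of $Q_{12},Q_{21}$ together with a density argument) to produce two-term asymptotic expansions of the entries $Y_{jk}(1,\lambda)$ in each open half-plane. In $\{\Im\lambda>0\}$ the dominant exponential of $\Delta$ is $e^{-i\lambda}$, with coefficient
$$\gamma_{-}(\lambda)=J_{32}+\frac{J_{42}Q_{21}(1)-J_{13}Q_{12}(0)}{2\lambda}+o(1/\lambda),$$
while in $\{\Im\lambda<0\}$ the dominant exponential is $e^{i\lambda}$, with coefficient
$$\gamma_{+}(\lambda)=J_{14}+\frac{J_{13}Q_{12}(1)-J_{42}Q_{21}(0)}{2\lambda}+o(1/\lambda).$$
Hypothesis \eqref{3.7AAIntro} is precisely the statement that both numerators of the $1/\lambda$-corrections are nonzero. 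Hence $\gamma_{\pm}(\lambda)\neq 0$ for $|\lambda|$ large, even if $J_{32}$ or $J_{14}$ vanishes (i.e.\ even if the boundary conditions fail to be weakly regular in the sense of Definition~\ref{def1.1}), and one obtains the lower bound
$$|\Delta(\lambda)|\;\geq\;c\,\frac{e^{|\Im\lambda|}}{|\lambda|},\qquad |\Im\lambda|\geq h,\ |\lambda|\geq R.$$

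To conclude I would invoke the standard Phragm\'en--Lindel\"of completeness scheme. Let $f\in L^{2}([0,1];\mathbb{C}^{2})$ be orthogonal to every root function of $L_{C,D}(Q)$. For each test $g\in L^{2}$ the residues of $\phi(\lambda):=\langle(L_{C,D}-\lambda)^{-1}f,g\rangle$ at the eigenvalues are scalar multiples of inner products of $f$ with root vectors and therefore vanish, so $\phi$ extends to an entire function. Writing the resolvent through its Green matrix and combining the trivial bound $O(e^{|\Im\lambda|})$ on its numerator with the lower bound above yields $|\phi(\lambda)|\le C|\lambda|$ in $\{|\Im\lambda|\ge h\}$; a direct resolvent estimate supplies the same polynomial growth inside the horizontal strip, away from the eigenvalues. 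Phragm\'en--Lindel\"of in horizontal half-planes then forces $\phi$ to be a polynomial, and a further refinement (tracking the decay of $R_{\lambda}$ along appropriate vertical contours) shows $\phi\equiv 0$; varying $g$ gives $f=0$.

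The hardest step I anticipate is the second: extracting the $O(1/\lambda)$ corrections to $Y_{12}(1,\lambda)$ and $Y_{21}(1,\lambda)$ in each half-plane with the correct pairing of $Q_{12}(0),Q_{12}(1)$ and $Q_{21}(0),Q_{21}(1)$ to the two exponentials, and securing the $o(1/\lambda)$ remainder from the sole hypothesis $Q_{12},Q_{21}\in C[0,1]$. This is precisely the place where the two combinations appearing in \eqref{3.7AAIntro} emerge as the quantities that effectively substitute for weak regularity once $J_{14}$ or $J_{32}$ is permitted to vanish.
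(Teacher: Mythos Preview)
Your outline is correct and the asymptotic formulas for $\gamma_\pm(\lambda)$ match exactly what the paper obtains (Theorem~\ref{th3.1} specialized to $b_1=-1,\ b_2=1$). The two arguments differ, however, in how that expansion is produced and in how the Phragm\'en--Lindel\"of step is packaged.

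For the asymptotics, the paper does \emph{not} iterate the Volterra equation directly. Instead it invokes the triangular transformation operators $e_\pm=(I+K_\pm)e_\pm^0$ from \cite{Mal99}: when $Q\in C[0,1]$ the kernels $K^\pm_{jk}$ are $C^1$ on the triangle, so a single integration by parts in $\int_0^1 R_{jk}^\pm(t)e^{ib_j\lambda t}\,dt$ already gives the $1/\lambda$ term with remainder $o(1/\lambda)$; the specific values $K^\pm_{jk}(1,1)$ computed in Lemma~\ref{lem3.2} then produce the combinations $J_{13}Q_{12}(0)-J_{42}Q_{21}(1)$ and $J_{13}Q_{12}(1)-J_{42}Q_{21}(0)$. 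This is exactly the regularity gain you are trying to manufacture by a $C^1$-approximation/density argument on the Neumann series, and it buys a much cleaner proof: the infinitely many higher iterates are absorbed once and for all into a single $C^1$ kernel, whereas in your scheme you must separately verify that every iterate beyond the first contributes $o(1/\lambda)$ uniformly in the sector.

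For the completeness step, the paper builds explicit solutions $w_j(x;\lambda)$ whose $\lambda$-derivatives at each eigenvalue span the root subspace, so that $f\perp\{\text{root functions}\}$ forces $G_j(\lambda)=\bigl(w_j(\cdot;\lambda),f\bigr)/\Delta(\lambda)$ to be entire; it then bounds the numerator by $o(e^{|\Im\lambda|})$ via the Birkhoff asymptotics and concludes $G_j\equiv 0$. Your resolvent version is equivalent in spirit, but note a small slip: the Laurent principal part of $(L_{C,D}-\lambda)^{-1}$ at $\lambda_k$ has \emph{range} in the root subspace, so the pairing that becomes entire when $f$ is orthogonal to the root functions is $\langle R_\lambda g,\,f\rangle$, not $\langle R_\lambda f,\,g\rangle$. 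With that correction your argument goes through.
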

We emphasize that the assumptions  of Proposition
\ref{prop1.2_Intro} depend on $Q$ although they guarantee the
completeness even if both conditions \eqref{2.4Intro} are
violated. However, these assumptions cover  \emph{irregular} and
even \emph{degenerate BC}~\eqref{1.3}.

In connection with Corollary \ref{Cor1.3Intro} and Proposition
\ref{prop1.2_Intro} we mention the papers
~\cite{TroYam01},~\cite{TroYam02}, \cite{HasOri09}, \cite{Mit04}
and~\cite{DjaMit09Sing}, \cite{DjaMit09}, \cite{DjaMit09Hill},
\cite{DjaMit10Trig}, \cite{DjaMit10}, \cite{DjaMit11Crit},
\cite{DjaMit11Equi}, \cite{DjaMit11CritDir}, that appeared during
the last decade. Basically they are devoted to the Riesz basis
property of EAF for BVP with  strictly regular (and just regular)
BC for $2\times 2$ Dirac systems. The most complete and detailed
results in this direction have been obtained by P.~Djakov and
B.~Mityagin~\cite{DjaMit09Sing},~\cite{DjaMit09},~\cite{DjaMit10},
\cite{DjaMit11Crit}, \cite{DjaMit11Equi}, \cite{DjaMit11CritDir}.
 In the recent preprint~\cite{DjaMit11Equi} they proved
equiconvergence and pointwise convergence of spectral
decompositions of  Dirac operators with regular BC. The result on
pointwise convergence  improves and generalizes the corresponding
result from~\cite{BirLan23} for $2\times2$ Dirac systems.
Moreover,  in \cite{DjaMit11Crit}, \cite{DjaMit11CritDir}  a
\emph{criterion} for EAF to form a Riesz basis for periodic
(resp., antiperiodic) 1D Dirac operator is established .

Let us also  mention the recent papers by F. Gesztesy and V.
Tkachenko~\cite{GesTka09},~\cite{GesTka11}. In particular,
in~\cite{GesTka11}, as well as in the recent preprint by P.~Djakov
and B.~Mityagin~\cite{DjaMit11Crit},  the authors established a
\emph{criterion}  for eigenfunctions and associated functions to
form a Riesz basis for periodic (resp., antiperiodic)
Sturm--Liouville operators on $[0,1]$. This criterion is
formulated  in terms of periodic (resp., antiperiodic) and
Dirichlet eigenvalues.

Note also that using the approach from \cite{MasVor70} Theorems
\ref{th2.1} and \ref{th3.1}  can be applied for the study  of
uniqueness of mixed BVP for first order systems of partial
differential equations.

The paper is organized as follows. In Section 2 we present a
result on asymptotic behavior of solutions of equation \eqref{1.1}
as $\lambda\to\infty$. This result generalizes the classical
Birkhoff result \cite{Bir08} (see also \cite{Nai69}) and completes
the result from \cite{BirLan23}.

In Section 3 we present the  proof of  Theorem \ref{th2.1}. We
also prove here (see Corollary \ref{cor2.1A}) that if the BC are
weakly regular, then the system of root functions of the adjoint
operator $L^*_{C,D}$ is complete and minimal too. Besides, we
present here some examples of irregular BC that are weakly
regular. In particular, we show that under very weak assumptions
the splitting BC are weakly regular.

In Section 4 we investigate the problem  \eqref{1.1}--\eqref{1.3}
with $B=B^*$ (Dirac type systems). We prove Corollary
\ref{Cor1.3Intro}. We also show that for dissipative
(accumulative) operators $L_{C,D}$ the first (the second)
condition in  \eqref{2.4Intro} yields completeness (see Corollary
\ref{cor2.2}). It is also proved here that in the case $Q=0$
conditions \eqref{2.4Intro} of (weak) regularity are necessary for
completeness.

In Section 5 we investigate boundary value problems  for $2\times
2$ Dirac type systems $(B=B^*)$ and present other sufficient
conditions of the completeness in the irregular case. In the proof
of the main result of the section, Theorem \ref{th3.1}, we
substantially exploit triangular transformation operators that
were constructed for general $n\times n$ Dirac type systems in
\cite{Mal99}.   For Dirac system we also find some necessary
conditions for completeness that show, in particular, the
sharpness of conditions \eqref{3.7AAIntro}  for the validity of
Proposition \ref{prop1.2_Intro} (see Proposition \ref{prop5.12}).

Finally, in Section 6 we investigate BVP \eqref{1.1}--\eqref{1.3}
for $n=2$ with $B = \diag(b^{-1}_1,b^{-1}_2)\not =B^*$    
and complete Theorem  \ref{th2.1} for this case.
Namely, in Theorem \ref{th7.1} we prove completeness and
minimality of the root functions of the BVP
\eqref{1.1},~\eqref{1.3} with  $C=
\begin{pmatrix}
1&-h_0\\
0&-h_1
\end{pmatrix}\
\text{and}\  D=\begin{pmatrix} 0&0\\
1&0
\end{pmatrix},\  h_0 h_1 \ne 0$,
when the BC \eqref{1.3} are not weakly regular. In this case
completeness of the adjoint operator $L_{C,D}(Q)^*$ depends on
$Q.$  However, we show in Corollary \ref{cor6.4}  that in the case
$B = \diag(b^{-1}_1,b^{-1}_2)\not =B^*$  \emph{weak $B$-regularity
of boundary conditions  \eqref{1.3} is equivalent to the
completeness of both operators $L_{C,D}(0)$  and $L_{C,D}(0)^*$
with $Q=0.$}

The main results of the paper have been announced  in
\cite{MalOri00, MalOri10}.

{\bf Notation.} We denote by $\left\langle
\cdot,\cdot\right\rangle$ the inner product in ${\Bbb C}^{n}.$
${\Bbb C}^{n\times n}$ stands for the set of $n\times n$ matrices
with complex entries; $I_n (\in {\Bbb C}^{n\times n})$ stands for
the unit matrix;  by $\kappa_+(A)$ ($\kappa_-(A)$) we denote  the
number of positive (negative) eigenvalues of the selfadjont matrix
$A$.

$o_n(1)$ stands for an $n\times n$ matrix function with entries of
the form $o(1)$;
 $[f(x)]$ stands for the function of the form $f(x)(1+o(1))$;

\section{Preliminaries}

\subsection{The asymptotic behavior
 of solutions to first-order systems}

Here we present a result  on the asymptotical growth of solutions
to first order systems of equations \eqref{1.1}. This result
slightly generalizes the corresponding result from
\cite[p.71-87]{BirLan23} on systems \eqref{1.1} where it was
obtained under a stronger  assumption $Q\in C^1[0,1]\otimes
C^{n\times n}.$ In turn, the latter result from \cite{BirLan23}
generalizes  the classical Birkhoff theorem on  $n$th-order
ordinary differential equation (see, for instance,~\cite{Bir08},
\cite{Nai69}). We present the proof for the sake of completeness.
Moreover, our exposition  slightly differs from that in
\cite{BirLan23} and is shorter.

To this end, we need the following lemma.
\begin{lemma}\label{LemBirk}
Let  $a_1, a_2,\dots, a_r$ are different complex numbers. Then the
complex plane can be divided into at most $r^2-r$ sectors $S_p$
with vertexes at the origin and  such that for any $p$ the numbers
$a_j$ can be renumbered so that  the following inequalities hold:
\begin{equation} \label{b2}
\Re (a_{j_1}\lambda) < \Re (a_{j_2}\lambda) <\dots <\Re
(a_{j_r}\lambda),  \qquad \lambda\in S_p.
\end{equation}
        \end{lemma}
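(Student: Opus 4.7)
The plan is to argue purely geometrically, by considering when the ordering of the real parts $\Re(a_j\lambda)$ can change as $\lambda$ varies over $\mathbb{C}$.

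First I would fix attention on pairs $j\ne k$. For each such pair the condition $\Re(a_j\lambda)=\Re(a_k\lambda)$ is equivalent to $\Re((a_j-a_k)\lambda)=0$, which is a line through the origin, perpendicular to $\overline{a_j-a_k}$. Since $a_j-a_k\ne 0$ (all $a_j$ are distinct), this is a genuine line for every unordered pair $\{j,k\}$. Thus we obtain at most $\binom{r}{2}=\tfrac{r(r-1)}{2}$ lines through the origin (some may coincide if differences $a_j-a_k$ and $a_{j'}-a_{k'}$ are collinear over $\mathbb{R}$, but this only decreases the count).

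Next I would use the elementary fact that $N$ pairwise distinct lines through the origin partition $\mathbb{C}$ into exactly $2N$ open sectors with vertex at the origin. Applied here this produces at most $2\cdot\tfrac{r(r-1)}{2}=r^2-r$ open sectors $S_p$. On each such sector $S_p$, and for each pair $\{j,k\}$, the function $\lambda\mapsto\Re((a_j-a_k)\lambda)$ is a nonzero real linear functional that does not vanish on $S_p$ (since $S_p$ lies strictly on one side of the line $L_{jk}$). Hence its sign is constant throughout $S_p$, so the strict inequality between $\Re(a_j\lambda)$ and $\Re(a_k\lambda)$ is the same for every $\lambda\in S_p$.

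Since this holds simultaneously for all pairs, the relation ``$\Re(a_j\lambda)<\Re(a_k\lambda)$'' defines a total strict order on $\{a_1,\dots,a_r\}$ that depends only on $S_p$ and not on $\lambda\in S_p$. Renumbering the indices according to this order yields a permutation $j_1,\ldots,j_r$ (depending on $p$) for which \eqref{b2} holds for every $\lambda\in S_p$, as required. There is no real obstacle here; the only subtle point is the correct book-keeping to confirm that the sector count never exceeds $r^2-r$ even in the degenerate case where several of the lines $L_{jk}$ coincide.
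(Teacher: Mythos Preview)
Your argument is correct and follows essentially the same approach as the paper: define the lines $l_{jk}=\{\lambda:\Re(a_j\lambda)=\Re(a_k\lambda)\}$ through the origin, note that the at most $\binom{r}{2}$ such lines cut $\mathbb{C}$ into at most $r^2-r$ open sectors, and observe that on each sector the pairwise strict inequalities between the $\Re(a_j\lambda)$ are constant (the paper phrases this last step via continuity rather than via constancy of the sign of the linear functional, but it is the same point).
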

\begin{proof}[Proof]
Let $l_{jk}$ be the set of $z$  satisfying $\Re(a_j z) = \Re
(a_kz)$. Then $l_{jk}$ is the line on the complex plane passing
through the origin. All such lines divide the complex plane into
at most $r^2-r$ sectors. Assume that $a_j$ are ordered in a such
way that inequalities~\eqref{b2} hold for a certain $\lambda_0$
lying inside a sector. In this case, since $\Re (a_{j_k}\lambda)
\ne \Re (a_{j_l}\lambda)$ for any $\lambda$ inside the sector and
all the functions $\Re (a_j\lambda)$, \ $j\in\{1,2,\dots,r\}$, are
continuous, it follows that the inequalities~\eqref{b2} are valid
for every $\lambda$ from the chosen  sector as well.
\end{proof}

Clearly, each of the sectors $S_p$ is of the form $S_p = \{
z:\varphi_{1p}< \arg z<\varphi_{2p}\}$. Fix $p$ and denote by $S$
the sector strictly embedded into the latter, i.e.,
\begin{equation} \label{bS}
\begin{split}
S&:=\{z: \varphi_{1p} + \varepsilon_1<\arg z<\varphi_{2p} -
\varepsilon_2\},
\quad  \text{where}\quad  \varepsilon_1, \varepsilon_2>0; \\
 S_R&:=\{z\in S: |z|>R\}.
  \end{split}
\end{equation}
%
%
     \begin{proposition}\label{BirkSys}
Assume that $B=\diag(b_1^{-1}I_{n_1}, b_2^{-1}I_{n_2}, \dots,
b_r^{-1}I_{n_r})$ is a nonsingular  diagonal $n\times n$ matrix
with  $b_j\ne b_k$ for $j\ne k$, and $Q(x)=(q_{jk}(x))_{j,k=1}^r$
where $q_{jk}(\cdot)\in L^1[0,1]\otimes \mathbb{C}^{n_j\times
n_k}$ and  $q_{jj}(\cdot)\equiv 0,\ \ j\in\{1,2,\dots,r\}$.
Further, let $S$ be the sector of the form \eqref{bS}.
Then the numbers $\{ib_j\}_1^r$ can be renumbered with respect to
the sector $S$ in accordance with~\eqref{b2}, i.e.
\begin{equation} \label{b2sect3}
\Re (ib_{j_1}\lambda) <\Re (ib_{j_2}\lambda) <\dots <\Re
(ib_{j_r}\lambda),\qquad \lambda\in S.
\end{equation}
Moreover, for a sufficiently large $R$, equation~\eqref{1.1} has
the fundamental system of  matrix solutions
   \begin{equation}  \label{b3}
Y_k(x;\lambda)=
 \begin{pmatrix} y_{1k}(x;\lambda) \\
 y_{2k}(x;\lambda) \\
 \dots \\
 y_{rk}(x;\lambda)
\end{pmatrix},
\qquad y_{jk}(\cdot;\lambda):[0,1]\to \mathbb{C}^{n_j\times n_k},
\quad k\in \{1,2,\dots,r\},
\end{equation}
which is analytic with respect to  $\lambda\in S_R$ and has the
asymptotic behavior (uniformly in $x$)
\begin{equation} \label{b4}
\begin{split}
y_{kk}(x;\lambda)&=(I_{n_k} + o(1)) e^{ib_k\lambda x},\qquad \lambda \in S_R, \\
y_{jk}(x;\lambda)&=o(1)e^{ib_k\lambda x},\qquad \lambda \in
S_R,\quad \text{for} \quad j\ne k.
\end{split}
\end{equation}
        \end{proposition}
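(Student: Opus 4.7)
The plan is to follow the classical Birkhoff approach: convert \eqref{1.1} into a system of Volterra-type integral equations whose kernels stay uniformly bounded on the large-$\lambda$ sector $S_R$, and then apply a successive-approximation argument that simultaneously yields the asymptotics \eqref{b4} and analyticity in $\lambda$.

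First I would apply Lemma \ref{LemBirk} to the numbers $\{ib_j\}_{j=1}^r$ to secure the ordering \eqref{b2sect3} on $S$, and relabel so that $\Re(ib_1\lambda)<\cdots<\Re(ib_r\lambda)$ for every $\lambda\in S$. For each fixed $k\in\{1,\ldots,r\}$ I would construct the column $Y_k$ via the ansatz $y_{jk}(x;\lambda)=e^{ib_k\lambda x}u_{jk}(x;\lambda)$, which converts the block form of \eqref{1.1} into the first-order linear ODE
\[
u'_{jk}(x;\lambda)=i\lambda(b_j-b_k)u_{jk}(x;\lambda)-ib_j\sum_{l\neq j}q_{jl}(x)u_{lk}(x;\lambda),\qquad j=1,\ldots,r,
\]
and then integrate by variation of constants with carefully chosen lower limits to obtain the integral system
\[
u_{jk}(x;\lambda)=\delta_{jk}I_{n_k}-ib_j\int_{\alpha_j}^{x}e^{i\lambda(b_j-b_k)(x-\xi)}\sum_{l\neq j}q_{jl}(\xi)u_{lk}(\xi;\lambda)\,d\xi,
\]
with $\alpha_j=0$ for $j\le k$ and $\alpha_j=1$ for $j>k$. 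With this choice, the ordering on $S$ forces $\Re[i\lambda(b_j-b_k)(x-\xi)]\le 0$ for every $\xi$ between $\alpha_j$ and $x$, so every kernel is bounded by $1$ uniformly in $(x,\lambda)\in[0,1]\times S$.

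Next I would solve the integral system by Banach iteration in $C([0,1];\mathbb{C}^{n\times n_k})$. Because $q_{jj}\equiv 0$, every iterate of the integral operator $\mathcal{K}_\lambda$ defined by the right-hand side involves only off-diagonal kernels $e^{i\lambda(b_j-b_l)(x-\xi)}$ with $j\neq l$, which, paired against $q_{jl}\in L^1[0,1]$, produce an $o(1)$ contribution as $|\lambda|\to\infty$ in $S$ by the Riemann--Lebesgue lemma. Hence a finite iterate $\mathcal{K}_\lambda^{N}$ becomes a strict contraction for $\lambda\in S_R$ with $R$ sufficiently large, the Neumann series converges uniformly on compact subsets of $S_R$ to the unique bounded fixed point $U_k=(u_{jk})_{j=1}^r$, and analyticity of $U_k(x;\lambda)$ in $\lambda$ is inherited from the analyticity of the successive approximations together with the uniformity of the convergence. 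Inverting the ansatz recovers $Y_k(x;\lambda)$ in the form \eqref{b3}, and the asymptotics \eqref{b4} can be read off from the zeroth approximation $u_{jk}^{(0)}=\delta_{jk}I_{n_k}$; linear independence of $\{Y_k\}_{k=1}^r$ for large $|\lambda|$ follows from the linear independence of the leading exponentials $e^{ib_k\lambda x}$.

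The main obstacle I expect is the mixed Volterra/anti-Volterra structure of the integral system: rows with $j\le k$ are integrated forward from $0$, whereas rows with $j>k$ are integrated backward from $1$, so the classical Volterra $1/N!$ bound does not by itself close the Neumann series and a naive contraction with the sup-norm fails. The crucial replacement is the hypothesis $q_{jj}\equiv 0$ combined with the Riemann--Lebesgue effect in the off-diagonal kernels, which together force every iterated kernel to be genuinely oscillatory and hence to decay as $|\lambda|\to\infty$ in $S$, making a finite iterate a contraction and thereby underwriting both existence and the claimed asymptotics.
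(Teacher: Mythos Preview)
Your approach is essentially the paper's classical Birkhoff argument: the same integral system with lower limits $0$ for $j\le k$ and $1$ for $j>k$, the same normalization $y_{jk}=e^{ib_k\lambda x}u_{jk}$, and the same successive-approximation scheme. The only structural difference is that the paper eliminates the non-decaying diagonal row $j=k$ by substituting it into the remaining equations (obtaining a reduced system whose integral operators have norm $o(1)$), whereas you keep the full system and argue that a finite iterate contracts; these are equivalent, and indeed $\mathcal K_\lambda^2$ already has norm $o(1)$ for exactly the reason you identify.

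Two small inaccuracies to clean up. First, the kernel in row $j$ is $e^{i\lambda(b_j-b_k)(x-\xi)}$ with $k$ fixed, not $e^{i\lambda(b_j-b_l)(x-\xi)}$; in particular the row $j=k$ has kernel identically $1$, which is why one substitution or iteration is needed (the vanishing of $q_{jj}$ does not by itself make the single-step kernel oscillatory). Second, the decay mechanism on the closed subsector $S$ is not Riemann--Lebesgue oscillation but genuine exponential decay: for $j\ne k$ the strict inclusion $S\subset S_p$ gives a uniform bound $\Re\bigl(i(b_j-b_k)\lambda\bigr)\le -c_{jk}|\lambda|$, so that $\int_{\alpha_j}^{x}|e^{i\lambda(b_j-b_k)(x-\xi)}|\,|q_{jl}(\xi)|\,d\xi\to 0$ by dominated convergence against $q_{jl}\in L^1$. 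With these corrections your sketch goes through and matches the paper's proof.
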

\begin{proof}[Proof]
The first statement is immediate from Lemma \ref{LemBirk}.
 Without loss of generality
we assume that $b_{j_k}=b_k,\ k\in \{1,\ldots, r\}.$  Besides for
simplicity, we restrict ourselves to the case of the matrix  $B$
with simple spectrum, i.e., assume that $n_k=1$ for $k\in
\{1,\ldots, r\}$. In this case, $r=n$, and $Y_k(x;\lambda)$ is the
vector column with the components $y_{jk}(x;\lambda),\ \ j\in
\{1,2,\dots,n\}$.

Denote $\widetilde{q}_{jl}(t)=-i b_j q_{jl}(t)$.
It is easy to check that, for every fixed $k\in
\{1,2,\dots,r\}$, a solution of the system of
integral equations
\begin{equation}  \label{b5}
\begin{cases}
 \begin{split}
 y_{jk}(x;\lambda)=\int_0^x e^{ib_j\lambda(x-t)}
  \sum_{l=1}^r \widetilde{q}_{jl}(t) y_{lk}(t;\lambda)\, dt
 \end{split}&\text{ \ for } j<k,
 \\
 \begin{split}
 y_{jk}(x;\lambda)=e^{ib_j\lambda x}+
 \int_0^x e^{ib_j\lambda(x-t)}
  \sum_{l=1}^r \widetilde{q}_{jl}(t) y_{lk}(t;\lambda)\, dt
 \end{split}&\text{ \ for } j=k,
 \\
 \begin{split}
 y_{jk}(x;\lambda)=-\int_x^1 e^{ib_j\lambda(x-t)} \sum_{l=1}^r \widetilde{q}_{jl}(t)
 y_{lk}(t;\lambda)\,dt
 \end{split} &\text{ \ for } j>k,
\end{cases}
\end{equation}
is the solution to the system~\eqref{1.1} as well.

Let us verify that system~\eqref{b5} has a unique solution for
sufficiently large absolute values of $\lambda\in S$, and this
solution satisfies conditions~\eqref{b4}. Introduce new functions
$z_{jk}(x;\lambda)$ by setting
\begin{equation} \label{b6}
z_{jk}(x;\lambda):=e^{-ib_k\lambda x} y_{jk}(x;\lambda),\quad
j,k\in \{1,2,\dots,r\}.
   \end{equation}
Then the $k$-th equation in the system~\eqref{b5}
yields
\begin{equation}  \label{b7}
z_{kk}(x;\lambda)=1+\int_0^x\sum_{j=1}^r
\widetilde{q}_{kj}(t)z_{jk}(t;\lambda)\, dt.
\end{equation}
By substituting expressions~\eqref{b6} and~\eqref{b7} into the
system~\eqref{b5} we obtain
 \begin{equation} \label{b8}
 \begin{cases}
 \begin{split}
 z_{jk}(x;\lambda)&=\int_0^x \widetilde{q}_{jk}(t)e^{i(b_j - b_k)\lambda(x-t)}\,dt
 +{\sum_{1\le l\le r}}' \int_0^x e^{i(b_j- b_k)\lambda(x-t)} \times \\
 &\left(\times \widetilde{q}_{jl}(t) z_{lk}(t;\lambda) +
 \widetilde{q}_{jk}(t)\int_0^t \widetilde{q}_{kl}(\tau) z_{lk}(\tau;\lambda)\,d\tau\right)\,dt,
 \end{split}& j<k
 \\
 \begin{split}
 z_{jk}(x;\lambda)&=-\int_x^1 \widetilde{q}_{jk}(t)e^{i(b_j - b_k)\lambda(x-t)}\,dt-
 {\sum_{1\le l\le r}}' \int_x^1 e^{i(b_j- b_k)\lambda(x-t)}\times \\
 &\left(\times \widetilde{q}_{jl}(t)z_{lk}(t;\lambda) +
  \widetilde{q}_{jk}(t)\int_0^t \widetilde{q}_{kl}(\tau) z_{lk}(\tau;\lambda)\,d\tau\right)\,dt,
 \end{split}& j>k
 \end{cases}
 \end{equation}
where the prime over a sum means that the summation is taken over
$l\ne k$.

We put
\begin{equation}  \label{b9}
u_{jk}(x;\lambda)=
\begin{cases}
 \begin{split}
 &\int_0^x e^{i(b_j - b_k)\lambda(x-t)}\widetilde{q}_{jk}(t)\,dt,& j<k, \\
 -&\int_x^1 e^{i(b_j- b_k)\lambda(x-t)}\widetilde{q}_{jk}(t)\,dt,& j>k.
 \end{split}
\end{cases}
\end{equation}

Further, let
\begin{multline}  \label{b10}
A_{jkl}(\lambda)f(x):= \\
 =\begin{cases}
  \begin{split}
 &\int_0^x e^{i(b_j - b_k)\lambda(x-t)}\left(\widetilde{q}_{jl}(t)f(t)+
 \widetilde{q}_{jk}(t)\int_0^t \widetilde{q}_{kl}(\tau)f(\tau)\,d\tau\right)\,dt,& j<k
 \\
 -&\int_x^1e^{i(b_j - b_k)\lambda(x-t)}\left(\widetilde{q}_{jl}(t)f(t)+
 \widetilde{q}_{jk}(t)\int_0^t \widetilde{q}_{kl}(\tau)f(\tau)\,d\tau\right)\,dt,& j>k.
  \end{split}
\end{cases}
       \end{multline}
Clearly,  $A_{jkl}(\cdot): C[0,1]\to C[0,1]$ forms  the family of
continuous operators depending on $\lambda$ analytically.
Moreover, due to inequalities  \eqref{b2sect3},
$\|A_{jkl}(\lambda)\|=o(1)$ for $\lambda \in S,\ \lambda \to
\infty$.

The system~\eqref{b8} can be rewritten in the form
 \begin{equation}  \label{b11}
z_{jk}(x;\lambda)=u_{jk}(x;\lambda) + {\sum}'_{1\le l\le n}
A_{jkl}(\lambda) z_{lk}(t;\lambda),\quad j\ne k.
\end{equation}
Applying the method of successive approximations in the space
$C[0,1]\otimes\mathbb{C}^r$ to system~\eqref{b11} and using the
relation $\|A_{jkl}(\lambda)\|=o(1)$ we conclude that, for
sufficiently large $|\lambda|$,  $\lambda \in S$, the
system~\eqref{b11} has unique solution. Furthermore, the functions
$z_{jk}(x;\lambda)$ are analytic with respect to $\lambda\in S$,
and the following relations hold uniformly in $x\in [0,1]$
    \begin{equation}  \label{b12}
z_{jk}(x;\lambda)=u_{jk}(x;\lambda)(1+o(1)), \quad \lambda \in S,\
\lambda \to \infty, \quad j\in \{1,2,\dots,n\},\ \ j\ne k
    \end{equation}
The proof of this fact  is similar to that of ~\cite[Lemma
4.4.1]{Nai69}. Taking account of the relations
$u_{jk}(x;\lambda)=o(1)$ as  $\lambda\to\infty$, ~\eqref{b12} can
be rewritten as
\begin{equation}  \label{b13}
z_{jk}(x;\lambda)=o(1), \qquad \text {for} \quad \lambda \in
S,\quad  \lambda \to \infty, \qquad j\ne k.
\end{equation}
By substituting~\eqref{b13} into~\eqref{b7} we obtain
\begin{equation}  \label{b14}
z_{kk}(x;\lambda)= 1 + o(1), \qquad \lambda \in S,\quad  \lambda
\to \infty.
   \end{equation}
Next by substituting both~\eqref{b13} and~\eqref{b14}
in~\eqref{b6} we arrive at~\eqref{b4}.

It remains to note that, due to~\eqref{b4} for $x=0$, we have
\begin{equation}  \label{b15}
Y(x;\lambda)=(y_{jk}(x;\lambda))_{j,k=1}^n = I_n + o_n(1).
\end{equation}
Hence the system of solutions $Y_k(x;\lambda)$ is linearly
independent for  $\lambda \in S_R$ with sufficiently large $R$.
\end{proof}
      \begin{remark} \label{rem1}
Replacing  the condition $q_{jl}\in L^1(0,1)$ by the stronger
condition $q_{jl}\in L^\infty(0,1)$, we arrive at the  stronger
estimate
\begin{equation} \label{b16}
y_{jk}(x;\lambda)=\left(\delta_j^k+O\left(\tfrac1{|\lambda|}\right)\right)
e^{ib_k\lambda x}, \qquad \lambda \in S,\ \lambda \to \infty.
\end{equation}
However, the estimate~\eqref{b16} is false in general if only
$Q\in L^1(0,1)\otimes \Bbb C^{n\times n}.$ For instance, consider
the $2\times 2$ system \eqref{1.1} with $B= \diag(i,\ -i)$
  \begin{equation}
\begin{cases}
y_1'(x;\lambda)=\lambda y_1(x;\lambda) \\
y_2'(x;\lambda)=-\lambda
y_2(x;\lambda)+\frac{y_1(x;\lambda)}{\sqrt{1-x}}.
\end{cases}
  \end{equation}
Here  $q_{12}\equiv 0,$ $q_{21}=\tfrac 1{\sqrt{1-x}}\in L^1(0,1)$.
Using the estimate $\int_0^1e^{-2\lambda\tau^2}\, d\tau \sim
\frac1{\sqrt{\lambda}}$ it is not difficult to show that that the
estimate~\eqref{b16} is false.
          \end{remark}
%

\subsection{The minimality property }
Apparently the following statement is well known for experts. We
present it with the proof for completeness.
   \begin{lemma}\label{Lem_minimality}
let $T\in\frak S_{\infty}(\gH)$ and $\ker T=\{0\}$. Then the
system of EAF of the operator $T$ is  minimal.
   \end{lemma}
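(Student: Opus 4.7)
The plan is to construct an explicit biorthogonal dual system for the EAF of $T$; in Hilbert space the existence of such a dual family is equivalent to minimality of the original system.

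First I would invoke the Riesz--Schauder theorem for the compact operator $T$: every nonzero point of $\sigma(T)$ is an isolated eigenvalue $\lambda_k$ of finite algebraic multiplicity, and the associated root subspace $\mathcal{R}_k:=\ker(T-\lambda_k I)^{p_k}$ is finite-dimensional for an appropriate integer $p_k$. The hypothesis $\ker T=\{0\}$ propagates to $\ker T^n=\{0\}$ for every $n\ge 1$, so no root vector is attached to the eigenvalue $0$, and the EAF system of $T$ is precisely the disjoint union of Jordan bases of the spaces $\mathcal{R}_k$.

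Next I would introduce the Riesz projectors $P_k=\frac{1}{2\pi i}\oint_{\gamma_k}(\zeta I-T)^{-1}\,d\zeta$, where $\gamma_k$ is a small positively oriented contour isolating $\lambda_k$ from the rest of $\sigma(T)$. The holomorphic functional calculus yields $P_jP_k=\delta_{jk}P_k$ and $\operatorname{ran} P_k=\mathcal{R}_k$. Taking adjoints, $P_k^*$ is the Riesz projector of $T^*$ attached to $\overline{\lambda_k}$, its range is the dual root subspace $\mathcal{R}'_k:=\ker(T^*-\overline{\lambda_k}I)^{p_k}$, and $P_j^*P_k^*=\delta_{jk}P_k^*$. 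From the identity $\langle P_jx,P_k^*y\rangle=\langle P_kP_jx,y\rangle$ one reads off at once the cross-orthogonality $\mathcal{R}_j\perp\mathcal{R}'_k$ whenever $j\ne k$.

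The only delicate step, and the one I expect to be the main (mild) obstacle, is verifying non-degeneracy of the inner-product pairing $\mathcal{R}_k\times\mathcal{R}'_k\to\mathbb{C}$: if some $g\in\mathcal{R}'_k$ satisfied $\langle e,g\rangle=0$ for every $e\in\mathcal{R}_k=P_k\mathfrak{H}$, then $P_k^*g=0$, while $g\in\operatorname{ran} P_k^*$ forces $P_k^*g=g$, whence $g=0$. This finite-dimensional non-degenerate pairing therefore equips any Jordan basis $\{e_{k,1},\dots,e_{k,m_k}\}$ of $\mathcal{R}_k$ with a unique dual family $\{f_{k,1},\dots,f_{k,m_k}\}\subset\mathcal{R}'_k$ satisfying $\langle e_{k,j},f_{k,l}\rangle=\delta_{jl}$. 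Combined with the cross-orthogonality $\mathcal{R}_j\perp\mathcal{R}'_k$ for $j\ne k$, the resulting collection $\{f_{k,j}\}_{k,j}$ is biorthogonal to the EAF system $\{e_{k,j}\}_{k,j}$, which gives minimality.
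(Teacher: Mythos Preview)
Your argument is correct, and in fact your handling of the non-degeneracy step is cleaner than the paper's. Both proofs reduce minimality to constructing a biorthogonal family inside the root subspaces $\mathcal R'_k$ of $T^*$, and both observe the cross-orthogonality $\mathcal R_j\perp\mathcal R'_k$ for $j\ne k$. The difference lies in how non-degeneracy of the pairing $\mathcal R_k\times\mathcal R'_k\to\mathbb C$ is established. You invoke the Riesz projectors and argue that $g\in\operatorname{ran}P_k^*$ orthogonal to $\operatorname{ran}P_k$ forces $P_k^*g=0=g$; this is short, self-contained, and uses only the idempotence of the spectral projection. The paper instead assumes a nonzero $f\in\mathcal R'_j$ orthogonal to all of $\mathfrak H_1:=\overline{\operatorname{Span}}\bigcup_k\mathcal R_k$, observes that $f$ then lies in the $T^*$-invariant subspace $\mathfrak H_2:=\mathfrak H_1^\perp$, and quotes a lemma of Gohberg--Krein to the effect that the compression $P_2TP_2$ (hence its adjoint) is a Volterra operator; this contradicts the existence of a nonzero eigenvector of $T^*\!\upharpoonright\!\mathfrak H_2$ at $\overline{\lambda_j}\ne0$. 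Your route avoids the external reference and the passage through $\mathfrak H_2$; the paper's route, on the other hand, makes explicit the structural fact that $T$ restricted to the complement of its root span has no eigenvalues, which is of independent interest. Either way the conclusion is the same.
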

   \begin{proof}[Proof]
Let $\{\lambda_j\}^{\infty}_1$ be a system of eigenvalues of $T$
arranged in descending order of their modulus:
    \begin{equation}
|\lambda_1|\ge|\lambda_2|\ge \ldots \ge |\lambda_k|\ge|\lambda_{k
+ 1}|\ge\ldots >0.
    \end{equation}
Denote by $\frak N_j(T):=\frak N_{\lambda_j}(T)$ the corresponding
root subspaces of $T$. It is easily  seen that
      \begin{equation}\label{2.25A}
\frak N_j(T)\perp\frak N_k(T^*)\quad\text{for}\quad  j\not = k.
      \end{equation}
Moreover, by Fredholm theorem,  $\dim\frak N_j(T)=\dim\frak
N_j(T^*),\  j\in{\Bbb N},$ since $\lambda_j\not = 0.$ Further, let
$\{e_{jp}\}^{n_j}_{p=1}$ and $\{f_{jk}\}^{n_j}_{k=1}$ be the
basses in $\frak N_j(T)$ and $\frak N_j(T^*)$, respectively. Then
the "Gram matrix"
    \begin{equation*}
G_j:=(\langle e_{jp},f_{jk}\rangle)^{n_j}_{p,k=1}
    \end{equation*}
is non-singular. Assuming the contrary we find a non-zero vector
$f=\sum^{n_j}_{k=1}a_kf_{jk}\in\frak N_j(T^*)$ which is orthogonal
to $\frak N_j(T)$. Thus, due to \eqref{2.25A}
$$
f\perp\frak H_1 := \Span\{\frak N_k(T):\ k\in\Bbb N\},
$$
i.e. $f\in\frak H_2 :=\frak H^{\perp}_1$. Let $P_2$ be an
orthogonal projection on the subspace $\frak H_2$. By \cite[Lemma
1.4.2]{GohKre65} the operator $T_2=P_2 T P_2$ is volterra
operator, hence so is the adjoint operator $T^*_2=P_2 T^* P_2$.
Since $f \in \frak N_j(T^*),$ we can find  $k< n_j$ such that
$u:=(T^*-\overline{\lambda}_j)^k f\not = 0$  and $T^*u =
\overline{\lambda}_ju$. Since $\frak H_2$ is an invariant subspace
for $T_2^*$,  $u \in \frak H_2$ and  $T^*_2u = T^*u =
\overline{\lambda}_j u$ where  ${\lambda}_j \ne 0$. This
contradiction shows that  the matrix $G_j$ is non-singular.

Thus,  the basis $\{f_{jk}\}^{n_j}_{k=1}$ in  $\frak N_j(T^*)$ can
be chosen to be  biorthogonal  to the basis
$\{e_{jp}\}^{n_j}_{p=1}$, i.e. to satisfy $\langle e_{jp},
f_{jk}\rangle=\delta_{pk}, \  p,k\in \{1, \ldots, {n_j}\}.$
Consider the union of both systems. Then using the latter
identities and \eqref{2.25A} we obtain two biorthogonal systems.
Thus, the system $\cup_{j=1}^{\infty}\{e_{jp}\}^{n_j}_{p=1}$  is
minimal.
      \end{proof}

\section{Completeness of the root functions of BVP for first order-systems}

\subsection{Proof of the main result}

Here we present the proof of Theorem 1.1. On the second step  we
use the idea of reduction of the proof  of completeness of  the
BVP~\eqref{1.1}, \eqref{1.3} to the investigation of that for
solutions to the (incomplete) Cauchy problem. The idea of such
reduction goes back to the paper by A.A.~Shkalikov~\cite{Shk76}
where it was applied to BVP for $n$th order differential
equations.

\begin{proof}[Proof of Theorem 1.1]
(i)  Suppose that $\Phi(x;\lambda)$ is a fundamental $n\times n$
matrix solution of equation~\eqref{1.1} corresponding to the
initial condition
   \begin{equation}\label{2.7}
\Phi(0;\lambda)=I_n.
    \end{equation}
Further, denote by $\Phi_j(x;\lambda)$ the $j$th vector column of
the matrix $\Phi(x;\lambda)$, i.e.,
    \begin{equation}\label{2.8}
\Phi(x;\lambda)=(\Phi_1,\dots,\Phi_n),\quad
\Phi_j(x;\lambda)=\col(\varphi_{1j},\dots,\varphi_{nj}).
    \end{equation}
It is clear that the general solution of equation~\eqref{1.1} is
of the form
    \begin{equation}\label{2.9}
U(x;\lambda)=\sum_{j=1}^n\alpha_j(\lambda)\Phi_j(x;\lambda),\qquad
\alpha_j(\lambda)\in\mathbb{C}.
    \end{equation}
By substituting~\eqref{2.9} into~\eqref{1.3} we derive to the
equation for eigenvalues and eigenfunctions of
problem~\eqref{1.1}, \eqref{1.3}:
\begin{multline}\label{2.10}
C\sum_{j=1}^n\alpha_j(\lambda)\Phi_j(0;\lambda)+
D\sum_{j=1}^n\alpha_j(\lambda)\Phi_j(1;\lambda)=\\=
(C\Phi(0;\lambda)+D\Phi(1;\lambda))
  \begin{pmatrix}
    \alpha_1 \\
    \ldots\\
    \alpha_n \\
  \end{pmatrix}
=(C+D\Phi(1;\lambda))
  \begin{pmatrix}
    \alpha_1 \\
    \ldots\\
    \alpha_n \\
  \end{pmatrix}=0.
\end{multline}
The equation~\eqref{2.10} has nontrivial solution if and only if
the matrix $A_\Phi(\lambda):=(C+D\Phi(1;\lambda))$ is singular,
i.e., if
    \begin{equation}\label{2.11}
\Delta_\Phi(\lambda):=\det A_\Phi(\lambda):=\det
(C+D\Phi(1;\lambda))=0.
    \end{equation}
It follows that the spectrum $\sigma(L_{C,D})$ of
problem~\eqref{1.1}, \eqref{1.3} coincides with the roots of the
characteristic determinant $\Delta_\Phi(\cdot)$. In what follows
we will show that the assumption (b) of the theorem yields the
nondegeneracy of the $\Delta_\Phi(\lambda)$, i.e., the relation
$\Delta_\Phi(\lambda)\not\equiv 0$. Therefore, the spectrum
$\sigma(L_{C,D})$ of problem~\eqref{1.1}, \eqref{1.3} is discrete,
i.e., $\sigma(L_{C,D})=:\{\lambda_k\}_1^\infty$.

Denote by
$\widetilde{A}_\Phi(\lambda)=(\Delta_{jk}(\lambda))_{j,k=1}^n$ the
matrix associated to $A_\Phi(\lambda)$, and introduce the vector
functions
\begin{equation}\label{2.13}
U_j(x;\lambda):=\sum_{k = 1}^n
\Delta_{jk}(\lambda)\Phi_k(x;\lambda),\qquad j\in\{1,2,\dots,n\}.
\end{equation}
Here two cases are possible: $U_j(x;\lambda_k)\ne 0$ and
$U_j(x;\lambda_k)=0$.  If $U_j(x;\lambda_k)\ne 0$ then
relations~\eqref{2.10}, \eqref{2.11} and~\eqref{2.13} together
imply that $U_j(x;\lambda_k)$ is an eigenfunction of
problem~\eqref{1.1}, \eqref{1.3} corresponding to the eigenvalue
$\lambda_k$.

Moreover, if $\lambda_k$ is an $m_k$-multiple ($m_k>1$) zero of
the function $\Delta(\lambda):=\Delta_\Phi(\lambda)$, then the
vector functions
\begin{equation}\label{2.14}
\frac 1{p!} \left. D_\lambda^p U_j(x;\lambda)
\right|_{\lambda=\lambda_k}:= \frac 1{p!} \left.
\frac{\partial^p}{\partial\lambda^p} U_j(x;\lambda)
\right|_{\lambda=\lambda_k}, \qquad p\in\{0,1,\dots,m_k-1\},
\end{equation}
form a chain of an eigenfunction and associated functions of
problem~\eqref{1.1}, \eqref{1.3} corresponding to the eigenvalue
$\lambda_k$. Indeed, we have
\begin{eqnarray}\label{2.15}
\frac 1{p!} \left. L D_\lambda^p U_j(x;\lambda)
\right|_{\lambda=\lambda_k}= \frac 1{p!} \left. D_\lambda^p
LU_j(x;\lambda) \right|_{\lambda=\lambda_k}=
  \frac 1{p!} \left. D_\lambda^p (\lambda U_j(x;\lambda)) \right|_{\lambda=\lambda_k} \nonumber  \\
 = \frac 1{p!} \left. \lambda_k D_\lambda^p U_j(x;\lambda) \right|_{\lambda=\lambda_k}+
  \frac 1{(p-1)!} \left. D_\lambda^{p-1} U_j(x;\lambda) \right|_{\lambda=\lambda_k}.
\end{eqnarray}
Besides, both~\eqref{2.10} and~\eqref{2.13} yield that $\left.
D_\lambda^p U_j(x;\lambda) \right|_{\lambda=\lambda_k}$ satisfies
the boundary condition~\eqref{1.3}.  For instance, in the case
$p=1$, this is implied by the relation
\begin{equation}\label{2.16}
(C+D\Phi(1;\lambda_k))
  \begin{pmatrix}
    \Delta_{11}'(\lambda_k) \\
    \ldots\\
    \Delta_{1n}'(\lambda_k)
  \end{pmatrix}
  + (C+D\Phi(1;\lambda_k))'
  \begin{pmatrix}
    \Delta_{11}(\lambda_k) \\
    \ldots\\
    \Delta_{1n}(\lambda_k)
  \end{pmatrix}=0,
\end{equation}
which holds for $\lambda=\lambda_k$ if
$\Delta(\lambda_k)=\Delta'(\lambda_k)=0$.

Now let $U_j(x;\lambda_k)=0$. As above, we consider the sequence
of the vector functions $\displaystyle \left. D_\lambda^p
U_j(x;\lambda) \right|_{\lambda=\lambda_k},$\ \
$p\in\{0,1,\dots,m_k-1\}$. Let $s$ stand for the minimal number
$p$ such that $\left. L D_\lambda^p U_j(x;\lambda)
\right|_{\lambda=\lambda_k}\ne 0$, i.e.,
\begin{equation}
\begin{cases}
\left. D_\lambda^p U_j(x;\lambda) \right|_{\lambda=\lambda_k}=0
\quad
\text{ for } p\in\{0,1,\dots,s-1\}; \\
 \left. D_\lambda^s U_j(x;\lambda) \right|_{\lambda=\lambda_k}\ne 0.
\end{cases}
\end{equation}

In this case, we obtain:
   \begin{multline}
 \frac 1{s!} \left. L D_\lambda^s U_j(x;\lambda)
\right|_{\lambda=\lambda_k}=
 \frac 1{s!} \left. D_\lambda^s L U_j(x;\lambda) \right|_{\lambda=\lambda_k}=
  \frac 1{s!} \left. D_\lambda^s (\lambda_k U_j(x;\lambda)) \right|_{\lambda=\lambda_k}   \\
 \!=\!\frac 1{s!} \left. \lambda_k D_\lambda^s U_j(x;\lambda) \right|_{\lambda=\lambda_k}+
 \frac 1{(s-1)!} \left. D_\lambda^{s-1} U_j(x;\lambda)
 \right|_{\lambda=\lambda_k}\!=\!
 \frac 1{s!} \left. \lambda_k D_\lambda^s U_j(x;\lambda) \right|_{\lambda=\lambda_k},
\end{multline}
since $\left. D_\lambda^{s-1} U_j(x;\lambda)
\right|_{\lambda=\lambda_k}=0$. Hence for $s<m_k$ the sequence of
the vector functions $\left. D_\lambda^s U_j(x;\lambda)
\right|_{\lambda=\lambda_k},\dots, \left. D_\lambda^{m_k}
U_j(x;\lambda) \right|_{\lambda=\lambda_k}$ forms a chain of an
eigenfunction and associated functions of problem~\eqref{1.1},
\eqref{1.3} corresponding to the eigenvalue $\lambda_k$. In this
case, the fulfilment of the boundary conditions is verified as
above.

Thus,  the system of functions $\{\left. D_\lambda^p
U_j(x;\lambda) \right|_{\lambda=\lambda_k}\}_{p=0}^{m_k-1}$ is
either zero, or it span the root subspace of the operator
$L_{C,D}$  corresponding to  $\lambda_k.$

(ii) In this step we reduce the problem \eqref{1.1}--\eqref{1.3}
to similar problem with a potential matrix
$Q(\cdot)=\bigl(q_{jk}(\cdot)\bigr)^r_{j,k=1}$ having  zero
diagonal, i.e. $q_{jj}(\cdot)=0,\  j\in\{1,\ldots, r\}$. It will
allow us to apply Proposition \ref{th2.1}.

To this end we denote by $W(\cdot)$ the fundamental $n\times n$
matrix solution of the Cauchy problem
    \begin{equation}\label{3.1B}
iB W'(x) = Q_1(x)W(x), \qquad  W(0)=I_n.
    \end{equation}
where the $n\times n$ matrix function $Q_1(\cdot)$ is quasidiagonal with blocks
$q_{jj}(\cdot)$,
    \begin{equation}\label{3.2B}
Q_1(x) = \diag\bigl(q_{11}(x),\ldots, q_{rr}(x)\bigr).
    \end{equation}
Since $BQ_1(x)=Q_1(x)B$ for any $x\in[0,1]$, the matrix functions
$W_1(\cdot)= B W(\cdot)$ and $W_2(\cdot) = W(\cdot)B$ satisfy
equation \eqref{3.1B} and common initial conditions
   \begin{equation}
iB W'_j(x)= Q_1(x)W_j(x), \qquad   W_j(0)=B, \quad j\in \{1,2\}.
  \end{equation}
According to the Cauchy uniqueness theorem $W_1(x)=W_2(x)$ for
$x\in[0,1]$, i.e.
   \begin{equation}\label{3.3B}
W(x)B - B W(x) = 0, \qquad  x\in[0,1].
   \end{equation}

Letting $\widetilde L=(I\otimes W)^{-1}L(I\otimes W)$ we deduce
from \eqref{1.1}, \eqref{3.1B} and \eqref{3.3B} that for any $f\in
C^1[0,1]\otimes{\Bbb C}^n$
    \begin{eqnarray}\label{3.16B}
\widetilde L f -\lambda f= W^{-1}(x)(-iB)W(x) f' + W^{-1}(x)(-iB)W'(x) f  \nonumber \\
+ W^{-1}(x)Q(x)W(x)f -\lambda f = -i B\frac{d}{dx}f + {\widetilde
Q}(x)f -\lambda f,
    \end{eqnarray}
where
   \begin{equation}\label{3.4B}
\widetilde Q(x) := W^{-1}(x)\bigl(Q(x)-Q_1(x)W(x)\bigr).
   \end{equation}
It follows from \eqref{3.3B} that the matrix function $W(\cdot)$
is quasidiagonal,
    \begin{equation}\label{3.5B}
W(x) = \diag\bigl(W_{11}(x),\ldots, W_{rr}(x)\bigr),
  \end{equation}
with $n_j\times n_j$ nonsingular matrix blocks $W_{jj}(\cdot),\
j\in\{1,\ldots,r\}$. It follows from \eqref{3.4B} and \eqref{3.5B}
that $\wt Q(\cdot)$ is of the form
     \begin{equation}
\wt Q(x) = \bigl(\wt Q_{jk}(x)\bigr)^r_{j,k=1}, \quad \wt
Q_{jj}(x) = 0,\qquad  x\in[0,1],\quad  j\in\{1,\ldots, r\}.
  \end{equation}
Thus, the problem \eqref{1.1}, \eqref{1.3} transforms into similar
problem for equation \eqref{3.16B} with $\wt Q(\cdot)$ instead of
$Q(\cdot)$ and the boundary conditions
    \begin{equation}
C_1 y(0)+D_1 y(1)=0
  \end{equation}
in place of \eqref{1.3}. Here $C_1 := C W(0) = C$ and $D_1 :=
DW(1)$. Due to the block structure \eqref{3.5B} of $W(\cdot)$ and
conditions $\det W_{jj}(\cdot)\not =0$ the pairs $\{C,D\}$ and
$\{C,D W(1)\}$ satisfy the conditions of Theorem \ref{th2.1} only
simultaneously.

Thus, in what follows without loss of generality we may assume
that the matrix function
$Q(\cdot)=\bigl(q_{jk}(\cdot)\bigr)^r_{j,k=1}$ has zero diagonal,
i.e. $q_{jj}(\cdot)=0, j\in\{1,\ldots, r\}$.

(iii) We prove the completeness of system~\eqref{2.14} by
contradiction. To this end, we assume that there exists a vector
function $f=\col(f_1,\dots, f_n) \in L^2[0,1] \otimes
\mathbb{C}^n$ orthogonal to this system. Consider  the entire
function
\begin{equation}\label{2.17}
F_1(\lambda):=(U_1(x;\lambda),f(x))_{L^2[0,1]\otimes\mathbb{C}^n}=
\sum_{j=1}^n\Delta_{1j}(\lambda)
\int_0^1\bigl\langle\Phi_j(x;\lambda),f(x)\bigr\rangle\,dx.
\end{equation}
Clearly, any  $\lambda_k(\in\sigma(L_{C,D}))$ is the zero of
$F_1(\cdot)$ of multiplicity at least $m_k$, i.e.,
\begin{equation}\label{2.18}
\left. F_1^{(p)}(\lambda) \right|_{\lambda=\lambda_k}=0, \qquad
p\in\{0,1,\dots,m_k-1\},\quad \lambda_k\in\sigma(L_{C,D}).
\end{equation}
Thus, the ratio
\begin{equation}\label{fun_G}
 G_1(\lambda):=\frac{F_1(\lambda)}{\Delta(\lambda)}
\end{equation}
is  an entire function.  Let us prove that $G_1(\lambda)\equiv 0$
by estimating its growth.

To this end  we obtain another representation of $G_1(\cdot)$
which is more convenient  for the estimation. Moreover, to
simplify the notions, we restrict ourselves to the case $r=n$,
i.e., assume  that the spectrum of the matrix $B$ is simple.

As in Proposition~\ref{BirkSys}, the complex plane can be divided
into the sectors $S_p = \{z\in\mathbb{C}\,:\ \phi_p < \arg
z<\phi_{p + 1}\}$ such that, for all $\lambda$ inside a certain
sector, the numbers $b_j$ can be ordered as
   \begin{equation}\label{2.19}
\Re(ib_1\lambda)<\dots<\Re(ib_\varkappa\lambda)<0<
\Re(ib_{\varkappa+1}\lambda)<\dots<\Re(ib_n\lambda).
\end{equation}
Moreover, for a sufficiently large $R>0$, in the domain
\begin{equation}\label{2.20}
S_{p,\varepsilon,R} := \{\lambda\in\mathbb{C}_+:\ \phi_p +
\varepsilon<\arg \lambda<\phi_{p + 1}-\varepsilon,\ \
|\lambda|>R\},
   \end{equation}
there exist $n$ linearly independent solutions $Y_j(x;\lambda) =
\col(y_{1j},\dots,y_{nj})$ analytic with respect to $\lambda$ and
having the following asymptotic behavior
\begin{equation}\label{2.21}
y_{jk}(x;\lambda)=\left(\delta_k^j+o\left(1\right)\right)e^{ib_j\lambda x},
\qquad \lambda\in S_{p,\varepsilon,R},
\end{equation}
uniform with respect to $x\in[0,1]$.

Since the solutions $Y_j(\cdot;\lambda)\quad(1\le j\le n)$ are
linearly independent for any $\lambda\in S_{p,\varepsilon,R}$,
then the fundamental $n\times n$ matrices  $\Phi(x;\lambda)$ and
$Y(x;\lambda):=(Y_1,\dots,Y_n)$ of the system~\eqref{1.1} are
related by
   \begin{equation}\label{2.22}
\Phi(x;\lambda)=Y(x;\lambda)P(\lambda),\qquad x\in [0,1], \quad
\lambda\in S_{p,\varepsilon,R},
   \end{equation}
where  $P(\lambda)=:(p_{kj}(\lambda))_{k,j=1}^n$ is an analytical
invertible matrix function in $S_{p,\varepsilon,R}$.

Further, apart from $A_\Phi(\lambda)$, we introduce the matrix
function
\begin{equation}\label{2.23}
A_Y(\lambda) = CY(0;\lambda) + DY(1;\lambda),
\end{equation}
and denote its determinant  by $\Delta_Y(\lambda):=\det
A_Y(\lambda)$. Besides this, alongside with $U_j(x;\lambda)$ of
the form~\eqref{2.13}, we consider the vector functions
\begin{equation}\label{2.24}
V_j(x;\lambda):=\sum_{k=1}^n\Delta_Y^{jk}(\lambda)Y_k(x;\lambda),
\qquad j\in\{1,2,\dots,n\},
\end{equation}
where $\Delta_Y^{jk}(\lambda)$ is the  cofactor  of the $jk$th
entry of the matrix $A_Y(\lambda)$. Clearly, $V_j(x;\lambda)$ are
holomorphic in $S_{p,\varepsilon,R}$.

Both~\eqref{2.22}, \eqref{2.23} and the definition of
$A_\Phi(\lambda)$ (see~\eqref{2.11}) yield the relations
\begin{equation}\label{2.25}
A_\Phi(\lambda)=A_Y(\lambda)P(\lambda),
\qquad\Delta_\Phi(\lambda)=\Delta_Y(\lambda)\,\det P(\lambda).
\end{equation}

Let $A_\Phi(\lambda)=:(a_{jk}(\lambda))_{j,k=1}^n,\quad
A_Y(\lambda)=:(\widetilde a_{jk}(\lambda))_{j,k=1}^n$. Taking
account of these notation, we derive from~\eqref{2.22}
and~\eqref{2.25} the relations
   \begin{equation}\label{2.26}
   \begin{pmatrix}
    \varphi_{j1} & \varphi_{j2} & \dots & \varphi_{jn} \\
     a_{21} &  a_{22} & \dots &  a_{2n} \\
    \ldots\\
     a_{n1} &  a_{n2} & \dots &  a_{nn} \\
  \end{pmatrix}=
  \begin{pmatrix}
     y_{j1}       &    y_{j2}     & \dots &  y_{jn}       \\
    \widetilde a_{21} & \widetilde a_{22} & \dots & \widetilde a_{2n} \\
    \ldots\\
    \widetilde a_{n1} & \widetilde a_{n2} & \dots & \widetilde a_{nn} \\
  \end{pmatrix}P(\lambda),\quad  j\in\{1,\dots,n\}.
\end{equation}
Note that the system~\eqref{2.26} is equivalent to the formal
equality that can be obtained from the first equation
in~\eqref{2.25} if one replaces the first lines in the matrices
$A_\Phi(\lambda)$ and $A_Y(\lambda)$ by the "lines"\
$(\Phi_1,\dots,\Phi_n)$ and $(Y_1,\dots,Y_n)$, respectively. The
desirable connection between the vector functions $U_1(x;\lambda)$
and $V_1(x;\lambda)$ is implied now by~\eqref{2.13}, \eqref{2.24}
and~\eqref{2.26}:
\begin{equation}\label{2.27}
U_1(x;\lambda)=V_1(x;\lambda)\det P(\lambda),\qquad\lambda\in
S_{p, \varepsilon,R}.
     \end{equation}
By setting
     \begin{multline}\label{2.28}
\widetilde F_1(\lambda):=(V_1(x;\lambda),
f(x))_{L^2\otimes\mathbb{C}^n}
 =\sum_{j=1}^n \Delta^{1j}_Y(\lambda)\int_0^1\left\langle
Y_j(x;\lambda),f(x)\right\rangle\,dx \\
= \sum_{j=1}^n
\Delta^{1j}_Y(\lambda)\sum_{k=1}^n\int_0^1Y_{kj}(x;\lambda)\overline{f_k(x)}\,dx
      \end{multline}
and by taking into account~\eqref{2.17}, \eqref{2.27}
and~\eqref{2.28}, we arrive at the relation
      \begin{equation}\label{2.29}
F_1(\lambda)=\widetilde F_1(\lambda)\det P(\lambda).
   \end{equation}
Finally, combining the second equality in~\eqref{2.25}
with~\eqref{2.29}, we arrive at the second representation of the
entire function $G_1(\cdot)$:
\begin{equation}\label{2.30}
G_1(\lambda) = \widetilde
F_1(\lambda)/\Delta_Y(\lambda),\qquad\lambda\in
S_{p,\varepsilon,R}.
\end{equation}

(iv) In  this step we estimate $G_1(\cdot)$ on the rays $l_m
=\{\zeta_m t:\ t\in \mathbb{R}_+\}$,  \ $m\in \{1,2,3\},$ using
the representation~\eqref{2.30}. Here $\zeta_m=i z_m$ where $z_m$
are taken from the condition $(b)$ of the theorem.

Since $C= (c_{kj})_{k,j=1}^n,\  D = (d_{kj})_{k,j=1}^n$, it
follows from \eqref{2.23} and  \eqref{2.21}  that the matrix
$A_Y(\lambda)$ admits the following representation
\begin{equation}\label{3.31}
A_Y(\zeta_mt)\!=\!
\begin{pmatrix}
[c_{11}]\!+\![d_{11}]e^{ib_1\zeta_mt} & [c_{12}]\!+\![d_{12}]e^{ib_2\zeta_mt} & \!\dots\! & [c_{1n}]\!+\![d_{1n}]e^{ib_n\zeta_mt} \\
[c_{21}]\!+\![d_{21}]e^{ib_1\zeta_mt} & [c_{22}]\!+\![d_{22}]e^{ib_2\zeta_mt} & \!\dots\! & [c_{2n}]\!+\![d_{2n}]e^{ib_n\zeta_mt} \\
\hdotsfor{4} \\
[c_{n1}]\!+\![d_{n1}]e^{ib_1\zeta_mt} & [c_{n2}]\!+\![d_{n2}]e^{ib_2\zeta_mt} &
\!\dots\! & [c_{nn}]\!+\![d_{nn}]e^{ib_n\zeta_mt}
\end{pmatrix}\!.
\end{equation}
Noting  that
$$
[c_{kj}]\!+\![d_{kj}]e^{ib_j\zeta_mt} \sim c_{kj} \qquad \text{for}\quad \Re (
b_j z_m)>0, \quad k\in \{1,\dots,n\},
$$
and
$$
[c_{kj}]\!+\![d_{kj}]e^{ib_j\zeta_mt}  \sim d_{kj}e^{ib_j\zeta_mt} \qquad
\text{for}\quad  \Re (b_j z_m) < 0, \quad k\in \{1,\dots,n\},
$$
we arrive at the  asymptotic estimate for the characteristic determinant
\begin{equation}\label{3.32}
\Delta_Y(\zeta_mt) =  \det A_Y(\zeta_mt) = e^{\beta_m t}(\det
T_{z_m B}(C,D) + o(1))\quad  \text{as} \quad t \to \infty,
\end{equation}
along the ray $l_m$. Here  $\beta_m := \sum_{\Re (i b_j
\zeta_m)>0}i b_j \zeta_m$ and $T_{z_m B}(C,D)$ is the matrix from
the assumption (b) of the theorem.

Next we estimate $\widetilde F_1(\cdot).$ Since
$\Delta_Y^{1j}(\zeta_mt) = O(e^{\beta_m t})$ for $\Re (i b_j
\zeta_m)<0$, estimates ~\eqref{2.21} yield
\begin{equation}\label{3.33}
\Delta_Y^{1j}(\zeta_mt)Y_j(x;\zeta_mt)=e^{\beta_m t}O(e^{i b_j
\zeta_m tx}).
\end{equation}
If $\Re (i b_j \zeta_m)>0$ then
$\Delta_Y^{1j}(\zeta_mt)=O(e^{(\beta_m-i b_j \zeta_m)t})$, and in
this case we obtain:
\begin{equation}\label{3.34}
\Delta_Y^{1j}(\zeta_mt)Y_j(x;\zeta_mt)=e^{(\beta_m-i b_j
\zeta_m)t}O(e^{i b_j \zeta_m tx})=e^{\beta_m t}O(e^{i b_j \zeta_m
t(x-1)}).
\end{equation}

Denote by $s_-$ the maximal negative number from $\Re (i b_j \zeta_m)$, and by
$s_+$ the minimal positive number from the same set. Then we have
\begin{equation}\label{3.35}
\Delta_Y^{1j}(\zeta_mt)Y_j(x;\zeta_mt)=e^{\beta_m t}O(\max (e^{s_-
tx},e^{s_+ t(x-1)})), \qquad j\in\{1,2,\dots,n\}.
\end{equation}

Hence the function $V_1$ of the form~\eqref{2.24} are estimated
along the rays $l_m = \{\lambda: \lambda=\zeta_mt\}$,  as above,
i.e.,
\begin{equation}\label{3.36}
V_1(x;\zeta_mt)=e^{\beta_m t}O(\max (e^{s_- tx},e^{s_+
t(x-1)}))=e^{\beta_m t}O(e^{s_- tx}+e^{s_+ t(x-1)}).
\end{equation}
It follows that
\begin{multline}\label{3.37}
 \widetilde{F_1}(\zeta_mt)=\int_0^1\left\langle V_1(x;\zeta_mt),f(x)\right\rangle \,dx =
 e^{\beta_m t} O \left(\int_0^1|f(x)|(e^{s_- tx}+e^{s_+ t(x-1)})\,dx \right) \\
 \le  Ce^{\beta_m t}\sqrt{\int_0^1|f(x)|^2\,dx}\sqrt{\int_0^1(e^{s_- tx}+e^{s_+ t(x-1)})^2\,dx}=o(e^{\beta_m t}),
\end{multline}
since $\int_0^1(e^{s_- tx}+e^{s_+ t(x-1)})^2\,dx\to 0$ as
$t\to\infty$.

Combining estimates \eqref{3.32} and \eqref{3.37} we get
\begin{equation*}
G_1(\zeta_mt) = \frac{\widetilde{F_1}(\zeta_mt)}{\Delta_Y(\zeta_mt)} =
\frac{o(e^{\beta_m t})}{(\det T_{z_m B}(C,D) + o(1))e^{\beta_m t}}\to 0 \quad
\text{as}\quad t\to\infty.
\end{equation*}
It follows from \eqref{fun_G}, \eqref{2.17}, that  $G_1(\cdot)$ is
the entire function of type not greater than exponential, hence it
is bounded in each of the (convex) angles formed by pairs of the
rays $l_k$. Since the origin is the interior point of the triangle
$\triangle_{\zeta_1 \zeta_2 \zeta_3}$, we obtain that these angles
cover the whole complex plain. Thus, $G_1(\cdot)$ is bounded in
$\mathbb{C}$ and tends to zero along each of the rays $l_k$, Hence
$G_1(\lambda)\equiv 0$, by the Liouville theorem.

As in~\eqref{fun_G}, we introduce the functions
\begin{equation}
G_j(\lambda):= {F_j(\lambda)}{\Delta(\lambda)}^{-1},\qquad
j\in\{2,3,\dots,n\},
\end{equation}
and show that $G_j(\lambda)\equiv 0$ for $j\in\{2,3,\dots,n\}$.

(v) Note that, for $\lambda\notin\sigma(L_{C,D})$, the functions
$U_j(x;\lambda)$ form the fundamental systems of solutions of the
system~\eqref{1.1}. Since $f(x)$ is orthogonal to all the
$U_j(x;\lambda)$, $\ j\in\{1,2,\dots,n\}$, we conclude that it is
orthogonal to all solutions of the system~\eqref{1.1} whenever
$\lambda\notin\sigma(L_{C,D})$. Therefore,
\begin{equation}\label{3.38}
\int_0^1\bigl\langle\Phi_j(x;\lambda),f(x)\bigr\rangle\,dx=0,\qquad
\lambda\notin\sigma(L_{C,D}),\quad j\in\{1,2,\dots,n\}.
\end{equation}
But, due to the continuity of the integral~\eqref{3.38} with
respect to $\lambda$ and the discreteness of the set
$\sigma(L_{C,D})$, the following relations hold:
\begin{equation}\label{2.42}
\int_0^1\bigl\langle\Phi_j(x;\lambda),f(x)\bigr\rangle\,dx \equiv
0,\qquad \lambda\in\mathbb{C},\quad j\in\{1,2,\dots,n\}.
\end{equation}

(vi) At this step, we  show that the vector function $f$
satisfying relations~\eqref{2.42} is the zero function. To this
end, consider the resolvent $R_L(\lambda )$ of the operator $L$ of
the form~\eqref{1.1} subject to the initial conditions
   \begin{equation} \label{2.43}
Y(0)=\col\bigl(y_1(0),\dots,y_n(0)\bigr)=0.
    \end{equation}
As above, let $\Phi (x;\lambda)$ stand for the fundamental matrix
solution of the equation~\eqref{1.1} satisfying the condition
\eqref{2.7}. It can easily be seen that the Green matrix of the
Cauchy problem~\eqref{1.1}, \eqref{2.43} is
  \begin{equation} \label{2.44}
G(x,t;\lambda )=
\begin{cases}
\Phi (x;\lambda )\Phi ^{-1}(t;\lambda )(-iB)^{-1},& t\le x \\
0 , &         t>x
\end{cases},
   \end{equation}
and is an entire function with respect to $\lambda \in \mathbb{C }$. Hence
$R_L(\lambda )$ is a Volterra operator: $(R_L(\lambda)\varphi)(x) = \int
^x_0G(x,t;\lambda )\varphi(t)dt,\quad \varphi \in L^2[0,1]$.

Alongside with the $\Phi (x;\lambda)$, consider the matrix
function
\begin{equation*}
 Y(x;\lambda ):=\bigl(Y_1(x;\lambda),...,Y_n(x;\lambda )\bigr)
\end{equation*}
consisting from the solutions $Y_j(x;\lambda
)=\col(y_{1j},...,y_{nj})$ satisfying the asymptotic relations
 ~\eqref{2.21}. Clearly, $Y(x;\lambda )$ is the fundamental matrix
of~\eqref{1.1} for $\lambda \in S_{\pm}:=\pm S_{p,\varepsilon ,R}$. By
\eqref{2.22}
$ Y(x;\lambda )=\Phi (x;\lambda )P^{-1}(\lambda ), \ \lambda \in
S_{\pm},$ where $P^{-1}(\lambda )\in \mathbb{C}^{n\times n}$ for
$\lambda \in S_{\pm}$. Therefore,
   \begin{equation*}
Y(x;\lambda )Y^{-1}(t;\lambda )(-iB)^{-1}= \Phi (x;\lambda )\Phi
^{-1}(t;\lambda )(-iB)^{-1}, \quad \lambda \in S_{\pm},
   \end{equation*}
and the Green matrix $G(x,t;\lambda )$ is the analytic continuation of the
matrix function $Y(x;\lambda )Y^{-1}(t;\lambda )(-iB)^{-1}$.
In particular, for $\lambda \in S_{\pm}$ the operator
$R^*_L(\lambda):=\bigl(R_L(\lambda)\bigr)^*$ admits the
representation
   \begin{equation} \label{2.45}
(R^*_L(\lambda)\varphi)(x)=(iB^{-1})^* Y^{-1}(x,\lambda)^* \int ^1_x Y^* (t,
\lambda)\varphi(t)dt, \quad \lambda \in S_{\pm}.
   \end{equation}
Further, since $f$ satisfies conditions~\eqref{2.42}, we have
    \begin{equation} \label{2.46}
 \int ^1_0Y^*(t,\lambda)f(t)dt=0, \qquad \lambda \in S_{\pm}.
    \end{equation}
From~\eqref{2.21} follows that $Y(x;\lambda )$ admits the representation
   \begin{equation} \label{2.47}
Y(x;\lambda )=\mathcal{I}_n(x;\lambda ) e(x;\lambda ), \qquad \lambda\in
S_{\pm},
    \end{equation}
in which $\mathcal{I}_n(x;\lambda )=I_n+o_n(1)$ and
   \begin{equation} \label{2.48}
e(x;\lambda ):=\diag(e^{ib _1\lambda x},...,e^{ib _n\lambda x}).
    \end{equation}
By  multiplying~\eqref{2.46} from the left by the matrix
\begin{equation*} \widetilde e(x;\lambda
):=\diag(e^{i\,\overline{b_1 \lambda} x},...,e^{i\,\overline{b_n \lambda}
x})=e^{-1}(x;\lambda)^*
    \end{equation*}
and by taking into account~\eqref{2.47} and~\eqref{2.48}, we
arrive at the relation
   \begin{multline} \label{2.49}
\Theta (x;\lambda ):=\int^1_x\widetilde e(x-t;\lambda )
 \mathcal{I}_n^*(t;\lambda) f(t)dt=\\
=-\int^x_0\widetilde e(x-t;\lambda ) \mathcal{I}_n^*(t;\lambda) f(t)dt, \qquad
\lambda\in S_{\pm}.
   \end{multline}
By setting
  \begin{equation} \label{2.50}
g(t;\lambda )=\col\bigl(g_1(t;\lambda ),...,g_n(t;\lambda )\bigr):=
\mathcal{I}_n^*(t;\lambda) f(t),\quad \lambda \in S_{\pm},
  \end{equation}
we rewrite the matrix equality~\eqref{2.49} as a system of $n$
scalar equalities:
  \begin{multline} \label{2.51}
\int ^x_0e^{i\,\overline{b_j \lambda} (x-t)}g_j(t;\lambda )dt=
-\int ^1_xe^{i\,\overline{b _j \lambda} (x-t)}g_j(t;\lambda)dt, \\
\qquad \lambda \in S_{\pm}, \quad j\in\{1,2,\dots,n\}.
   \end{multline}

Since $\Re(i \overline{b_j \lambda})=-\Re(i b_j \lambda)$ then~\eqref{2.19}
implies that the functions $e^{i\,\overline{b_j \lambda} x}$, $\left(x\in
[0,1]\right)$ are bounded in the sector $S_-$ for $j\in \{1,...,\kappa\}$ and
in the sector $S_+$ for $j\in \{\kappa+1,...,n\}$. Due to~\eqref{2.50} the
functions $g_j(\cdot;\lambda )$ have uniformly bounded norms in $L^2[0,1]$ for
$\lambda \in S_{\pm}$. Now we conclude from~\eqref{2.51} that
\begin{equation}\label{Theta(x)=o(1)}
\Theta (x;\lambda )=o(1) \quad \text{for} \quad \lambda \in
S_{\pm},\ \lambda \to \infty \qquad (\text{for every}\  x \in
[0,1]).
\end{equation}
Further, denote
\begin{equation*}
G_f(x;\lambda ):=\overline{(R^*_L(\lambda)f)(x)}.
\end{equation*}
By~\eqref{2.45} and~\eqref{2.47}--\eqref{2.49}, for $\lambda \in
S_{\pm}$, the $G_f(x;\lambda )$ admits the representation
 \begin{multline}\label{2.51b}
\overline{G_f(x;\lambda )}=(iB^{-1})^*\int ^1_x\mathcal{I}_n^{-1}(x;\lambda)^*
 \widetilde{e}(x-t;\lambda)\mathcal{I}_n^*(t;\lambda) f(t)dt= \\
 =(iB^{-1})^*\mathcal{I}_n^{-1}(x;\lambda)^*
 \Theta(x;\lambda),
 \end{multline}
and hence from~\eqref{Theta(x)=o(1)} we conclude that
\begin{equation}\label{Gfx=o(1)}
G_f(x;\lambda)=o(1) \ \ \text{for} \ \ \lambda \in S_{\pm},\ \lambda \to
\infty.
\end{equation}
But $G_f(x;\lambda )$ is the entire function of exponential type (for every
$x\in [0,1]$). Moreover, since $G_f(x;\lambda )$ is bounded along the pair of
rays in $S_+$ and along the pair of opposite rays in $S_-$, it is bounded in
$\mathbb{C}$ due to the Phragm\'{e}n-Lindel\"{o}f theorem~\cite{Lev56}. By the
Liouville theorem, $G_f(x;\lambda )$ does not depend on $\lambda$, i.e.,
$G_f(x;\lambda )=:c(x), \ x\in [0,1]$. Due to~\eqref{Gfx=o(1)} the function
$c(x)$ is zero and hence $(R_L^*(\lambda)f)(x)=G_f(x;\lambda )\equiv 0$. It
follows that $f = 0$.

(vii) The minimality of the system of EAF follows from
Lemma~\ref{Lem_minimality} applied  to the resolvent operator
$R_{L_{C,D}}(\lambda)$ with $\lambda \in \rho(L_{C,D})$.
    \end{proof}
  \begin{corollary} \label{cor2.1}
Let  $Q\in L^2[0,1]\otimes {\Bbb C}^{n\times n}$ and let the
matrices $T_{zB}(C,D)$ and $T_{-zB}(C,D) = T_{zB}(D,C)$ be
nonsingular for some $z\in \mathbb{C}.$  Then

(i) The boundary conditions \eqref{1.3} are weakly $B$-regular.

(ii) The system of EAF of the operator $L_{C,D}(Q)$ is complete
and minimal in $L^2\left([0,1];{\Bbb C}^n\right)$.
  \end{corollary}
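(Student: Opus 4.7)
The plan is to derive (i) directly from the hypothesis and then invoke Theorem~\ref{th2.1} to obtain (ii); the bulk of the work is in verifying weak $B$-regularity.

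The key preliminary observation is that $T_{\zeta B}(C,D)$ depends on $\zeta$ only through the signs of the numbers $\Re(\zeta b_k)$, $k=1,\dots,r$, so $\det T_{\zeta B}(C,D)$ is constant on every connected component of the open set
\begin{equation*}
\Omega:=\{\zeta\in\mathbb{C}:\Re(\zeta b_k)\ne 0,\ k=1,\dots,r\}.
\end{equation*}
Since the matrices $T_{\pm zB}(C,D)$ are nonsingular by assumption, both $z$ and $-z$ lie in $\Omega$; denote by $\Sigma_+$ and $\Sigma_-=-\Sigma_+$ the open sectors of $\Omega$ containing $z$ and $-z$ respectively. Then $\det T_{\zeta B}(C,D)\ne 0$ for every $\zeta\in\Sigma_+\cup\Sigma_-$.

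Next I would construct the three required points. The naive choice $z_1=z$, $z_2=-z$ fails because any triangle with these as two vertices has the origin on the edge $[z_1,z_2]$, not in the interior. The remedy is to split the vertex $-z$ into two nearby vertices in directions transverse to $\mathbb{R}z$: set
\begin{equation*}
z_1:=z,\qquad z_2:=-z+i\varepsilon z,\qquad z_3:=-z-i\varepsilon z,
\end{equation*}
with $\varepsilon>0$ chosen so small that both $z_2$ and $z_3$ remain inside $\Sigma_-$ (possible because $\Sigma_-$ is open and $-z$ is interior to it). A direct computation gives $\tfrac12 z_1+\tfrac14 z_2+\tfrac14 z_3=0$ with all coefficients strictly positive, so the origin is an interior point of $\triangle_{z_1z_2z_3}$; meanwhile, $\det T_{z_jB}(C,D)\ne 0$ for $j=1,2,3$ by the previous paragraph. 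This verifies both conditions of Definition~\ref{def1.1}, which proves~(i).

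Part (ii) is then immediate: the hypothesis $Q\in L^2([0,1];\mathbb{C}^{n\times n})$ together with the weak $B$-regularity just established places us in the setting of Theorem~\ref{th2.1}, whose conclusion is exactly the completeness and minimality of the root functions of $L_{C,D}(Q)$ in $L^2([0,1];\mathbb{C}^n)$. There is no substantive obstacle in the argument; the only mild subtlety is the geometric point that $z$ and $-z$ alone cannot serve as two vertices of a non-degenerate triangle whose interior contains the origin, which is handled by the transverse perturbation above.
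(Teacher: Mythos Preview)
Your argument is correct and follows essentially the same route as the paper: both proofs use the local constancy of $\zeta\mapsto T_{\zeta B}(C,D)$ on the admissible set to replace one of the antipodal points $\pm z$ by two nearby perturbations, obtaining a non-degenerate triangle with the origin in its interior, and then invoke Theorem~\ref{th2.1}. The only cosmetic difference is that the paper perturbs $z$ to $(1\pm\delta)z$ and keeps $z_3=-z$, whereas you keep $z_1=z$ and perturb $-z$ to $-z\pm i\varepsilon z$; your explicit transverse perturbation and convex-combination check make the geometric point slightly cleaner.
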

   \begin{proof}[Proof]
Since all the numbers $\Re (zb_k)$ are different from zero, we get
that, for sufficiently small $\delta$, the signs  of $\Re
((1\pm\delta)z b_k)$ coincide with the sign of $\Re (zb_k)$. It
follows that the matrices $T_{zB}(C,D)$, $T_{(1+\delta)zB}(C,D)$
and $T_{(1-\delta)zB}(C,D)$ coincide. Thus, we can apply
Theorem~\ref{th2.1} to the operator $L_{C,D}(Q)$ and  the points
$z_1=(1+\delta)z$, $z_2=(1-\delta)z$ and $z_3=-z$.
       \end{proof}

\subsection{Completeness result for adjoint operator}

   \begin{corollary}\label{cor2.1A}
Let boundary conditions~\eqref{1.3} be weakly $B$-regular. Then

(i)\ The boundary conditions
   \begin{equation}\label{3.58}
C_*g(0) + D_*g(1)=0
   \end{equation}
of the adjoint boundary value problem are weakly $B^*$-regular.

(ii)\  The system of root functions of the adjoint operator
$L^*_{C,D}$ is complete and minimal in $L^2[0,1]\otimes
\mathbb{C}^n$.
     \end{corollary}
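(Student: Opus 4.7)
The plan is to identify $L^*_{C,D}(Q)$ as a BVP operator of the form \eqref{1.1}--\eqref{1.3} with $B$ replaced by $-B^*$ and $Q$ by $Q^*$, and then apply Theorem~\ref{th2.1} to this adjoint BVP once weak $(-B^*)$-regularity of its boundary conditions is verified.

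First, integration by parts yields the Lagrange identity
\begin{equation*}
\langle Ly,g\rangle_{L^2}-\langle y,L^*g\rangle_{L^2}=-i\langle By(1),g(1)\rangle+i\langle By(0),g(0)\rangle,
\end{equation*}
so the formal adjoint is $L^*g=\tfrac{1}{i}(-B^*)g'+Q^*g$. I would introduce on $\mathbb{C}^{2n}$ the non-degenerate sesquilinear form
\begin{equation*}
\omega\bigl(\tbinom{u_0}{u_1},\tbinom{v_0}{v_1}\bigr):=i\langle Bu_0,v_0\rangle-i\langle Bu_1,v_1\rangle,
\end{equation*}
and set $\mathcal{M}:=\ker(C,D)$ and $\mathcal{M}_*:=\ker(C_*,D_*)$. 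The adjoint boundary conditions are then characterized by $\mathcal{M}_*=\mathcal{M}^\omega$ (the $\omega$-orthogonal complement), and the maximality condition~\eqref{1.4} for $(C_*,D_*)$ follows from non-degeneracy of $\omega$ and $\dim\mathcal{M}^\omega=n$. Thus $L^*_{C,D}(Q)$ is indeed a BVP operator of the stated form.

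The algebraic core for part~(i) is a duality between $\det T_{zB}(C,D)$ and $\det T_{wB^*}(C_*,D_*)$. For admissible $z$ I would introduce the coordinate subspace
\begin{equation*}
V_z:=\Span\bigl\{\tbinom{e_k}{0}:\Re(zb_k)>0\bigr\}+\Span\bigl\{\tbinom{0}{e_k}:\Re(zb_k)<0\bigr\}\subset\mathbb{C}^{2n}.
\end{equation*}
Two direct calculations give (a) $\det T_{zB}(C,D)\neq 0$ iff $V_z\oplus\mathcal{M}=\mathbb{C}^{2n}$, and (b) $V_z^\omega=V_{-z}$ (which uses that $b_k\neq 0$). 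Combined with the general duality $U_1\oplus U_2=\mathbb{C}^{2n}\iff U_1^\omega\oplus U_2^\omega=\mathbb{C}^{2n}$ (valid for $n$-dimensional subspaces of $\mathbb{C}^{2n}$ under a non-degenerate form), this yields
\begin{equation*}
\det T_{zB}(C,D)\neq 0\quad\Longleftrightarrow\quad V_{-z}\oplus\mathcal{M}_*=\mathbb{C}^{2n},
\end{equation*}
and a parallel analysis for $(C_*,D_*)$ with $B^*$, using $\Re(w\bar b_k)=\Re(\bar w b_k)$, identifies the right-hand side with $\det T_{-\bar z\,B^*}(C_*,D_*)\neq 0$.

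Hence if $(C,D)$ is weakly $B$-regular at $z_1,z_2,z_3$, then $(C_*,D_*)$ satisfies $\det T_{w_jB^*}(C_*,D_*)\neq 0$ at $w_j:=-\bar z_j$; and since $z\mapsto-\bar z$ is a real-linear involution fixing the origin, $0$ remains interior to $\triangle_{w_1w_2w_3}$, so $(C_*,D_*)$ is weakly $B^*$-regular. Moreover, $T_{zB^*}(X,Y)=T_{(-z)(-B^*)}(X,Y)$ together with the negation-invariance of the origin-in-interior property shows that weak $B^*$-regularity coincides with weak $(-B^*)$-regularity. This proves~(i); part~(ii) then follows by applying Theorem~\ref{th2.1} to the adjoint BVP with data $(-B^*,Q^*,C_*,D_*)$. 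The main obstacle will be the sign and conjugation bookkeeping in the duality step---in particular verifying $V_z^\omega=V_{-z}$ and tracking the $z\mapsto-\bar z$ correspondence between the two notions of weak regularity---after which the result reduces to a direct appeal to the already-established Theorem~\ref{th2.1}.
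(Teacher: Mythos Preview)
Your proposal is correct and follows essentially the same approach as the paper's own proof: both set up the non-degenerate sesquilinear form on $\mathbb{C}^{2n}$ induced by $\diag(B,-B)$, identify $\ker(C_*,D_*)$ as the orthogonal complement of $\ker(C,D)$, characterize $\det T_{zB}(C,D)\neq 0$ via the subspace condition $\ker(C,D)\cap V_z=\{0\}$ (equivalently your $V_z\oplus\mathcal{M}=\mathbb{C}^{2n}$), and then pass to orthogonal complements to obtain the key equivalence $\det T_{zB}(C,D)\neq 0\iff\det T_{-\bar z\,B^*}(C_*,D_*)\neq 0$. Your treatment is in fact slightly more explicit than the paper's in two places: you spell out $V_z^\omega=V_{-z}$ directly, and you address the distinction between weak $B^*$-regularity and weak $(-B^*)$-regularity (the paper writes the formal adjoint as $\tfrac{1}{i}B^*\tfrac{d}{dx}+Q^*$ and applies Theorem~\ref{th2.1} without commenting on the sign, whereas you note the adjoint differential expression carries $-B^*$ and that this is immaterial for weak regularity).
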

     \begin{proof}[Proof]
(i)\   The adjoint operator $L^*_{C,D} := (L_{C,D})^*$ is defined
as a restriction of the maximal  differential operator
   \begin{equation*} L^*:=\frac{1}{i}B^*\otimes
\frac{d}{dx}+Q^*(x), \qquad \dom(L^*) =  W^1_2([0,1];{\Bbb C}^n),
    \end{equation*}
to the domain
$\dom(L^*_{C,D}) =  \{g\in W^1_2([0,1];{\Bbb C}^n):\ C_*g(0) +
D_*g(1)=0\}.$
Moreover, if $Cf(0) + Df(1)=0$ and $C_*g(0) + D_*g(1)=0$, we have
     \begin{equation}\label{3.t3.m.2}
\langle Bf(0), g(0)\rangle - \langle Bf(1), g(1)\rangle = 0.
      \end{equation}
Put $\widetilde{B} :=\diag (B, -B)$ and consider  ${\mathcal H}
=\mathbb{C}^n\oplus \mathbb{C}^n$ as a space with bilinear  form
   \begin{equation}\label{3.59A}
w (\varphi,\psi) := \langle \widetilde{B}{\varphi},\psi \rangle =
\left\langle B{\varphi}_1,\psi_1 \right\rangle - \left\langle
B{\varphi}_2, \psi_2 \right\rangle,
    \end{equation}
where  ${\varphi}=\col({\varphi}_1,{\varphi}_2),\ \psi =\col(\psi
_1,\psi _2).$
The condition~\eqref{3.t3.m.2} means that the subspace
$\Ker(C_*\,\,D_*)$ is the right  $w$-orthogonal to  $\Ker(C\,\,D)$ in
${\mathcal H}$.  Since $\dim \Ker(C\,\,D)=\dim \Ker(C_*\,\,D_*)=n$,  the
subspace $\Ker(C\,\,D)$ is non-degenerate and $\{\Ker(C\,\,D)\}^{\perp}
= \Ker(C_*\,\,D_*)$, i.e. $\Ker(C_*\,\,D_*)$ is the (right)
$w$-orthogonal complement of $\Ker(C\,\,D).$

Let $\beta_1,\beta_2,\dots, \beta_{2n}$ be the eigenvalues of
$\widetilde B$ and let $e_1, e_2,\dots, e_{2n}$ be the
corresponding  eigenvectors. For every admissible  $z$ (i.e. such
that $z\beta_k \not \in i\Bbb R$ for every $k\le 2n$) we put
${\mathcal H}_z = \Span\{e_k:\ \Re(z\beta_k)>0 \}.$ Since
$\beta_{n+k} = - \beta_{k}\in \sigma(\widetilde B),\ k\in
\{1,\ldots, n\}$, $\dim{\mathcal H}_z=n$ for every admissible $z$.

Next we note that
       \begin{equation}\label{3.59}
T_{zB}(C,D)= \left. (C\,\,D) \right|_{{\mathcal H}_z}.
       \end{equation}
Therefore,  $\det T_{zB}(C,D)\ne 0$ if and only if $ \left. \Ker((C\,\,D)
\right|_{{\mathcal H}_z})=\{0\},$ i.e.
$\Ker(C\,\,D)\cap {\mathcal H}_z = \{0\}$.  Since  $\dim
\Ker(C\,\,D)=\dim {\mathcal H}_z=n$, the latter identity is also
valid for the  right $w$-orthogonal complements of these
subspaces, i.e. $\Ker(C_*\,\,D_*)\cap {\mathcal H}_{-z} = \{0\}.$

Alongside the space ${\mathcal H}$, we  consider the same space
${\mathcal H_*}=\mathbb{C}^{2n}=\mathbb{C}^n\oplus \mathbb{C}^n$
equipped with another non-degenerate  bilinear form
   \begin{equation*}
w^*({\varphi},\psi) := \langle
\widetilde{B}^*{\varphi},\psi\rangle = \left\langle
B^*{\varphi}_1,\psi_1 \right\rangle -\left\langle B^*{\varphi}_2,
\psi_2 \right\rangle.
    \end{equation*}
Next we  define the corresponding subspaces ${\mathcal H_*}_z$
with respect to the form $w^*(\cdot,\cdot)$ (matrices
$z\widetilde{B}^*$) and note that
       \begin{equation}\label{3.60A}
T_{zB^*}(C_*,D_*)= \left. (C_*\,\,D_*) \right|_{{\mathcal H_*}_z}.
       \end{equation}

Since $\Re(\overline{z \beta_k})=\Re(z \beta_k)$,  one has
${\mathcal H_*}_z={\mathcal H}_{\overline{z}}$. Hence
$\Ker(C\,\,D)\cap {\mathcal H}_z = \{0\}$ is equivalent to
$\Ker(C_*\,\,D_*)\cap {\mathcal H}_{*-\overline{z}} = \{0\}.$
Combining this equivalence with  relations \eqref{3.59} and
\eqref{3.60A}  we get
$$
\det T_{zB}(C,D) \ne 0 \ \Longleftrightarrow \ \det
T_{-\overline{z}B^*}(C_*,D_*) \ne 0.
$$
Hence boundary conditions \eqref{3.58} are weakly  $B^*$-regular
and conditions of Definition \ref{def1.1} are satisfied with
points $-\overline{z_1},-\overline{z_2},-\overline{z_3}$.

(ii) Combining statement (i) with Theorem~\ref{th2.1} we get the
result.
  \end{proof}
         \begin{remark} \label{rem2.1}
(i)\  Theorem \ref{th2.1} remains valid for the
integro-differential operator
\begin{equation} \label{2.3a}
-iBy'+ Q(x)y + \int_0^xM(x,t)y(t)\,dt=\lambda y, \qquad y\in\col
(y_1,y_2,\dots,y_n),
\end{equation}
with a kernel $M(x,t)\in L^\infty(\Omega)\otimes \Bbb C^{n\times
n}$.

(ii) If the maximality condition \eqref{1.4}  is violated, i.e.
${\rm rank}(C \ \ D) \le  n-1$, then  the characteristic
determinant \eqref{2.11} is identical zero.  Indeed, in this case
  $$
{\rm rank}(C + D\Phi(1;\lambda)) = {\rm rank}\left((C\ \
D)\binom{I_n}{\Phi(1;\lambda)}\right)\le  {\rm rank}(C\ \ D) \le
n-1.
  $$
Hence $\Delta_\Phi(\lambda) = \det(C + D\Phi(1;\lambda)) \equiv 0,
\quad \lambda\in \Bbb C.$

Note however that the latter  might happen even whenever ${\rm
rank}(C \ \ D) = n.$
        \end{remark}

\subsection{Examples}

\begin{example} \label{ex1}
Assume that $C\in {\Bbb C}^{n\times n},$ and  $\det C\not = 0.$
Let also  $D=CM$, where $M\in {\Bbb C}^{n\times n}$ and all its
principal minors  are nonsingular. In this case, the matrix
$T_A(I_n,M)$ is nonsingular for every matrix $A$. Hence the matrix
$T_A(C,D)=CT_A(I,M)$ is always nonsingular.

For instance, the boundary conditions
       \begin{equation}\label{3.60}
 y_j(0)=d_jy_j(1), \quad d_j \ne 0, \quad j\in\{1,2,\dots,n\},
       \end{equation}
that include the periodic ones   $(d_j = 1)$ have this form with
$C=I_n$ and $D=\diag(d_1,d_2,\ldots,d_n)$ and hence are weakly
$B$-regular for any non-singular $B$.

          \end{example}

Note that  conditions \eqref{3.60}  are regular, i.e., the matrix
$T_{zB}(C,D)$ is nonsingular  for every admissible  $z\in \Bbb C.$

Next we present several examples of \emph{irregular BC \eqref{1.3}
that are weakly $B$-regular}.  To this end we prove  the following
fact mentioned in the Introduction.
  \begin{lemma}\label{lem_reg}
Assume that the boundary conditions~\eqref{1.3} split in  $k$
conditions at 0 and $n-k$ conditions at $1$. Then

(i) If  $\Re(zB)$ is  invertible  and $\det T_{zB}(C,D) \ne 0,$
then $k=\kappa_+(\Re(zB))$.

(ii) If the boundary conditions are regular,  then $n=2k$ and
$\kappa_+(\Re(zB))=\kappa_-(\Re(zB))$ for every admissible $z\in
\Bbb C$, i.e., for those $z$ that  $\Re(zB)$ is invertible.
  \end{lemma}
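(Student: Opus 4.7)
The plan is to reduce the statement to a linear-algebraic bookkeeping by putting the split boundary conditions into a normal form that forces $T_{zB}(C,D)$ into a $2\times 2$ block-diagonal shape. After permuting the rows of $(C\ D)$ (which does not change the subspace $\operatorname{Ker}(C\ D)$ and only changes $\det T_{zB}(C,D)$ by a sign), I would assume that the splitting is arranged so that
\[
C=\begin{pmatrix}C_1\\ 0\end{pmatrix},\qquad D=\begin{pmatrix}0\\ D_2\end{pmatrix},\qquad C_1\in\mathbb{C}^{k\times n},\ \ D_2\in\mathbb{C}^{(n-k)\times n};
\]
the maximality condition \eqref{1.4} then becomes $\operatorname{rank}C_1=k$ and $\operatorname{rank}D_2=n-k$.

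Next I would unpack the structure of $T_{zB}(C,D)$, exploiting that $B$ is diagonal. The matrix $\Re(zB)$ is a real diagonal (hence selfadjoint) matrix whose eigenvalues are $\Re(zb_j^{-1})$ with multiplicity $n_j$; introduce the index sets
\[
J_+:=\{j\in\{1,\dots,n\}:\Re(zb_j^{-1})>0\},\qquad J_-:=\{j\in\{1,\dots,n\}:\Re(zb_j^{-1})<0\},
\]
counted with the multiplicities inherited from the block form of $B$. Invertibility of $\Re(zB)$ gives $|J_+|=\kappa_+(\Re(zB))=:p$, $|J_-|=\kappa_-(\Re(zB))=:q$ and $p+q=n$. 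By the defining recipe of $T_{zB}(C,D)$, its $j$th column equals the $j$th column of $C$ when $j\in J_+$ and of $D$ when $j\in J_-$. Combined with the normal form, the columns indexed by $J_+$ vanish in the last $n-k$ rows, while the columns indexed by $J_-$ vanish in the first $k$ rows, so, after permuting columns to list $J_+$ first,
\[
T_{zB}(C,D)\;\sim\;\begin{pmatrix}C_1|_{J_+} & 0\\ 0 & D_2|_{J_-}\end{pmatrix},
\]
where $C_1|_{J_+}\in\mathbb{C}^{k\times p}$ is the submatrix of $C_1$ on the columns in $J_+$, and $D_2|_{J_-}\in\mathbb{C}^{(n-k)\times q}$ is defined analogously.

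From here (i) is immediate: a block-diagonal matrix of this shape is invertible only if both blocks are square and invertible, so $\det T_{zB}(C,D)\ne 0$ forces $k=p=\kappa_+(\Re(zB))$. For (ii), regularity means $\det T_{zB}(C,D)\ne 0$ for every admissible $z$, hence (i) yields $k=\kappa_+(\Re(zB))$ for every such $z$; replacing $z$ by $-z$ (which is also admissible, since $\Re(-zB)=-\Re(zB)$ is invertible too) gives $k=\kappa_+(-\Re(zB))=\kappa_-(\Re(zB))$. Combining these two equalities with $\kappa_+(\Re(zB))+\kappa_-(\Re(zB))=n$ yields $n=2k$ and $\kappa_+(\Re(zB))=\kappa_-(\Re(zB))$, proving (ii). I do not anticipate any real obstacle: the lemma is essentially combinatorial, and the only care required is the verification that row and column permutations only flip the sign of $\det T_{zB}(C,D)$ (so non-vanishing is preserved) and that the column-selection recipe of $T_{zB}$ aligns with the row-splitting of $C,D$ as described above.
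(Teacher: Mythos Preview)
Your proposal is correct and follows essentially the same approach as the paper: both arguments exploit that the split form of $(C\ D)$ forces the columns of $T_{zB}(C,D)$ taken from $C$ to vanish in the bottom $n-k$ rows and those taken from $D$ to vanish in the top $k$ rows, whence nonsingularity forces $k=\kappa_+(\Re(zB))$, and part (ii) follows by applying (i) to both $z$ and $-z$. The only cosmetic difference is that you make the block-diagonal structure explicit via a column permutation, whereas the paper phrases the same obstruction as the presence of zero submatrices of sizes $(n-k)\times l$ and $k\times(n-l)$ (with $l=\kappa_+(\Re(zB))$) and invokes the standard fact that an $n\times n$ matrix with a zero $r\times s$ block satisfying $r+s>n$ is singular.
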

\begin{proof}[Proof]
(i) Let  $z \in \mathbb{C}$ be admissible, i.e. the matrix
$\Re(zB)$ is nonsingular. Then  the matrix $T_{zB}(C,D)$ exists
and  has $l$ columns from $C$ and $n-l$ columns from $D$. By the
definition, $l=\kappa_+(\Re(zB))$. Further,  since the last $n-k$
rows of the matrix $C$ and the first $k$ rows of the  matrix $D$
are zero, the matrix $T_{zB}(C,D)$ has at least two zero
submatrices of sizes $(n-k) \times l$ and $k \times (n-l)$. Since
$\det T_{zB}(C,D)\not = 0$, one has $n-k+l \le n$ and $k+n-l \le
n$. Hence $k=l$.

(ii) Let the boundary conditions  be regular and $\det\Re(zB)\not
= 0.$  Then both  matrices $T_{zB}(C,D)$ and $T_{-zB}(D,C)$ are
well-defined  and nonsingular. By the statement (i),
$k=\kappa_+(\Re(zB))$ and $k=\kappa_+(\Re(-zB))$. Since
$\kappa_+(\Re(-zB))=\kappa_-(\Re(zB))$, one has
$\kappa_+(\Re(zB))=\kappa_-(\Re(zB))$ and
$2k=\kappa_+(\Re(zB))+\kappa_-(\Re(zB))=n$.
   \end{proof}
    \begin{example}\label{ex.k.k+1}
Let $n=2k+1$, $B=\diag(b_1, \ldots, b_n)$ with
$b_j=\exp\left(\frac{2\pi i j}{n}\right),\ j\in\{1,2,\ldots,n\},$
and let BC~\eqref{1.3} split in $k$ conditions at 0 and $k+1$
conditions at $1$. Then the lines $\{z\in\mathbb{C}:\Re(i z
b_j)=0\}$ divides $\mathbb{C}$ in $2n$ sectors
$\sigma_1,\sigma_2,\ldots,\sigma_{2n}$ such that the point $iz_p$
belongs to the interior of $\ \sigma_p,\ p\in\{1,2,\ldots,2n\}$,
where $z_p=\exp\left(\frac{\pi i p}{n}\right)$. Note that for $p
\equiv k \pmod 2$ we have $\kappa_+(\Re(z_p B))=k+1$ and hence, by
Lemma~\ref{lem_reg}, the matrix $T_{z_p B}(C,D)$ is singular.

However, in general, for other values of $p$ the matrix $T_{z_p
B}(C,D)$ is nonsingular. More precisely, if $p \equiv k+1 \pmod 2$
then $\kappa_+(\Re(z_p B))=k$ and
\begin{equation}\label{TzBCD=CD}
\det T_{z_p B}(C,D) = C\left(\begin{array}{cccc}
  1 & 2 & \ldots & k \\
  j_1 & j_2 & \ldots & j_{k} \\
\end{array}\right)
\cdot D\left(\begin{array}{ccc}
  k+1  & \ldots & n \\
  j_{k+1} & \ldots & j_{n} \\
\end{array}\right)
\end{equation}
where $1 \le j_1<j_2<\ldots<j_k \le n$, $1 \le
j_{k+1}<j_{k+2}<\ldots<j_n \le n$, $\Re(z_p b_{j_{\nu}})>0$ for $1
\le \nu \le k$ and $\Re(z_p b_{j_{\nu}})<0$ for $k+1 \le \nu \le
n$. Here $ A\left(\begin{array}{cccc}
  j_1 & j_2 & \ldots & j_p \\
  k_1 & k_2 & \ldots & k_p \\
\end{array}\right)
$ stands for the  minor of $n \times m$-matrix $A = (a_{jk})$
composed of the entries in the  rows with the indices
$j_1,\ldots,j_p \in \{1,\ldots,n\}$ and the columns with the
indices $k_1,\ldots,k_p \in \{1,\ldots,m\}$.

Assume that for some values $p_1,p_2,p_3 \in \{1,2,\ldots,2n\}$
satisfying  $p_1<p_2<p_3$, $p_1 \equiv p_2 \equiv p_3 \equiv k+1
\pmod 2$, $p_2-p_1<n$, $p_3-p_2<n$ and $p_3-p_1>n$ the
corresponding minors of matrices $C$ and $D$ from
equality~\eqref{TzBCD=CD} for values $p=p_1,p_2,p_3$ are non-zero.
Then the boundary conditions~\eqref{1.3} will be weakly
$B$-regular if we put  $z_j=\exp\left(\frac{\pi i p_j}{n}\right),\
j\in \{1,2,3\},$ in Definition \ref{def1.1} of weak
$B$-regularity. However, by Lemma~\ref{lem_reg},  these boundary
conditions are irregular.

One obtains an explicit example by setting $n=3$ and
\begin{equation*}
   \begin{cases}
 c_{11}y_1(0)  + c_{12}y_2(0) + c_{13}y_3(0)&=0 \\
 d_{21}y_1(1)  + d_{23}y_3(1)&=0 \\
 d_{32}y_2(1)  + d_{33}y_3(1)&=0
   \end{cases}\ \
 \end{equation*}
where all the coefficients are non-zero. Here we can take $p_j =
2j$, $j\in \{1,2,3\}.$

We obtain another explicit example of irregular but weakly
$B$-regular splitting  boundary conditions \eqref{1.3} for  system
\eqref{1.1} with $n=2k+1$, by setting
$$
(C \ D)=\left(%
\begin{array}{cccccccc}
  1         & 1         & \ldots & 1         & 0      & 0      & \ldots & 0      \\
  c_1       & c_2       & \ldots & c_n       & 0      & 0      & \ldots & 0      \\
  \vdots    & \vdots    & \ddots & \vdots    & \vdots & \vdots & \ddots & \vdots \\
  c_1^{k-1} & c_2^{k-1} & \ldots & c_n^{k-1} & 0      & 0      & \ldots & 0      \\
  0         & 0         & \ldots & 0         & 1      & 1      & \ldots & 1      \\
  0         & 0         & \ldots & 0         & d_1    & d_2    & \ldots & d_n    \\
  \vdots    & \vdots    & \ddots & \vdots    & \vdots & \vdots & \ddots & \vdots \\
  0         & 0         & \ldots & 0         & d_1^k  & d_2^k  & \ldots & d_n^k  \\
\end{array}%
\right).
$$
Here  $c_j\not = c_k$ and  $d_j\not = d_k$ for $j\not = k.$ Now
any $k \times k$-minor of the matrix $C$ that corresponds to its
first $k$ rows is the Vandermonde determinant,  hence it is
non-zero. The same  is true for any $(k+1) \times (k+1)$-minor of
the matrix $D$ that corresponds to its last $k+1$ rows. Hence
$\det T_{z_pB}(C,D) \ne 0$ for any $p \in \{1,2,\ldots,2n\}$ such
that $p \equiv k+1 \pmod 2$. So, we can take $p_1=2, p_2=4,
p_3=n+3$ for odd $k$  and  $p_1=1, p_2=3, p_3=n+2$ for even $k$.
 \end{example}

Next we present  two examples of \emph{non-splitting} boundary
conditions that are \emph{irregular but weakly $B$-regular}.
 \begin{example}
 Let $n=3$, $B=\diag(b_1, b_2, b_3)$ and $b_j=\exp\left(\frac{2\pi i j}{3}\right),\ j\in\{1,2,3\}$.
Consider the boundary conditions \eqref{1.3} of the form:
      \begin{equation*}
      \begin{cases}
 y_1(0)=d_{12}y_2(1)+d_{13}y_3(1) \\
 y_2(0)=d_{21}y_1(1)+d_{23}y_3(1) \\
 y_3(0)=d_{31}y_1(1)+d_{32}y_2(1)
       \end{cases},
       \end{equation*}
where all the coefficients $d_{jk}$  are non-zero. In this case,
the matrix $T_{zB}(C,D)$ is nonsingular for
$z=\exp\left(\frac{2\pi i j}{3}\right),\ j\in\{1,2,3\},$ but it is
singular for $z=-\exp\left(\frac{2\pi i j}{3}\right),\
j\in\{1,2,3\}$. For instance, for $z=1$ we have
$$
\det T_{zB}(C,D)=\det T_{B}(C,D)=\det \left(%
\begin{array}{ccc}
  0      & d_{12} & 0 \\
  d_{21} & 0      & 0 \\
  d_{31} & d_{32} & 1 \\
\end{array}%
\right) = -d_{12}d_{21} \ne 0.
$$
At the same time,  for $z=-1$ one has
$$
\det T_{zB}(C,D)=\det T_{-B}(C,D)=\det \left(%
\begin{array}{ccc}
  1 & 0 & d_{13} \\
  0 & 1 & d_{23} \\
  0 & 0 & 0 \\
\end{array}%
\right) = 0.
$$
 \end{example}

\begin{example}\label{ex.3.3}
Let $n=3$, $B=\diag(b_1, b_2, b_3)$ and $b_j=\exp\left(\frac{2\pi
i j}{3}\right),\ j\in\{1,2,3\}$. Consider boundary conditions
\eqref{1.3} of the form:
\begin{equation*}
c_1 y_1(0)=c_2 y_2(0)=c_3 y_3(0)=d_1 y_1(1)+d_2 y_2(1)+d_3 y_3(1),
\end{equation*}
where all the coefficients are non-zero. In this case, the matrix
$T_{zB}(C,D)$ is nonsingular for $z=-\exp\left(\frac{2\pi i
j}{3}\right),\ j\in\{1,2,3\},$ but it is singular for
$z=\exp\left(\frac{2\pi i j}{3}\right),\ j\in\{1,2,3\}$. For
instance, for $z=-1$ we have
$$
\det T_{zB}(C,D)=\det T_{-B}(C,D)=\det \left(%
\begin{array}{ccc}
  c_1 & 0   & d_3 \\
  0   & c_2 & d_3 \\
  0   & 0   & d_3 \\
\end{array}%
\right) = c_1 c_2 d_3 \ne 0.
$$
On the other hand,  for $z=1$
$$
\det T_{zB}(C,D)=\det T_{B}(C,D)=\det \left(%
\begin{array}{ccc}
  d_1 & d_2 & 0 \\
  d_1 & d_2 & 0 \\
  d_1 & d_2 & c_3 \\
\end{array}%
\right) = 0.
$$
 \end{example}

\section{The case of a selfadjoint matrix $B=B^*$}

Suppose that $B = B^*\in {\Bbb C}^{n\times n}$ and $\det B\not =
0.$ To state the next result, we denote by $P_+$ and $P_-$ the
spectral projectors onto "positive"\ and "negative"\ parts of the
spectrum of a selfadjoint matrix $B=B^*$, respectively, and put
       \begin{equation}\label{2.2T+}
T_{\pm} := T_{\pm}(B;C,D):= CP_{\pm}  + DP_{\mp}.
       \end{equation}
%
%
        \begin{proposition} \label{th4}
Assume  that $B=B^*$ and  $Q\in L^2[0,1]\otimes {\Bbb C}^{n\times
n}$. If
\begin{equation}\label{2.4}
\det T_+(B;C,D)\ne 0 \qquad\text{and}\qquad \det T_-(B;C,D)\ne 0,
\end{equation}
then the system of EAF of the operator $L_{C,D}$ is complete and
minimal in the space $L^2[0,1] \otimes \mathbb{C}^n$.
 \end{proposition}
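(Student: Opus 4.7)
The plan is to deduce Proposition \ref{th4} directly from Theorem \ref{th2.1} by verifying that, in the selfadjoint case $B=B^*$, the two nondegeneracy conditions \eqref{2.4} force the boundary conditions \eqref{1.3} to be weakly $B$-regular in the sense of Definition \ref{def1.1}. Once this is done, completeness and minimality follow immediately from Theorem \ref{th2.1}.

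The key computation is to identify $T_{zB}(C,D)$ with $T_\pm(B;C,D)$ for an appropriate half-plane of parameters $z$. Since $B=B^*$, all eigenvalues $b_k$ are real, and therefore $\Re(zb_k) = \Re(z)\,b_k$. Consequently, whenever $\Re(z)\ne 0$, the sign of $\Re(zb_k)$ depends only on the sign of $\Re(z)$ and the sign of $b_k$. Reading off the definition of $T_A(C,D)$ given in the introduction, this yields
\begin{equation*}
T_{zB}(C,D) = \begin{cases} CP_+ + DP_- = T_+(B;C,D), & \Re(z) > 0, \\ CP_- + DP_+ = T_-(B;C,D), & \Re(z) < 0. \end{cases}
\end{equation*}
In particular, every $z\in\mathbb{C}$ with $\Re(z)\ne 0$ is admissible (since $\Re(zB)=\Re(z)\,B$ is nonsingular), and $\det T_{zB}(C,D)\ne 0$ for \emph{every} such $z$ under the hypothesis \eqref{2.4}.

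Next, I would exhibit three explicit points witnessing weak $B$-regularity. Take, for instance, $z_1 := 1$, $z_2 := -1 + i\delta$ and $z_3 := -1 - i\delta$ with $\delta>0$ small. All three have nonzero real parts, so by the displayed formula $\det T_{z_1B}(C,D) = \det T_+(B;C,D) \ne 0$ and $\det T_{z_jB}(C,D) = \det T_-(B;C,D) \ne 0$ for $j=2,3$. An elementary check shows that the origin lies in the interior of the triangle $\triangle_{z_1z_2z_3}$ (the edge $z_2z_3$ is the segment of the line $\Re w = -1$, while the origin sits strictly between this edge and the vertex $z_1=1$). Thus conditions (a) and (b) of Definition \ref{def1.1} are satisfied.

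Having established weak $B$-regularity, the completeness and minimality of the system of root functions of $L_{C,D}$ in $L^2[0,1]\otimes\mathbb{C}^n$ is precisely the conclusion of Theorem \ref{th2.1}. There is no genuine obstacle here; the entire content of the proposition is the reduction $\det T_\pm(B;C,D)\ne 0 \Longrightarrow $ weak $B$-regularity, which is transparent because the real spectrum of $B$ collapses the family of matrices $\{T_{zB}(C,D):\Re(z)\ne 0\}$ to just the two matrices $T_\pm(B;C,D)$.
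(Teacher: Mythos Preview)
Your proof is correct and follows essentially the same route as the paper: reduce to Theorem~\ref{th2.1} by showing that, for $B=B^*$, the family $\{T_{zB}(C,D):\Re z\neq 0\}$ collapses to the two matrices $T_\pm(B;C,D)$, so the hypotheses~\eqref{2.4} immediately yield weak $B$-regularity. The paper packages this reduction through Corollary~\ref{cor2.1} (taking $z=1$ and noting $T_+(B;C,D)=T_B(C,D)$, $T_-(B;C,D)=T_{-B}(C,D)$), whereas you go directly to Definition~\ref{def1.1} with an explicit triple $z_1=1$, $z_2=-1+i\delta$, $z_3=-1-i\delta$; the content is identical. One small notational remark: the diagonal entries of $B$ are $b_k^{-1}$ rather than $b_k$, so strictly speaking $\Re(zB)$ has entries $b_k^{-1}\Re z$, but since $b_k$ and $b_k^{-1}$ share the same sign this does not affect your argument.
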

 \begin{proof}[Proof]
To prove the completeness, it suffices to note that
\begin{equation*}
T_+(B;C,D)=T_{B}(C,D) \qquad\text{and}\qquad T_-(B;C,D)=T_{B}(D,C)
\end{equation*}
and to put $z=1$ in Corollary~\ref{cor2.1}.
 \end{proof}

Next we clarify  Proposition \ref{th4} for accumulative
(dissipative) BVP.
Recall  that an operator $T$ in a Hilbert space $\frak H$ is
called accumulative (dissipative) whenever
  $$
\im( Tf, f)\le 0\ (\ge 0), \qquad f\in \dom(T).
  $$
%
%

   \begin{lemma}\label{lem_dissip}
Let $B=B^*$ and let the operator $L_{C,D}(0)$ be accumulative
(dissipative). Then $\det T_+(B;C,D)\ne 0$ $(\det T_-(B;C,D)\ne
0).$
\end{lemma}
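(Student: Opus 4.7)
The plan is to reduce the algebraic statement about $T_{\pm}(B;C,D)$ to a quadratic-form inequality at the endpoints via integration by parts, and then to construct a convenient test function that forces any non-trivial element of $\ker T_+$ (respectively $\ker T_-$) to violate the accumulativity (respectively dissipativity) hypothesis.

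First I would carry out the standard Green's identity computation. Because $B=B^{*}$, for any $f\in\dom(L_{C,D}(0))=\{f\in W^1_2([0,1];\mathbb{C}^n):Cf(0)+Df(1)=0\}$ one has
\begin{equation*}
(L_{C,D}(0)f,f)-(f,L_{C,D}(0)f) = -i\int_0^1\tfrac{d}{dx}\langle Bf(x),f(x)\rangle\,dx
= -i\bigl[\langle Bf(1),f(1)\rangle-\langle Bf(0),f(0)\rangle\bigr],
\end{equation*}
which yields the key identity
\begin{equation*}
\im(L_{C,D}(0)f,f)=\tfrac{1}{2}\bigl[\langle Bf(0),f(0)\rangle-\langle Bf(1),f(1)\rangle\bigr].
\end{equation*}

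Next, assume $L_{C,D}(0)$ is accumulative and, toward a contradiction, that $\det T_+(B;C,D)=\det(CP_++DP_-)=0$. Pick a non-zero $v\in\mathbb{C}^n$ with $(CP_++DP_-)v=0$ and set $v_+:=P_+v$, $v_-:=P_-v$, so that $Cv_++Dv_-=0$. I would then take the explicit linear interpolant $f(x):=(1-x)v_++xv_-$, which lies in $W^1_2([0,1];\mathbb{C}^n)$ and satisfies $Cf(0)+Df(1)=Cv_++Dv_-=0$, hence $f\in\dom(L_{C,D}(0))$. Evaluating the quadratic form at the endpoints, and using that $B$ is strictly positive on $\ran P_+$ and strictly negative on $\ran P_-$ (since $\det B\ne 0$), we get
\begin{equation*}
\langle Bf(0),f(0)\rangle-\langle Bf(1),f(1)\rangle
=\langle Bv_+,v_+\rangle-\langle Bv_-,v_-\rangle\ge 0,
\end{equation*}
with equality only if $v_+=v_-=0$, i.e.\ $v=0$. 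The accumulativity assumption forces the left-hand side to be $\le 0$, so we must have $v=0$, contradicting the choice of $v$. Hence $\det T_+(B;C,D)\ne 0$.

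For the dissipative case the argument is symmetric: if $\det T_-(B;C,D)=\det(CP_-+DP_+)=0$, choose a non-zero $v$ with $CP_-v+DP_+v=0$ and interpolate between $f(0)=v_-$ and $f(1)=v_+$; then $\langle Bf(0),f(0)\rangle-\langle Bf(1),f(1)\rangle=\langle Bv_-,v_-\rangle-\langle Bv_+,v_+\rangle\le 0$ with equality only if $v=0$, contradicting $\im(L_{C,D}(0)f,f)\ge 0$. The only mildly delicate point is choosing the interpolating $f$ correctly so that both (a) the boundary condition of $L_{C,D}(0)$ is satisfied and (b) the endpoint values realise precisely the spectral components $v_\pm$; the linear interpolant handles both simultaneously, so no serious obstacle remains.
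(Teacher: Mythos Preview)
Your argument is correct and essentially the same as the paper's: both use Green's identity to express $\im(L_{C,D}(0)f,f)$ as the boundary form $\tfrac12[\langle Bf(0),f(0)\rangle-\langle Bf(1),f(1)\rangle]$, and both observe that a non-trivial element of $\ker T_+$ yields boundary data $(v_+,v_-)$ with $P_+v_+=v_+$, $P_-v_-=v_-$ on which this form is strictly positive, contradicting accumulativity. The paper packages the second step in the language of the indefinite form $\langle\widetilde B\,\cdot,\cdot\rangle$ on $\mathbb{C}^{2n}$ (with $\widetilde B=\diag(B,-B)$) and its positive subspace $\mathcal H_+$, whereas you build the interpolating test function explicitly; the mathematical content is identical.
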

     \begin{proof}[Proof]
Since the operator $L_{C,D}(0)$ is accumulative, one has
  \begin{equation}\label{4.2A}
2\im(L_{C,D}(0)y,y) = \langle B y(0),y(0) \rangle - \langle B
y(1),y(1)\rangle \le 0, \quad y \in \dom(L_{C,D}).
     \end{equation}
As in the proof of Corollary~\ref{cor2.1A}  we let $\widetilde{B}
:=\diag (B, -B) = \widetilde{B}^*$ and equip  the space ${\mathcal
H} =\mathbb{C}^n\oplus \mathbb{C}^n$  with the non-degenerate
Hermitian bilinear form \eqref{3.59A}.  Let also
$\beta_1,\beta_2,\dots, \beta_{2n}$ be the eigenvalues of
$\widetilde B$ and let $e_1, e_2,\dots, e_{2n}$ be the
corresponding  eigenvectors. We put ${\mathcal H}_{\pm} :=
\Span\{e_k:\ \pm\Re(\beta_k)>0 \}$ and note that $\dim{\mathcal
H}_{\pm}=n.$

Further, for any $y(\cdot) \in \dom(L_{C,D})$ the vector
$\varphi=\col(y(0),y(1))(\in \Ker(C\ D))$ is non-positive in
$\mathcal{H}$, i.e., $ \langle \widetilde{B} \varphi,\varphi
\rangle \le 0$. Hence $\Ker(C \ D) \subset \{\psi \in \mathcal{H}:
\langle \widetilde{B} \psi,\psi \rangle \le 0 \}$. On the other
hand, $\langle \widetilde{B} \psi,\psi \rangle \ge 0$ for any
$\psi \in \mathcal{H}_+.$  Hence $\Ker(C \ D) \cap \mathcal{H}_+ =
\{0\}$ and due to \eqref{3.59} $\det T_+(B;C,D)\ne 0$.
\end{proof}

   \begin{corollary}\label{cor2.2}
Assume that the operator $L_{C,D}(0)$ is accumulative
(dissipative), and $\det T_-(B;C,D)\ne 0$ $(\det T_+(B;C,D)\ne
0).$  Then both conditions \eqref{2.4} are satisfied and the
system of root functions of the operator $L_{C, D}(Q)$ with $Q\in
L^2[0,1]\otimes {\Bbb C}^{n\times n}$ is complete and minimal in
$L^2[0,1]\otimes\Bbb C^n$.
      \end{corollary}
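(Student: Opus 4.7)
The plan is to obtain the conclusion essentially by chaining together the two preceding results, Lemma~\ref{lem_dissip} and Proposition~\ref{th4}. The whole content of the corollary is that the one-sided nondegeneracy condition supplied by accumulativity (or dissipativity) of $L_{C,D}(0)$ can be paired with the hypothesis on the \emph{other} matrix $T_\mp(B;C,D)$ to recover the full pair of conditions \eqref{2.4}, after which completeness and minimality follow automatically.

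First I would handle the accumulative case. Since $L_{C,D}(0)$ is accumulative and $B=B^*$, Lemma~\ref{lem_dissip} yields $\det T_+(B;C,D)\ne 0$. Combining this with the standing hypothesis $\det T_-(B;C,D)\ne 0$ gives precisely both nondegeneracy conditions \eqref{2.4}. The dissipative case is symmetric: Lemma~\ref{lem_dissip} then provides $\det T_-(B;C,D)\ne 0$, and the hypothesis supplies $\det T_+(B;C,D)\ne 0$, so again \eqref{2.4} holds in full.

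Once both relations in \eqref{2.4} are established, I would simply invoke Proposition~\ref{th4}: under $B=B^*$ and $Q\in L^2[0,1]\otimes\mathbb{C}^{n\times n}$, the validity of \eqref{2.4} implies that the system of eigenfunctions and associated functions of $L_{C,D}(Q)$ is complete and minimal in $L^2[0,1]\otimes\mathbb{C}^n$. (Equivalently, one can appeal directly to Corollary~\ref{cor2.1} with $z=1$, since $T_+(B;C,D)=T_{B}(C,D)$ and $T_-(B;C,D)=T_{-B}(C,D)=T_B(D,C)$.)

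There is no real obstacle here; the proof is a two-line assembly of earlier results, and the only point that deserves a remark is that the perturbation from $Q=0$ to an arbitrary $L^2$ potential $Q$ does not affect the conclusion, because the nondegeneracy conditions \eqref{2.4} depend only on $B$, $C$ and $D$, and Proposition~\ref{th4} is stated for arbitrary $Q\in L^2[0,1]\otimes\mathbb{C}^{n\times n}$. Thus the dissipativity/accumulativity hypothesis is used only at the level $Q=0$, purely as a convenient sufficient condition to check one of the two determinants in \eqref{2.4}.
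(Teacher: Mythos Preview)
Your proposal is correct and follows exactly the paper's own approach: the paper's proof is the single line ``Combining Lemma~\ref{lem_dissip} with Proposition~\ref{th4} yields the statement.'' Your additional remarks (the parenthetical about Corollary~\ref{cor2.1} and the observation about the role of $Q$) are accurate but go slightly beyond what the paper records.
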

       \begin{proof}[Proof]
Combining  Lemma~\ref{lem_dissip} with  Proposition~\ref{th4}
yields the statement.
   \end{proof}
    \begin{corollary}\label{cor2.3}
Suppose that $C, D\in \Bbb C^{n \times n}$ satisfy both the
maximality condition \eqref{1.4} and the relation $CB^{-1}C^* -
DB^{-1} D^* = 0$. Then the system of root functions of the
operator $L_{C, D}(Q)$ is complete and minimal in $L^2[0,1]\otimes
\Bbb C^n$.
    \end{corollary}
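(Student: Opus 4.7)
The plan is to show that the hypothesis $CB^{-1}C^*-DB^{-1}D^*=0$ forces the operator $L_{C,D}(0)$ to be both accumulative and dissipative (in fact, self-adjoint), and then to invoke Lemma~\ref{lem_dissip} together with Proposition~\ref{th4}.

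First I would rewrite the hypothesis in terms of the boundary space $\mathcal{H}=\mathbb{C}^n\oplus\mathbb{C}^n$ equipped with the non-degenerate Hermitian form $w(\varphi,\psi)=\langle\widetilde{B}\varphi,\psi\rangle$ with $\widetilde{B}=\diag(B,-B)$, exactly as in the proof of Corollary~\ref{cor2.1A}. The identity $CB^{-1}C^*-DB^{-1}D^*=0$ is equivalent to $(C\ D)\widetilde{B}^{-1}(C\ D)^*=0$; that is, the $n$-dimensional row space $W$ of $(C\ D)$ is isotropic for the form $\langle\widetilde{B}^{-1}\cdot,\cdot\rangle$. The key linear-algebra step is to verify that this is equivalent to $\Ker(C\ D)$ being $\widetilde{B}$-isotropic, and hence (by the half-dimension supplied by~\eqref{1.4}) Lagrangian. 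In general, an $n$-dimensional subspace $V\subset\mathbb{C}^{2n}$ is Lagrangian for a non-degenerate Hermitian form $G$ precisely when $GV$ equals the standard orthogonal complement $V^{\perp}$; applying this both to $V=\Ker(C\ D)$ with $G=\widetilde{B}$ and to $V=W$ with $G=\widetilde{B}^{-1}$ produces the \emph{same} identity $\widetilde{B}\,\Ker(C\ D)=W$, since $W=\Ker(C\ D)^{\perp}$ in the standard inner product.

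Once this Lagrangian property of $\Ker(C\ D)$ is established, self-adjointness of $L_{C,D}(0)$ is immediate from the boundary identity~\eqref{4.2A}: for every $y\in\dom(L_{C,D})$ one has $2\im(L_{C,D}(0)y,y)=\langle\widetilde{B}\xi,\xi\rangle$ with $\xi=\col(y(0),y(1))\in\Ker(C\ D)$, and the right-hand side now vanishes. In particular $L_{C,D}(0)$ is simultaneously accumulative and dissipative, so Lemma~\ref{lem_dissip} applied in both roles delivers $\det T_+(B;C,D)\ne 0$ and $\det T_-(B;C,D)\ne 0$. Both conditions in~\eqref{2.4} are thus satisfied, and Proposition~\ref{th4} completes the proof.

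The only substantive step is the linear-algebra equivalence in the first paragraph, where a condition on the row space (isotropy under $\widetilde{B}^{-1}$) must be matched up with a condition on the kernel (isotropy under $\widetilde{B}$); the maximality condition~\eqref{1.4} is essential there, as it guarantees $\dim W=\dim\Ker(C\ D)=n$, so that ``isotropic'' coincides with ``maximally isotropic.'' Everything else is a direct application of results already established earlier in Section~4.
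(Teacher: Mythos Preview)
Your proposal is correct and follows essentially the same approach as the paper: show that the condition $CB^{-1}C^*-DB^{-1}D^*=0$ makes $L_{C,D}(0)$ self-adjoint (hence simultaneously accumulative and dissipative), then apply Lemma~\ref{lem_dissip} in both directions and conclude via Proposition~\ref{th4}. The paper compresses this into one line by citing Corollary~\ref{cor2.2} (whose proof is precisely Lemma~\ref{lem_dissip} plus Proposition~\ref{th4}), whereas you unpack the linear algebra behind the self-adjointness claim---the Lagrangian argument for $\Ker(C\ D)$ with respect to $\widetilde{B}$---which the paper leaves as a stated fact.
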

   \begin{proof}[Proof]
It follows from the assumptions of the corollary that the operator
$L_{C, D}(0)$ with $Q=0$ is selfadjoint. It remains to apply
Corollary \ref{cor2.2}.
    \end{proof}
      \begin{remark}\label{rem3.3}
$(i)$ In case of the $2\times 2$ Dirac system, a close problem on
completeness of matrix solutions in the space of matrix functions
is studied in~\cite{Mar77}. Moreover, conditions~\eqref{2.4} are
equivalent to conditions  (1.3.39) from~\cite{Mar77}.

$(ii)$ In case of the simplest operator $L_{C, D} = -iI_n\otimes
\frac{d}{dx}\ (B=I_n, Q=0)$,  another (and rather complicated)
proof of Corollary~\ref{cor2.2} was obtained in~\cite{Gin71}.

$(iii)$  Corollary~\ref{cor2.3} is implied by the known
M.V.~Keldysh theorem (\cite{Kel51}, \cite{GohKre65},
\cite{Markus86})  since the operator $L_{C, D}(0)$ of the
form~\eqref{1.1}, \eqref{1.3} with $Q=0$ is selfadjoint, and its
resolvent has a finite (equal to 1) order.
    \end{remark}

Next  we show that, in the case of  zero potential matrix,
$Q\equiv0$,  conditions~\eqref{2.4} of Proposition \ref{th4} are
also necessary.
\begin{proposition} \label{prop.neces}
The  system of root functions of the boundary problem
\begin{gather} \label{5.t5.1}
-iBy' = \lambda y, \quad B=B^*, \qquad y=\col(y_1,...,y_n), \\
 \label{5.t5.2} Cy(0) + Dy(1)=0,
  \end{gather}
is incomplete in $L^2[0,1] \otimes \mathbb{C}^n$ whenever $\det
T_-(B;C,D)=0.$ Moreover, in this case its defect is infinite.
        \end{proposition}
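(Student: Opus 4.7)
The plan is to exploit the vanishing of $\det T_-(B;C,D)$ as a missing leading exponential in the characteristic determinant, and then to invert the scheme of Theorem~\ref{th2.1} to produce an infinite-dimensional family of $L^2$ functions orthogonal to the root system.

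First, with $Q\equiv 0$ the fundamental matrix of \eqref{5.t5.1} with $\Phi(0;\lambda)=I_n$ is $\Phi(x;\lambda)=e^{i\lambda B^{-1}x}=\diag(e^{i\lambda x/\beta_k})_{k=1}^n$, where $\beta_1,\dots,\beta_n$ are the real nonzero eigenvalues of $B=B^*$. Multilinearity in the columns of $\det(C+D\Phi(1;\lambda))$ yields
\[
\Delta(\lambda)=\sum_{S\subset\{1,\dots,n\}} a_S\,e^{i\lambda\sigma_S},\qquad \sigma_S:=\sum_{k\in S}\beta_k^{-1},
\]
where $a_S$ is the determinant of the matrix whose $k$-th column equals $D_k$ for $k\in S$ and $C_k$ for $k\notin S$. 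Matching this prescription against $T_-(B;C,D)=CP_-+DP_+$, the extremal coefficient attached to $S_+:=\{k:\beta_k>0\}$ is exactly $a_{S_+}=\det T_-(B;C,D)$, while $\sigma_{S_+}=\sum_{\beta_k>0}\beta_k^{-1}=\max_S\sigma_S$. Thus the hypothesis $\det T_-(B;C,D)=0$ removes this extremal term, and the indicator of $\Delta$ at $\theta=-\pi/2$ drops from $\sigma_{S_+}$ to $\sigma':=\max\{\sigma_S:S\ne S_+,\,a_S\ne 0\}<\sigma_{S_+}$; set $\varepsilon:=\sigma_{S_+}-\sigma'>0$.

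Second, reverse the scheme of Theorem~\ref{th2.1}. For any $f\in L^2([0,1];\mathbb{C}^n)$ orthogonal to every root function of $L_{C,D}$, the quotients
\[
G_j(\lambda)=F_j(\lambda)/\Delta(\lambda),\qquad F_j(\lambda)=\sum_{k=1}^n\Delta_{jk}(\lambda)\int_0^1 e^{i\lambda x/\beta_k}\overline{f_k(x)}\,dx,
\]
remain entire, exactly as in part (i) of that proof. Re-running the exponential-type bookkeeping of part (iv), the estimate of $F_j$ combined with the deficient indicator of $\Delta$ at $\theta=-\pi/2$ now yields only $G_j(\lambda)=O(e^{\varepsilon|\lambda|})$ in the lower half-plane, while $G_j$ remains bounded in the upper half-plane. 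Consequently the Liouville/Phragm\'en--Lindel\"of step of the completeness proof breaks down: $G_j$ may be any entire function in the corresponding infinite-dimensional Paley--Wiener class with indicator $\le \varepsilon\max(-\Im\lambda,0)$.

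Third, convert this existence of nontrivial $G_j$ into an explicit infinite-dimensional family of $f$'s. As a prototype illustrating the mechanism, take $n=2$, $B=\diag(-1,1)$: then $\Delta(\lambda)=\det C+\det D+\beta e^{-i\lambda}$ with $\beta=\det T_+(B;C,D)$, its zeros form the arithmetic progression $\lambda_k=\lambda_0-2\pi k$ on a single horizontal line, and the one-dimensional kernels $\ker(C+D\Phi(1;\lambda_k))$ coincide with a fixed line $\mathbb{C}v$. The orthogonality of $f=\col(f_1,f_2)$ to every eigenfunction reduces to $v_1 I_1(\lambda_k)+v_2 I_2(\lambda_k)=0$ with $I_j(\lambda):=\int_0^1 e^{i\lambda x/\beta_j}\overline{f_j(x)}\,dx$ for every $k\in\mathbb{Z}$; by the Parseval identity applied to the Fourier series of $e^{\pm i\lambda_0 x}\overline{f_{1,2}(x)}$ on $[0,1]$, this is equivalent to the single pointwise relation $f_2(x)=c\,f_1(1-x)$ for an explicit constant $c=c(v,\lambda_0)$, leaving $f_1\in L^2[0,1]$ arbitrary---an infinite-dimensional family. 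The main obstacle is extending this Fourier-analytic construction to general $n$: one must extract the asymptotic distribution of $\{\lambda_k\}$ and the associated kernels from the reduced determinant (with $a_{S_+}=0$), and verify that the resulting constraints impose only finite codimension on $f\in L^2([0,1];\mathbb{C}^n)$, so that the orthogonal complement of the root system remains infinite-dimensional.
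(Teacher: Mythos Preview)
Your proposal has a genuine gap and is, as you essentially concede in the final paragraph, incomplete for general $n$.

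First, Step~2 is a non-argument. Showing that the Phragm\'en--Lindel\"of mechanism of Theorem~\ref{th2.1} ``breaks down'' proves nothing: the failure of one particular completeness proof does not establish incompleteness. The sentence ``$G_j$ may be any entire function in the corresponding infinite-dimensional Paley--Wiener class'' is suggestive but not a deduction---$G_j$ is \emph{determined} by $f$, and you must exhibit actual $f$'s, not argue that the obstruction to ruling them out has disappeared. Step~3 then starts over with a direct construction, but you carry it out only for $n=2$, $B=\diag(-1,1)$, and explicitly leave the general case as an ``obstacle'' requiring control of the zero distribution of the reduced determinant and of the associated kernel directions. That program is far harder than what the statement needs.

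The paper's argument avoids all of this by never looking at $\Delta(\lambda)$ or the spectrum at all. The key observation you are missing is that $\det T_-(B;C,D)=0$ means the rows of $T_-$ are dependent; after an admissible row operation on $(C\ D)$, one boundary condition takes the form
\[
\sum_{k=1}^n c_k\,y_k(\xi_k)=0,\qquad \xi_k=0\ \text{if}\ b_k>0,\quad \xi_k=1\ \text{if}\ b_k<0.
\]
Every root function satisfies this single condition, so it suffices to produce $f$'s orthogonal to \emph{all} solutions of $-iBy'=\lambda y$ obeying just this one constraint. For such solutions $y_k(x)=a_k e^{i|b_k|\lambda x}$ (after the substitution $x\mapsto 1-x$ when $b_k<0$) with $\sum_k c_k a_k=0$. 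Choosing $\varphi_k(x)=\overline{c_k}\,|b_k|$ on $[0,\alpha/|b_k|]$ and $0$ elsewhere (again with $x\mapsto 1-x$ for $b_k<0$) gives $(y_k,\varphi_k)_{L^2}=c_k a_k\int_0^\alpha e^{i\lambda t}\,dt$, hence $(Y,\Phi)=\bigl(\sum_k c_k a_k\bigr)\int_0^\alpha e^{i\lambda t}\,dt=0$. Varying the free parameter $\alpha\in(0,\min_k|b_k|)$ yields an infinite-dimensional family of such $\Phi$. This is a two-line construction valid for all $n$, with no spectral asymptotics and no Paley--Wiener analysis.
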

\begin{proof}[Proof]

Since $\det T_-(B;C,D)=0$,  one of the boundary conditions is of
the form
\begin{equation}\label{5.t5.4}
\sum_{k=1}^n c_ky_k(\xi;\lambda)=0,\qquad\text{where}\quad
\begin{cases}
\xi=0,\quad\text{for}\ b_k>0, \\
\xi=1,\quad\text{for}\ b_k<0.
\end{cases}
\end{equation}
Every solution $Y(x;\lambda)=\col(y_1(x;\lambda),\dots,
y_n(x;\lambda))$ of equation~\eqref{5.t5.1} satisfying
condition~\eqref{5.t5.4} admits the following representation
\begin{equation}\label{4.13}
y_k(x;\lambda)=
 \begin{cases}
  a_ke^{i |b_k|\lambda x},\quad\text{for}\ b_k>0, \\
  a_ke^{i |b_k|\lambda (1-x)},\quad\text{for}\
  b_k<0,
 \end{cases}
 \qquad\text{where} \quad  \sum_{k=1}^n c_ka_k=0.
\end{equation}
Let $\alpha$ be a positive number such that
$\tfrac{\alpha}{|b_k|}<1$ for all $k$. We put for $b_k>0$
\begin{equation*}
 \varphi_k(x)=
    \begin{cases}
      \overline{c_k}|b_k|,\quad\text{for}\ 0 \le x<\tfrac{\alpha}{|b_k|} \\
      0,\quad\text{for}\ \tfrac{\alpha}{|b_k|}\le x\le 1
    \end{cases},
\end{equation*}
for $b_k<0$
\begin{equation*}
 \varphi_k(x)=
    \begin{cases}
      \overline{c_k}|b_k|,\quad\text{for}\
      0\le 1-x < \tfrac{\alpha}{|b_k|} \\
      0,\quad\text{for}\ \tfrac{\alpha}{|b_k|} \le 1-x \le 1
    \end{cases}
\end{equation*}
and $\Phi(x) :=\col(\varphi_1(x)),\dots, \varphi_n(x)).$
From \eqref{4.13} one gets
\begin{multline*}
( y_k(x;\lambda),\ \varphi_k(x))_{L^2[0,1]}
 =\int_0^{\frac{\alpha}{|b_k|}} a_ke^{i\lambda |b_k|x} c_k|b_k|\, dx=
 c_ka_k \int_0^\alpha e^{i\lambda t}\, dt.
\end{multline*}
Here we use the change  $x\to 1-x$ for $b_k<0$.  It follows that
\begin{multline*}
\left( Y(x;\lambda),\ \Phi(x) \right)=
 \sum_{k=1}^n \left( y_k(x;\lambda),\ \varphi_k(x) \right)_{L^2[0,1]}
 =\left(\sum_{k=1}^n c_ka_k\right) \int_0^\alpha e^{i\lambda t}\, dt=0.
\end{multline*}
Thus, $\Phi(\cdot)$ is orthogonal to all the solutions of
equation~\eqref{5.t5.1} satisfying  condition  ~\eqref{5.t5.4}.
Hence it is orthogonal to the system of root functions of the
operator $L_{C,D}$. Thus,  the system of root functions of the
operator $L_{C,D}$ is incomplete.
 \end{proof}
\begin{remark}
All the results of this section including Theorem~\ref{th2.1} and
Propositions \ref{th4} and \ref{prop.neces} remain valid  (with
the same  proofs) for $Q\in L^1[0,1] \otimes \mathbb{C}^{n\times
n}.$ We stated  them for $Q\in L^2[0,1] \otimes
\mathbb{C}^{n\times n}$ because only in this case the domain
$\dom(L_{C,D}(Q))$ has simple description \eqref{1.3BB}. Moreover,
the results on completeness remain valid for the spaces $L^p[0,1]
\otimes \mathbb{C}^n$ with $p \in [1,\infty).$
\end{remark}

\section{Irregular BVP for $2\times 2$ Dirac type systems}

\subsection{Sufficient conditions of completeness}

Here we substantially supplement Proposition ~\ref{th4} confining
ourselves to the case of the second order system $(n=2)$. We
consider irregular BC and  indicate other completeness conditions
that \emph{depend} on $Q$. In particular, we show that, as
distinct from the case $Q(\cdot)\equiv0,$ conditions~\eqref{2.4}
of Proposition~\ref{th4} are not necessary for the completeness of
the system of root functions even in the case of
$Q(\cdot)=Q^*(\cdot)\not\equiv0$ and dissipative (accumulative)
boundary conditions.

Consider the $2\times 2$ Dirac type system:
    \begin{equation}\label{3.1}
-i B y'+Q(x)y=\lambda y, \qquad y=\col(y_1,y_2), \qquad x\in[0,1],
    \end{equation}
where
    \begin{equation}\label{3.2}
B=\diag(b^{-1}_1,b^{-1}_2), \quad b_1< 0< b_2\quad \text{and}\quad
Q=
\begin{pmatrix}
0&Q_{12}\\
Q_{21}&0
\end{pmatrix}.
         \end{equation}
To the system~\eqref{3.1}  we join  boundary
conditions~\eqref{1.3} rewritten for convenience in the form
    \begin{equation}\label{3.3}
U_j(y) := a_{j 1}y_1(0) + a_{j 2}y_2(0) + a_{j 3}y_1(1) + a_{j
4}y_2(1)= 0, \quad  j\in\{1,2\}.
    \end{equation}
Further, let $\Phi(x;\lambda)$ be the fundamental matrix of the
system~\eqref{3.1} (uniquely) determined  by the initial condition
$\Phi(0;\lambda)=I_2$, i.e.,
    \begin{equation*}
\Phi(x;\lambda) :=
\begin{pmatrix}
\Phi_1(x;\lambda)&  \Phi_2(x;\lambda)\end{pmatrix}, \quad
\Phi_j(x;\lambda) :=
\begin{pmatrix}
\varphi_{1j}(x;\lambda) \\
\varphi_{2j}(x;\lambda)
\end{pmatrix},
\quad j\in\{1,2\},
    \end{equation*}
where   $\displaystyle \Phi_1(0;\lambda) := \binom{1}{0},\
\Phi_2(0;\lambda) = \binom{0}{1}$. The eigenvalues of
problem~\eqref{3.1}--\eqref{3.3} are the roots of the
characteristic equation $\Delta(\lambda) := \det U(\lambda)=0$,
where
    \begin{equation}\label{3.3A}
U(\lambda):=
    \begin{pmatrix}
U_1(\Phi_1(x;\lambda))& U_1(\Phi_2(x;\lambda)) \\
U_2(\Phi_1(x;\lambda))& U_2(\Phi_2(x;\lambda))
    \end{pmatrix} =:
        \begin{pmatrix}
u_{11}(\lambda)& u_{12}(\lambda) \\
u_{21}(\lambda)& u_{22}(\lambda)
    \end{pmatrix}.
               \end{equation}
By putting $J_{jk}=\det
\begin{pmatrix}
a_{1j}&a_{1k}\\
a_{2j}&a_{2k}
\end{pmatrix},
\  j,k\in\{1,\ldots,4\}$,  we arrive at the following expression
for the characteristic determinant:
   \begin{equation}\label{3.5}
\Delta(\lambda) = J_{12} + J_{34}e^{i(b_1+b_2)\lambda} +
J_{32}\varphi_{11}(\lambda) + J_{13}\varphi_{12}(\lambda) +
J_{42}\varphi_{21}(\lambda) + J_{14}\varphi_{22}(\lambda),
   \end{equation}
where $\varphi_{jk}(\lambda) := \varphi_{jk}(1;\lambda)$. If $Q=0$
then $\varphi_{12}(x;\lambda) = \varphi_{21}(x;\lambda) = 0$, and
the characteristic determinant $\Delta_0(\cdot)$ has the form
   \begin{equation}\label{3.5A}
\Delta_0(\lambda) = J_{12} + J_{34}e^{i(b_1+b_2)\lambda} +
J_{32}e^{ib_1\lambda} + J_{14}e^{ib_2\lambda}.
   \end{equation}
For the problem~\eqref{3.1}--\eqref{3.2} we have $\det(T_+) =
J_{32}$ and  $\det(T_-) = J_{14}$ where $T_{\pm}$ are defined by
\eqref{2.2T+}.  Thus, condition~\eqref{2.4} means that
$J_{32}\cdot J_{14} = \det(T_1\cdot T_2)\not =0$ and presents
\emph{the regularity condition of
problem}~\eqref{3.1}--\eqref{3.3}. For the Dirac system $(-b_1 =
b_2=1)$, \emph{the regularity condition} is stronger than
\emph{the nondegeneracy} of boundary conditions; the last one
means that $\Delta_0(\lambda) \not = J_{12} + J_{34}= \const$.

  \begin{theorem}\label{th3.1}
Let $Q_{12}(\cdot), Q_{21}(\cdot) \in C[0,1]$.  If
\begin{gather}
  \label{3.7A} \left|J_{32}\right|+\left|b_1 J_{13}Q_{12}(0)+b_2 J_{42}Q_{21}(1)\right| \ne 0, \\
  \label{3.7B} \left|J_{14}\right|+\left|b_1 J_{13}Q_{12}(1)+b_2 J_{42}Q_{21}(0)\right| \ne 0,
 \end{gather}
then the system of root functions of
problem~\eqref{3.1}--\eqref{3.3} (i.e. of the operator
$L_{C,D}(Q)$) is complete and minimal in $L^2\left([0,1];{\Bbb
C}^2\right)$.
   \end{theorem}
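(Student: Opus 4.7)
The plan is to mimic the proof of Theorem~\ref{th2.1} step by step, replacing the appeal to weak regularity of the boundary conditions by refined asymptotic estimates of the characteristic determinant $\Delta(\lambda)$ in which the subleading terms induced by $Q$ compensate for the possible vanishing of $J_{32}$ or $J_{14}$.

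First I would derive sharp asymptotic expansions for the entries $\varphi_{jk}(1;\lambda)$ of the fundamental matrix, valid along any ray bounded away from the real axis. The triangular transformation operators for $2\times 2$ Dirac type systems from~\cite{Mal99} provide a representation $\Phi(x;\lambda) = \Phi_0(x;\lambda) + \int_0^x K(x,t)\Phi_0(t;\lambda)\,dt$ whose kernel $K(\cdot,\cdot)$ has enough regularity to justify one integration by parts even when $Q$ is only continuous. Combining this with variation of constants yields
\begin{equation*}
\varphi_{jj}(1;\lambda) = e^{ib_j\lambda}\bigl(1 + O(|\lambda|^{-1})\bigr),\quad j\in\{1,2\},
\end{equation*}
\begin{equation*}
\varphi_{12}(1;\lambda) = \frac{b_1\bigl(Q_{12}(0)e^{ib_1\lambda} - Q_{12}(1)e^{ib_2\lambda}\bigr)}{(b_2-b_1)\lambda} + o(|\lambda|^{-1})\bigl(e^{ib_1\lambda}+e^{ib_2\lambda}\bigr),
\end{equation*}
\begin{equation*}
\varphi_{21}(1;\lambda) = \frac{b_2\bigl(Q_{21}(1)e^{ib_1\lambda} - Q_{21}(0)e^{ib_2\lambda}\bigr)}{(b_2-b_1)\lambda} + o(|\lambda|^{-1})\bigl(e^{ib_1\lambda}+e^{ib_2\lambda}\bigr).
\end{equation*}
Substituting into \eqref{3.5} rearranges $\Delta(\lambda)$ as
\begin{equation*}
\Delta(\lambda) = A_1(\lambda)e^{ib_1\lambda} + A_2(\lambda)e^{ib_2\lambda} + J_{34}e^{i(b_1+b_2)\lambda} + J_{12},
\end{equation*}
with
\begin{equation*}
A_1(\lambda) = J_{32} + \frac{b_1J_{13}Q_{12}(0) + b_2J_{42}Q_{21}(1)}{(b_2-b_1)\lambda} + o(|\lambda|^{-1}),
\end{equation*}
\begin{equation*}
A_2(\lambda) = J_{14} - \frac{b_1J_{13}Q_{12}(1) + b_2J_{42}Q_{21}(0)}{(b_2-b_1)\lambda} + o(|\lambda|^{-1}).
\end{equation*}
Hypotheses \eqref{3.7A} and \eqref{3.7B} then yield the lower bounds $|A_j(\lambda)|\ge c/|\lambda|$ on such sectors for $|\lambda|$ large.

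Next I would fix three rays $l_1, l_2, l_3$ through the origin such that $0$ lies in the interior of the triangle they determine and none of them lies on the real axis; for instance $\arg\lambda \in \{\pi/3,\,2\pi/3,\,-\pi/2\}$. Along a ray in the upper half plane $|e^{ib_1\lambda}|$ dominates the other exponentials in $\Delta$, so the estimate on $A_1$ gives $|\Delta(\lambda)| \ge c|\lambda|^{-1}e^{|b_1|\,|\Im\lambda|}$; along a ray in the lower half plane the analogous bound via $A_2$ holds. Retracing steps~(i)--(iv) of the proof of Theorem~\ref{th2.1}, I would form the entire functions $U_j(x;\lambda)$, $F_j(\lambda)$ and $G_j(\lambda) = F_j(\lambda)/\Delta(\lambda)$, and bound $|F_j(\lambda)|$ by $o(e^{\beta_m t})$ along each $l_m$ via Cauchy--Schwarz as in \eqref{3.37}. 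The extra factor $|\lambda|^{-1}$ from the lower bound on $\Delta$ is absorbed into this $o$-estimate, so $G_j(\lambda)\to 0$ along each $l_m$. Since each of the three sectors cut out by $l_1, l_2, l_3$ has opening strictly less than $\pi$, the Phragm\'en--Lindel\"of principle and Liouville's theorem force $G_j\equiv 0$; steps~(v)--(vii) of the proof of Theorem~\ref{th2.1} then apply verbatim to conclude $f = 0$ and minimality.

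The hardest part is the first step: obtaining the $1/\lambda$ subleading terms of $\varphi_{12}, \varphi_{21}$ under the mild regularity $Q\in C[0,1]$. A na\"\i ve integration by parts would require $Q\in C^1$, and this is precisely where the triangular transformation operators of~\cite{Mal99} do the heavy lifting, extracting the boundary values $Q_{jk}(0), Q_{jk}(1)$ in the expansion without differentiating $Q$ itself. The only structural departure from the proof of Theorem~\ref{th2.1} is that the lower bound on $\Delta(\lambda)$ now decays like $|\lambda|^{-1}$ rather than being bounded below by a positive constant; this polynomial loss is negligible against the exponential factor $e^{|b_j|\,|\Im\lambda|}$ along each chosen ray.
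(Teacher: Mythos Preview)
Your approach is essentially the paper's own: both arguments use the triangular transformation operators from~\cite{Mal99} (whose kernels are $C^1$ when $Q\in C[0,1]$) to extract the $1/\lambda$ subleading terms in $\Delta(\lambda)$, obtain the lower bound $|\Delta(\lambda)|\ge c|\lambda|^{-1}e^{|b_\mp|\,|\Im\lambda|}$ from hypotheses~\eqref{3.7A}--\eqref{3.7B}, and then run the Phragm\'en--Lindel\"of argument on the ratios $G_j$. The paper works in the two half-planes $\Omega^\pm_\varepsilon$ and records the kernel values explicitly as Lemma~\ref{lem3.2}; you instead follow the three-ray template of Theorem~\ref{th2.1}, but this is only a cosmetic difference.

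There is, however, one genuine inaccuracy. You assert that the extra $|\lambda|^{-1}$ in the lower bound for $\Delta$ ``is absorbed into this $o$-estimate, so $G_j(\lambda)\to 0$ along each $l_m$.'' This is false: the Cauchy--Schwarz bound~\eqref{3.37} gives $\widetilde F_j(\zeta_m t)=O(t^{-1/2})e^{\beta_m t}$, so against $|\Delta|\ge c\,t^{-1}e^{\beta_m t}$ you only get $|G_j(\zeta_m t)|=O(t^{1/2})=o(|\lambda|)$, not $o(1)$. The correct conclusion from Phragm\'en--Lindel\"of (applied to $G_j(\lambda)/(\lambda-a)$ in each sector of opening $<\pi$) is that $G_j$ is globally $o(|\lambda|)$, hence a constant --- not that $G_j\equiv 0$. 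The paper runs into exactly the same issue and does not resolve it in-line either; it writes ``we conclude that $G_j(\cdot;f)=\const$'' and then invokes the technique of~\cite{Mal08} for the separate step forcing the constant to vanish. You should do the same: once $G_j$ is known to be constant, the relations $(\Phi_k(\cdot;\lambda),f)$ become $\lambda$-independent and can be shown to vanish by sending $\lambda\to\infty$ along a suitable ray. Only then do steps~(v)--(vii) of Theorem~\ref{th2.1} apply.
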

      \begin{corollary}\label{cor5.2}
Let $Q_{12}(\cdot), Q_{21}(\cdot) \in C[0,1]$,  and let
$J_{32}=J_{14}=0$.  If
   \begin{gather}
  \label{3.7AA}  b_1 J_{13}Q_{12}(0)+b_2 J_{42}Q_{21}(1) \ne 0, \\
  \label{3.7BB}  b_1 J_{13}Q_{12}(1)+b_2 J_{42}Q_{21}(0) \ne 0,
 \end{gather}
then the system of root functions of
problem~\eqref{3.1}--\eqref{3.3} is complete and minimal.
         \end{corollary}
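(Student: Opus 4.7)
The plan is to deduce Corollary \ref{cor5.2} as a direct specialization of Theorem \ref{th3.1}. Under the standing hypothesis $J_{32}=J_{14}=0$, the two-term conditions \eqref{3.7A} and \eqref{3.7B} of Theorem \ref{th3.1} collapse to
\[
|b_1 J_{13}Q_{12}(0)+b_2 J_{42}Q_{21}(1)|\ne 0, \qquad |b_1 J_{13}Q_{12}(1)+b_2 J_{42}Q_{21}(0)|\ne 0,
\]
which are precisely the assumptions \eqref{3.7AA} and \eqref{3.7BB} of the corollary. Thus both hypotheses of Theorem \ref{th3.1} are met, and the conclusion—completeness and minimality of the system of root functions of $L_{C,D}(Q)$ in $L^2([0,1];\mathbb{C}^2)$—transfers verbatim.

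It is worth emphasizing why this corollary is not vacuous. The condition $J_{32}=J_{14}=0$ describes the most severe failure of regularity: when it holds, the unperturbed characteristic determinant \eqref{3.5A} reduces to $\Delta_0(\lambda)=J_{12}+J_{34}e^{i(b_1+b_2)\lambda}$, and the boundary conditions are not even nondegenerate in the sense introduced after \eqref{3.5A} when additionally $J_{34}=0$. The assertion is then that the purely $Q$-dependent terms $J_{13}\varphi_{12}(\lambda)+J_{42}\varphi_{21}(\lambda)$ appearing in \eqref{3.5} are by themselves sufficient to drive completeness, provided only that the boundary values $Q_{12}(0),Q_{12}(1),Q_{21}(0),Q_{21}(1)$ satisfy the two linear non-vanishing conditions \eqref{3.7AA}--\eqref{3.7BB}. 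No new technique is needed for the corollary itself; the entire analytic content, in particular the asymptotic analysis of $\varphi_{12}$ and $\varphi_{21}$ via the triangular transformation operators of \cite{Mal99} and the ensuing growth estimates on $\Delta(\lambda)$ along the rays associated with weak $B$-regularity, is already encapsulated in the proof of Theorem \ref{th3.1}.
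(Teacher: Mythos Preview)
Your proposal is correct and matches the paper's approach exactly: the paper states Corollary~\ref{cor5.2} immediately after Theorem~\ref{th3.1} without a separate proof, precisely because setting $J_{32}=J_{14}=0$ in \eqref{3.7A}--\eqref{3.7B} reduces them to \eqref{3.7AA}--\eqref{3.7BB}. Your additional commentary on why the corollary is not vacuous is accurate and helpful, but not required for the deduction itself.
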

   \begin{remark}
(i) In the case $-b_1=b_2=1$, Theorem~\ref{th3.1} gives
completeness even in the case of degenerated boundary conditions.

(ii) If $J_{32}=J_{14}=0$, $Q_{12}(\cdot)=Q_{21}(\cdot)$ and
$Q_{12}(0)=Q_{12}(1)\not=0$, then
conditions~\eqref{3.7A}--\eqref{3.7B} acquire the form $b_1
J_{13}+ b_2 J_{42}\not =0$ not depending on $Q$.

(iii) If the BC are $y_1(0)=y_1(1)=0$, then
conditions~\eqref{3.7A}--\eqref{3.7B} acquire a simple form
$Q_{12}(0) \cdot Q_{12}(1) \ne 0$ not depending on $Q_{21}$. In
this case the system of root functions of the unperturbed operator
$L_{C,D}(0)$ (with $Q=0$) is incomplete.
       \end{remark}
To prove this theorem, we use the transformation operators
existing for general systems of the form~\eqref{1.1} with $B=B^*$
due to~\cite[Theorem 1]{Mal99}.

   \begin{lemma}\cite{Mal99}
Assume that $e_{\pm}(\cdot;\lambda)$ are solutions of the
system~\eqref{3.1} corresponding to the initial conditions
$e_+(0;\lambda)=\binom{1}{1}, \quad e_-(0;\lambda)=\binom{1}{-1}$.
Then $e_{\pm}(\cdot;\lambda)$ admit the representations
    \begin{equation}\label{3.8}
e_{\pm}(x;\lambda) = (I+K_{\pm})e^0_{\pm}(x;\lambda) =
e^0_{\pm}(x;\lambda) + \int^x_0 K_{\pm}(x,t)
e^0_{\pm}(t;\lambda)dt,
    \end{equation}
where
  \begin{equation*}
e^0_{\pm}(x;\lambda)=\binom{e^{ib_1\lambda x}}{\pm e^{ib_2\lambda
x}}, \qquad K_{\pm}(x,t)=\bigl(K^{\pm}_{ij}(x,t)\bigr)^2_{i,j=1},
    \end{equation*}
and $K^{\pm}_{ij}(\cdot,\cdot)\in W^1_1(\Omega),\  \Omega=\{0\le
t\le x\le 1\}$. Moreover, $K^{\pm}_{ij}\in C^1(\Omega)$ if $Q\in
C(\Omega)\otimes{\Bbb C}^{2\times 2}$.
  \end{lemma}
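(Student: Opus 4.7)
The plan is to convert the construction of $K_\pm$ into a Goursat problem on $\Omega$, reduce that problem to a coupled system of Volterra integral equations along characteristics, and solve by Picard iteration.

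First I would substitute the ansatz $e_\pm(x;\lambda) = e^0_\pm(x;\lambda) + \int_0^x K_\pm(x,t) e^0_\pm(t;\lambda)\,dt$ into $-iB\partial_x e_\pm + Qe_\pm = \lambda e_\pm$. Using that $e^0_\pm$ solves the free equation, i.e.\ $\lambda e^0_\pm(t;\lambda) = -iB\partial_t e^0_\pm(t;\lambda)$, I would eliminate $\lambda$ inside the integral by integration by parts. Since $e^0_\pm(x;\lambda)$, the $\lambda$-independent value $e^0_\pm(0;\lambda)=\binom{1}{\pm 1}$, and integrals of the form $\int_0^x M(t)e^0_\pm(t;\lambda)\,dt$ are linearly independent as functions of $\lambda$, the resulting identity decouples into three conditions on $K_\pm$: the hyperbolic PDE
\begin{equation*}
iB\,\partial_x K_\pm + i(\partial_t K_\pm)\,B = Q(x)K_\pm \quad\text{in } \Omega,
\end{equation*}
the diagonal relation $[B,K_\pm(x,x)] = -iQ(x)$, and the side relation $K_\pm(x,0)B\binom{1}{\pm 1} = 0$. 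Because $Q$ has zero diagonal, the diagonal relation fixes exactly the off-diagonal entries
\begin{equation*}
K^\pm_{ij}(x,x) = \frac{-i b_i b_j\, Q_{ij}(x)}{b_j - b_i}, \qquad i\ne j,
\end{equation*}
and leaves the two diagonal entries on $t=x$ free, while the side relation pins down one column of $K_\pm(\cdot,0)$ in terms of the other.

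Next I would integrate along characteristics. Componentwise the PDE is the transport equation $b_j\partial_x K^\pm_{ij} + b_i\partial_t K^\pm_{ij} = -ib_i b_j(QK_\pm)_{ij}$ with characteristic lines $b_i t - b_j x = \text{const}$. For $i\ne j$ these characteristics cross $\Omega$ transversally from the diagonal $t=x$ down to the side $t=0$, and integration from the prescribed diagonal value turns the PDE into a Volterra representation for $K^\pm_{ij}$ on all of $\Omega$. For $i=j$ the characteristics are parallel to $t=x$ and meet $\Omega$ through the side $t=0$, where the side relation expresses $K^\pm_{ii}(x,0)$ in terms of $K^\pm_{ij}(x,0)$, $i\ne j$, found in the first step; integrating from $(x-t,0)$ then yields $K^\pm_{ii}$ throughout $\Omega$. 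Because $Q$ has zero diagonal, the four integral equations decouple into the two coupled pairs $(K^\pm_{11},K^\pm_{21})$ and $(K^\pm_{12},K^\pm_{22})$.

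Each pair is a Volterra system with $L^1$ coefficients, so Picard iteration converges in $L^\infty(\Omega)$ and produces a unique solution. Differentiation of the explicit characteristic representation yields $K^\pm_{ij}\in W^1_1(\Omega)$ when $Q\in L^1$, upgrading to $C^1(\Omega)$ when $Q\in C(\overline\Omega)$. The hardest part will be the regularity bookkeeping in the $L^1$ case, where one cannot simply differentiate the series termwise and must verify that the weak derivatives of the iterates stay in $L^1$; this is done by differentiating along characteristics directly, producing integral terms in which one antiderivative of $Q$ is absorbed by the Volterra structure. Compatibility of the diagonal and side conditions at the corner $(0,0)$ of $\Omega$ is automatic, since the diagonal relation determines $K^\pm_{ij}(0,0)$ for $i\ne j$ from $Q_{ij}(0)$ and the side relation consistently fixes $K^\pm_{ii}(0,0)$ from these.
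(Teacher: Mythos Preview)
Your approach is correct and is the standard construction of triangular transformation operators via a Goursat problem; the paper does not prove this lemma but merely cites it from \cite{Mal99}. The relations you derive---the hyperbolic PDE $B\partial_xK_\pm+(\partial_tK_\pm)B=-iQK_\pm$, the diagonal condition $K^\pm_{ij}(x,x)=ib_ib_jQ_{ij}(x)/(b_i-b_j)$ for $i\ne j$, and the side condition $K_\pm(x,0)B\binom{1}{\pm1}=0$---are exactly those the paper quotes from \cite{Mal99} as (3.12)--(3.13) in the proof of the next lemma, so your outline reproduces the intended argument.
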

The following lemma is the key result for proving
Theorem~\ref{th3.1}. It is similar to  the known statement for
Sturm-Liouville operator (cf.~\cite[Lemma 6]{Mal08}).
   \begin{lemma}\label{lem3.2}
Let $Q(\cdot)\in C(\Omega)\otimes{\Bbb C}^{2\times 2}$, and let
$K_{\pm}(\cdot,\cdot)$ be the kernels of the transformation
operators given by~\eqref{3.8}. Then the following relations hold:
    \begin{eqnarray}
K^+_{11}(1,1) - K^-_{11}(1,1) & = & 2i b_1 (b_2 - b_1)^{-1} \cdot b_1 Q_{12}(0),\label{K11-} \\
K^+_{21}(1,1) + K^-_{21}(1,1) & = & 2i b_1 (b_2 - b_1)^{-1} \cdot b_2 Q_{21}(1), \label{K21+} \\
K^+_{21}(1,1) - K^-_{21}(1,1) & = & 0, \label{K21-} \\
K^+_{12}(1,1) - K^-_{12}(1,1) & = & 0, \label{K12-} \\
K^+_{12}(1,1) + K^-_{12}(1,1) & = & 2i b_2 (b_1 - b_2)^{-1} \cdot b_1 Q_{12}(1),\label{K12+} \\
K^+_{22}(1,1) - K^-_{22}(1,1) & = & 2i b_2 (b_1 - b_2)^{-1} \cdot b_2 Q_{21}(0). \label{K22-}
    \end{eqnarray}
   \end{lemma}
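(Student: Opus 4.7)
The plan is to substitute the transformation representation~\eqref{3.8} into the system~\eqref{3.1}, use integration by parts to uncover the Goursat problem satisfied by each matrix kernel $K_\pm(x,t)$, and then read off the six identities by comparing the coefficients of the independent ``modes'' $e^{ib_1\lambda x}$, $e^{ib_2\lambda x}$, $e^{ib_1\lambda t}$ and $e^{ib_2\lambda t}$ in the integrands, together with the term constant in $\lambda$.

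Writing $[e_\pm]_1(x;\lambda) = e^{ib_1\lambda x} + \int_0^x\bigl[K_{11}^\pm(x,t)e^{ib_1\lambda t}\pm K_{12}^\pm(x,t)e^{ib_2\lambda t}\bigr]dt$ and analogously for $[e_\pm]_2$, differentiating in $x$ and demanding that the componentwise form $y_j' = ib_j\lambda y_j - ib_j Q_{jk}(x)y_k$ of \eqref{3.1} be satisfied, one uses $ib_j\lambda e^{ib_j\lambda t}=\partial_t e^{ib_j\lambda t}$ to integrate by parts. This produces three groups of conditions. First, matching the $t=x$ boundary contributions gives the algebraic identities
\[
K_{12}^\pm(x,x)(b_2-b_1) = -ib_1 b_2\,Q_{12}(x), \qquad
K_{21}^\pm(x,x)(b_1-b_2) = -ib_1 b_2\,Q_{21}(x),
\]
whose right-hand sides are sign-independent; specialising at $x=1$ and then taking differences and sums yields \eqref{K12-}, \eqref{K21-}, \eqref{K12+} and \eqref{K21+} at once. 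Second, matching the $t=0$ endpoint terms (the part of the identity that is constant in $\lambda$) gives the left-boundary relations
\[
K_{11}^\pm(x,0) = \mp (b_1/b_2)\, K_{12}^\pm(x,0), \qquad
K_{22}^\pm(x,0) = \mp (b_2/b_1)\, K_{21}^\pm(x,0).
\]
Third, matching the integrands yields the Goursat PDEs, of which only the diagonal pair
\[
\partial_x K_{11}^\pm + \partial_t K_{11}^\pm = -ib_1 Q_{12}(x) K_{21}^\pm, \qquad
\partial_x K_{22}^\pm + \partial_t K_{22}^\pm = -ib_2 Q_{21}(x) K_{12}^\pm
\]
will be used further.

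To obtain the remaining identities \eqref{K11-} and \eqref{K22-}, restrict the two displayed PDEs to the diagonal $t=x$: by the chain rule they become scalar ODEs
\[
\tfrac{d}{dx}K_{11}^\pm(x,x) = -ib_1 Q_{12}(x) K_{21}^\pm(x,x), \qquad
\tfrac{d}{dx}K_{22}^\pm(x,x) = -ib_2 Q_{21}(x) K_{12}^\pm(x,x),
\]
whose right-hand sides are \emph{identical} for the two signs by what was proved above. Hence both $K_{11}^+(x,x)-K_{11}^-(x,x)$ and $K_{22}^+(x,x)-K_{22}^-(x,x)$ are constant in $x$, so equal to their values at $x=0$. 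Those initial values are determined by the left-boundary relations at $t=0$ together with the already computed $K_{12}^\pm(0,0)=ib_1 b_2 Q_{12}(0)/(b_1-b_2)$ and $K_{21}^\pm(0,0)=ib_1 b_2 Q_{21}(0)/(b_2-b_1)$; substitution gives exactly the right-hand sides of \eqref{K11-} and \eqref{K22-}.

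The main obstacle is purely organisational: the four $\pm$ signs multiplied by the asymmetric ratios $b_1/b_2$ must be tracked correctly through four coupled integral identities, and the formal integration-by-parts manipulations must be legitimised by the $C^1$-regularity of the kernels granted by \cite{Mal99} under the hypothesis $Q\in C(\Omega)\otimes\bC^{2\times 2}$. Once the sign pattern of the resulting PDE--boundary-condition system is correctly assembled, the six identities fall out mechanically.
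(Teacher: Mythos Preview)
Your argument is correct, and for the four ``easy'' identities \eqref{K21+}--\eqref{K12+} it coincides with the paper's: both read them off directly from the diagonal condition $K^{\pm}_{12}(x,x)=ib_1b_2Q_{12}(x)/(b_1-b_2)$, $K^{\pm}_{21}(x,x)=ib_1b_2Q_{21}(x)/(b_2-b_1)$ (the paper simply cites these and the left-boundary relations from \cite{Mal99} rather than re-deriving them).

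Where you genuinely diverge is in the treatment of \eqref{K11-} and \eqref{K22-}. The paper does \emph{not} use the diagonal PDE. Instead it invokes the intertwining relation
\[
K^{+}(x,t)=K^{-}(x,t)+\Psi(x-t)+\int_{t}^{x}K^{-}(x,s)\Psi(s-t)\,ds
\]
from \cite{Mal99}, with $\Psi=\diag(\Psi_1,\Psi_2)$, which at $t=x$ gives $K^{+}_{jj}(1,1)-K^{-}_{jj}(1,1)=\Psi_j(0)$ directly; the value $\Psi_j(0)$ is then computed by evaluating the same relation at $(x,t)=(0,0)$ and combining with the left-boundary conditions. Your route --- restricting the $(1,1)$ and $(2,2)$ components of the Goursat PDE to the diagonal, observing that the resulting ODEs have sign-independent right-hand sides, hence the differences $K^{+}_{jj}(x,x)-K^{-}_{jj}(x,x)$ are constant and equal to their values at $x=0$ --- is more elementary and fully self-contained: it needs only the PDE and boundary data, not the additional structural fact that $I+K_{+}$ and $I+K_{-}$ differ by a diagonal convolution factor. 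The paper's approach, on the other hand, makes transparent \emph{why} the diagonal differences depend only on the initial data of $Q$ (they literally are $\Psi(0)$), and would generalise more readily if one needed higher-order terms.
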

\begin{proof}[Proof]
In the case of $Q(\cdot)\in C[0,1]\otimes{\Bbb C}^{2\times 2}$,
the kernels $K^+(\cdot,\cdot)$ of the transformation operators are
related by
   \begin{eqnarray}
B D_x K^{\pm}(x,t)+D_t K^{\pm}(x,t)B=-i Q(x)K^{\pm}(x,t),\quad
(x,t)\in\Omega,
   \end{eqnarray}
and by the boundary conditions
   \begin{gather}
\label{3.12} K^{\pm}_{12}(x,x) = i\frac{b_1 b_2}{b_1 -
b_2}Q_{12}(x), \qquad K^{\pm}_{21}(x,x) = i\frac{b_1
b_2}{b_2-b_1}Q_{21}(x), \\
  \label{3.13} b_2K^{\pm}_{11}(x,0)\pm b_1 K^{\pm}_{12}(x,0)=0, \quad
b_2K^{\pm}_{21}(x,0)\pm b_1 K^{\pm}_{22}(x,0)=0
   \end{gather}
(see~\cite{Mal99}). Relations~\eqref{K21+}--\eqref{K12+} are
immediately implied by~\eqref{3.12}.

Further, the kernels $K^{\pm}(\cdot,\cdot)$ are related by
    \begin{equation}\label{3.14}
K^+(x,t)=K^-(x,t) + \Psi(x-t)+\int^x_t K^-(x,s)\Psi(s-t)ds
    \end{equation}
(see~\cite[formula (1.44)]{Mal99}), where $\Psi(\cdot)$ stands for
the diagonal matrix function,
$\Psi(\cdot)=\diag\bigl(\Psi_1(\cdot), \Psi_2(\cdot)\bigr)\in
C^1[0,1]\otimes{\Bbb C}^{2\times 2}$. It follows
from~\eqref{3.12}--\eqref{3.14} that
    \begin{eqnarray}\label{3.15}
\Psi_1(0) = K^+_{11}(0,0)-K^-_{11}(0,0) = -b_1
b_2^{-1}\bigl(K^+_{12}(0,0) + K^-_{12}(0,0)\bigr)\nonumber  \\
= 2i b^2_1(b_2-b_1)^{-1}Q_{12}(0),
    \end{eqnarray}
  \begin{eqnarray}\label{3.16}
\Psi_2(0) = K^+_{22}(0,0)- K^-_{22}(0,0) = - b^{-1}_1
b_2\bigl(K^+_{21}(0,0) + K^-_{21}(0,0)\bigr) \nonumber \\
= - 2i b^2_2(b_2 - b_1)^{-1}Q_{21}(0).
  \end{eqnarray}
On the other hand, due to~\eqref{3.14} we have
    \begin{equation}\label{3.17}
K^+_{jj}(1,1)-K^-_{jj}(1,1) = \Psi_j(0), \quad  j\in\{1,2\}.
    \end{equation}
Combining ~\eqref{3.15} and~\eqref{3.16} with~\eqref{3.17} we
arrive at relations~\eqref{K11-}, \eqref{K22-}.
   \end{proof}
   \begin{proof}[The proof of Theorem~\ref{th3.1}] (i)
The spectrum $\sigma(L_{C,D})$ of the operator $L_{C,D}$ generated
by problem~\eqref{3.1}--\eqref{3.3} in $L^2([0,1]; {\Bbb C}^2)$
coincides with the zero set of the determinant $\Delta(\cdot)$,
and the multiplicity $p_n$ of the zero $\lambda_n$ of the (entire)
function $\Delta(\cdot)$ coincides with the dimension of the root
subspace
$$
\cH_n := \Span\{\ker(L_{C,D} - \lambda_n)^k: \ k\in{\Bbb Z}_+\},
\qquad \dim \cH_n=p_n
$$
(see~\cite[Sec.5.6]{AgrKatSiVoi99}, \cite{Markus86},
\cite{Nai69}). Let us introduce solutions $w_j(x;\gl)$
of~\eqref{3.1} by setting
   \begin{equation}\label{4.5}
w_1(x;\lambda):= u_{22}(\lambda)\Phi_1 -
u_{21}(\lambda)\Phi_2,\quad w_2(x;\lambda):=
-u_{12}(\lambda)\Phi_1 + u_{11}(\lambda)\Phi_2,
   \end{equation}
where $u_{j1}(\cdot), u_{j2}(\cdot)$ are entries of the matrix
$U(\cdot)$ of the form~\eqref{3.3A}. Clearly, $U_j(w_j) =
\Delta(\lambda)$ and $U_1(w_2) = U_2(w_1) =0$; in particular,
$U_j\bigl(w_j(\cdot;\lambda_n)\bigr)= \Delta(\lambda_n)=0$.
Further, the functions $w^{(k)}_j(x;\lambda):=
D^k_{\lambda}w_j(x;\lambda)$ satisfy the equations
   \begin{equation}\label{4.6}
Lw^{(k)}_j = \lambda w^{(k)}_j + kw_j^{(k-1)}, \qquad j\in\{1,2\}.
   \end{equation}
Since $U_i(D^k_{\lambda}w_j(x;\lambda)) =
D^k_{\lambda}(U_i(w_j(x;\lambda)))$ and  $\lambda_n$ is the root
of characteristic determinant $\Delta(\cdot)$ of multiplicity
$p_n$, then the functions
$D^k_{\lambda}w(x;\lambda)|_{\lambda=\lambda_n}$,\
$k\in\{1,\ldots,p_n\}$, satisfy boundary conditions~\eqref{3.3} as
well. Hence in the case of $\dim\ker(L_{C,D}-\lambda_n)=1$, at
least one of the two systems
$\{w_j^{(k)}(\cdot;\lambda)\}_{k=1}^{p_n}$,\ $j\in \{1,2\}$, forms
a chain of an eigenfunction and associated functions.

If $\dim\ker(L_{C,D}-\lambda_n)=2$, the root subspace $\cH_n$ has
the form
    \begin{equation}\label{4.7A}
\cH_n  = \Span\{D^k_{\lambda}w_j(x;\gl)|_{\gl = \gl_n}, \
k\in\{0,1,\ldots,p_{n}-1\}, \ j\in\{1,2\}\}.
    \end{equation}

By assuming that the system of root functions of the operator
$L_{C,D}$ is incomplete in $L^2([0,1]; {\Bbb C}^2)$, we find a
vector $(0\not =)f = \col(f_1,f_2)$ orthogonal to this system.
Hence we conclude that the entire functions
   \begin{equation}\label{5.24}
w_j(\lambda;f) := \int^1_0\langle w_j(x;\lambda),f(x)\rangle dx
\quad j\in\{1,2\},
   \end{equation}
have a zero of multiplicity $\ge p_n$ at every point
$\lambda_n\in\sigma(L_{C,D})$. Thus,  $G_j(\cdot; f) := w_j(\cdot;
f)/\Delta(\cdot), \ j\in \{1,2\},$ is the entire function. Let us
estimate their growth.

(ii)\ First we estimate the growth of $\Delta(\cdot)$ from below. Since
$\Phi(0;\lambda)=I_2$ and $e_{\pm}(0;\lambda) = \binom{1}{\pm 1}$ due
to~\eqref{3.8}, we have
$$
2\Phi_1(\cdot;\lambda) = e_+(\cdot;\lambda) + e_-(\cdot;\lambda),
\quad 2\Phi_2(\cdot;\lambda) = e_+(\cdot;\lambda) -
e_-(\cdot;\lambda).
$$
By setting
    \begin{equation}
R_{jk}^{\pm}(t):=  K^+_{jk}(1,t) \pm K^-_{jk}(1, t), \quad
j,k\in\{1,2\},
    \end{equation}
and by taking into account representations~\eqref{3.8} for the
solutions $e_{\pm}(\cdot;\lambda)$, we obtain
\begin{eqnarray}
2\varphi_{11}(1;\lambda) & = & 2 e^{i b_1 \lambda} + \int^1_0 R_{11}^{+}(t)e^{ib_1\lambda t}dt
+ \int^1_0 R_{12}^{-}(t)e^{ib_2\lambda t}dt, \label{3.18a} \\
2\varphi_{12}(1;\lambda) & = & \ \ \ \ \ \ \ \ \ \ \ \int^1_0 R_{11}^{-}(t)e^{ib_1\lambda t}dt
+ \int^1_0 R_{12}^{+}(t)e^{ib_2\lambda t}dt. \label{3.19} \\
2\varphi_{21}(1;\lambda) & = & \ \ \ \ \ \ \ \ \ \ \ \int^1_0 R_{21}^{+}(t)e^{ib_1\lambda t}dt
+ \int^1_0 R_{22}^{-}(t)e^{ib_2\lambda t}dt, \label{3.18} \\
2\varphi_{22}(1;\lambda) & = & 2 e^{i b_2 \lambda} + \int^1_0 R_{21}^{-}(t)e^{ib_1\lambda t}dt
+ \int^1_0 R_{22}^{+}(t)e^{ib_2\lambda t}dt. \label{3.19a}
\end{eqnarray}
Noting that $R_{jk}^{\pm}(\cdot)\in C^1[0,1],$\  $j,k\in \{1,2\}$,
we integrate by parts in~\eqref{3.18a},~\eqref{3.18},~\eqref{3.19}
and~\eqref{3.19a} and insert the expressions thus obtained
into~\eqref{3.5}. Then we arrive at the following expression for
the characteristic determinant
    \begin{eqnarray}
\Delta(\lambda) = J_{12} + J_{34}e^{i(b_1+b_2)\lambda} &+&
\left(J_{32}+\frac{r_1(1)}{2ib_1\lambda}\right) e^{ib_1\lambda} +
\left(J_{14}+\frac{r_2(1)}{2ib_2\lambda}\right) e^{ib_2\lambda} \nonumber  \\
- \frac{r_1(0)}{2ib_1 \lambda}  - \frac{r_2(0)}{2i b_2\lambda}
&-&\int^1_0 r'_1(t)\frac{e^{ib_1\lambda t}}{2ib_1\lambda}dt -
\int^1_0 r'_2(t)\frac{e^{ib_2\lambda t}}{2ib_2\lambda}dt
    \end{eqnarray}
in which
   \begin{eqnarray*}
r_1(t) := J_{32}R_{11}^{+}(t) + J_{13}R_{11}^{-}(t) + J_{42}
R_{21}^{+}(t) + J_{14}R_{21}^{-}(t), \\
r_2(t) := J_{32}R_{12}^{-}(t) + J_{13}R_{12}^{+}(t) + J_{42}
R_{22}^{-}(t) + J_{14}R_{22}^{+}(t).
   \end{eqnarray*}
By Lemma~\ref{lem3.2},
   \begin{eqnarray}
J_{32} + \frac{r_1(1)}{2 i b_1 \lambda} = J_{32} \left(1 + \frac{R_{11}^{+}(1)}{2 i b_1 \lambda}\right)
+ \frac{b_1 J_{13} Q_{12}(0) + b_2 J_{42} Q_{21}(1)}{(b_2 - b_1) \lambda}, \\
J_{14} + \frac{r_2(1)}{2 i b_2 \lambda} = J_{14} \left(1 + \frac{R_{22}^{+}(1)}{2 i b_2 \lambda}\right)
+ \frac{b_1 J_{13} Q_{12}(1) + b_2 J_{42} Q_{21}(0)}{(b_1 - b_2) \lambda}.
   \end{eqnarray}
Conditions~\eqref{3.7A}--\eqref{3.7B} yield now that
$$
\left|J_{32}+\frac{r_1(1)}{2ib_1\lambda}\right| \ge \frac{c}{|\lambda|+1},\qquad
\left|J_{14}+\frac{r_2(1)}{2ib_2\lambda}\right| \ge \frac{c}{|\lambda|+1},
\qquad c>0, \ \ \lambda \in \mathbb{C}\backslash\{0\}.
$$
This implies the desired estimates for $\Delta(\cdot)$ from below:
   \begin{equation}
|\Delta(\lambda)|\ge \frac{c}{|\lambda|+1}\exp\left(|b_{\mp}\Im\lambda|\right),
\quad \lambda\in\Omega^{\pm}_{\varepsilon} := \{\lambda:
\varepsilon\le\pm\arg\lambda\le\pi-\varepsilon\},
   \end{equation}
where $b_- := b_1,\  b_+ :=  b_2$.

(iii) In this step we estimate the growth of $w_j(\cdot;f)$ from
above. We show that
    \begin{equation} \label{5.31A}
w_j(x;\lambda) = O(\exp\left(|b_{\mp}\Im\lambda|\right)),\qquad
\lambda\in\Omega^{\pm}_{\varepsilon}.
    \end{equation}
Let $Y_j:=\col(y_{1j},y_{2j}),\  j\in\{1,2\}$, be the solution of
\eqref{3.1} satisfying \eqref{2.21}, i.e.
     \begin{equation}\label{5.31}
y_{kj}(x,\lambda)=\bigl(\delta^j_k + o(1)\bigr)\exp\left(ib_j\lambda x\right),
\qquad \lambda\in\Omega^+_{\varepsilon},\quad j,k\in\{1,2\},
     \end{equation}
and let
$\wt{U}(\lambda):=\bigl(\wt{u}_{jk}(\lambda)\bigr)^2_{j,k=1} :=
\bigl(U_j(Y_k)\bigr)^2_{j,k=1}$. Alongside solutions \eqref{4.5}
we introduce solutions
    \begin{equation}\label{5.33A}
V_1(x,\lambda) = {\wt u}_{22}(\lambda)Y_1  - {\wt
u}_{21}(\lambda)Y_2,\quad V_2(x,\lambda) = -{\wt
u}_{12}(\lambda)Y_1 + {\wt u}_{11}(\lambda)Y_2.
    \end{equation}
According to \eqref{2.22} and \eqref{2.25} the fundamental
matrices $\Phi(x, \cdot)$ and $Y(x, \cdot) =
\bigl(Y_1(\cdot,\lambda)\  Y_2(x, \cdot)\bigr)$ of equation
\eqref{3.1}  as well as  the matrices $U(\cdot)$ and
$\wt{U}(\cdot)$ are connected by
   \begin{equation}\label{5.33}
\Phi(x,\lambda)= Y(x,\lambda)P(\lambda)  \quad
           \text{and}\quad  U(\lambda)=\wt{U}(\lambda)P(\lambda),\qquad
           \lambda\in\Omega^+_{\varepsilon},
   \end{equation}
where $P(\cdot)$  is the invertible holomorphic $2\times 2$ matrix
function.  Hence (cf. \eqref{2.27})
   \begin{equation}\label{5.34}
w_j(x,\lambda)=V_j(x,\lambda)\det P(\lambda), \qquad
\lambda\in\Omega^+_{\varepsilon},\quad j\in\{1,2\}.
   \end{equation}
It follows from \eqref{5.31} that
    \begin{equation*}
\wt{u}_{11}(\lambda)= O(e^{ib_1\lambda}),\quad
\wt{u}_{12}(\lambda)= O(1), \quad  \wt{u}_{21}(\lambda)=
O(e^{ib_1\lambda}),\quad \wt{u}_{22}(\lambda)= O(1),
    \end{equation*}
as $\lambda\to \infty,\ \lambda\in\Omega^+_{\varepsilon}.$ It
follows with account of \eqref{5.31} and  \eqref{5.33A} that
    \begin{equation}\label{5.36}
V_1(x,\lambda)= O(e^{ib_1\lambda}), \qquad V_2(x,\lambda)=
O(e^{ib_1\lambda})\quad\text{as}\quad     \lambda\to\infty, \quad
\lambda\in\Omega^+_{\varepsilon}.
    \end{equation}
Moreover, substituting $x=0$  in the first of equalities \eqref{5.33}  and
taking into account   \eqref{5.31} and $\Phi(0,\lambda)= I_2$, we get
$P(\lambda) = I_2 + o_2(\lambda)$. Combining this relation with \eqref{5.34}
and \eqref{5.36}  yields \eqref{5.31A} for $\lambda\in\Omega^+_{\varepsilon}.$
The second relation in \eqref{5.31A} is proved similarly. In turn, combining
estimates \eqref{5.31A} with \eqref{5.24} yields
    \begin{equation} \label{5.31B}
w_j(\lambda; f) = o(\exp\left(|b_{\mp}\Im\lambda|\right))\qquad \text{as} \quad
\lambda \to\infty, \quad \lambda\in\Omega^{\pm}_{\varepsilon}.
    \end{equation}

Hence applying the Phragmen-Lindel\"of theorem  to the functions $G_j(\cdot;f)$
in the angles $\Omega^{\pm}_{\varepsilon}$, we conclude that $G_j(\cdot;f) =
\const,\ j\in\{1,2\}$. Using the same technique as in~\cite{Mal08} one can
prove that $G_j(\cdot;f) = 0,\ j\in\{1,2\}$. Now the proof is completed by
applying steps (iv) and (v) of the proof of Theorem~\ref{th2.1}. The minimality
is implied by Lemma \ref{Lem_minimality}.
        \end{proof}

       \begin{remark}
In the case of Dirac system ($-b_1= b_2\in \Bbb R_+$) the step (iii) of the
proof can be substantially simplified.  To this end we set
   \begin{equation}
\Phi_{j r}(x,\lambda) := \varphi_{2j}(1,\lambda)\Phi_1(x,\lambda) -
\varphi_{1j}(1,\lambda)\Phi_2(x,\lambda), \qquad j\in\{1,2\}.
   \end{equation}
Clearly, $\Phi_{j r}(\cdot,\lambda)$ is the solution of equation
\eqref{3.1}.  Moreover,  since $\tr Q(x)=0,x\in[0,1]$, by
Liouville theorem  $\det\Phi(x,\lambda) = \det\Phi(0,\lambda) =
1,\ x\in[0,1]$. Hence
    \begin{equation}
\Phi_{1r}(1,\lambda)=\binom{0}{-1}\qquad \text{and}\qquad
\Phi_{2r}(1,\lambda)=\binom{1}{0}.
    \end{equation}
On the other hand,
    \begin{eqnarray*}
w_j(x,\lambda) = [a_{j1}\varphi_{12}(0,\lambda) + a_{j2}\varphi_{22}(0,\lambda)
+ a_{j3}\varphi_{12}(1,\lambda)
+ a_{j4}\varphi_{22}(1,\lambda)]\Phi_1(x,\lambda)   \nonumber  \\
- [a_{j1}\varphi_{11}(0,\lambda) + a_{j2}\varphi_{21}(0,\lambda) +
a_{j3}\varphi_{11}(1,\lambda)
+ a_{j4}\varphi_{21}(1,\lambda)]\Phi_2(x,\lambda) \nonumber  \\
=a_{j2}\Phi_1(x,\lambda)-a_{j1}\Phi_2(x,\lambda)+a_{j3}\Phi_{1r}(x,\lambda)+a_{j4}\Phi_{2r}(x,\lambda).
    \end{eqnarray*}
Combining this representation with \eqref{5.24} we arrive at \eqref{5.31A}.
      \end{remark}

      \begin{corollary}\label{cor_irreg_adj}
Assume  the conditions of Theorem \ref{th3.1}. Then  the system of root
functions of the  operator $L_{C,D}^*$ is also complete and minimal in
$L^2\left([0,1];{\Bbb C}^2\right)$.
   \end{corollary}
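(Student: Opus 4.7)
The plan is to apply Theorem~\ref{th3.1} directly to the adjoint operator $L^*_{C,D}$. First I would identify the adjoint BVP: since $B^*=B$ and $Q$ has the off-diagonal form~\eqref{3.2}, the adjoint operator acts via $g\mapsto -iBg'+Q^*(x)g$, where $Q^*$ is again of the form~\eqref{3.2} with continuous entries $Q^*_{12}=\overline{Q_{21}}$, $Q^*_{21}=\overline{Q_{12}}$. The adjoint boundary conditions $C_*g(0)+D_*g(1)=0$ are determined by the Lagrange identity as in the proof of Corollary~\ref{cor2.1A}: $\ker(C_*\ D_*)$ is the $w$-orthogonal complement of $\ker(C\ D)$ in $\mathbb{C}^4$ with respect to the bilinear form $w(\varphi,\psi)=\langle\widetilde{B}\varphi,\psi\rangle$, where $\widetilde{B}=\diag(B,-B)$. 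Thus the adjoint BVP is itself of the form~\eqref{3.1}--\eqref{3.3}, and the entire task reduces to verifying that hypotheses~\eqref{3.7A}--\eqref{3.7B} for $(C_*,D_*,Q^*)$ follow from those for $(C,D,Q)$.

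The core step is an explicit expression of the minors $J^*_{jk}$ of $(C_*\ D_*)$ in terms of the $J_{jk}$. Assuming temporarily that $J_{12}\neq 0$, I would parametrize $\ker(C\ D)$ by the basis $u_1=(J_{23},-J_{13},J_{12},0)^T$, $u_2=(J_{24},-J_{14},0,J_{12})^T$, take the rows of $(C_*\ D_*)$ as $(\widetilde{B}u_1)^*$ and $(\widetilde{B}u_2)^*$, and apply the Pl\"ucker relation $J_{12}J_{34}-J_{13}J_{24}+J_{14}J_{23}=0$ to obtain
\begin{equation*}
J^*_{32}=(b_1b_2)^{-1}\overline{J_{12}J_{14}},\ \ J^*_{14}=(b_1b_2)^{-1}\overline{J_{12}J_{32}},\ \ J^*_{13}=-b_1^{-2}\overline{J_{12}J_{42}},\ \ J^*_{42}=-b_2^{-2}\overline{J_{12}J_{13}}.
\end{equation*}
Substituting these together with $Q^*_{12}(x)=\overline{Q_{21}(x)}$ and $Q^*_{21}(x)=\overline{Q_{12}(x)}$, a routine simplification then shows that condition~\eqref{3.7A} for the adjoint triple $(C_*,D_*,Q^*)$ coincides, after dividing by the nonzero factor $|J_{12}|/|b_1b_2|$, with condition~\eqref{3.7B} for $(C,D,Q)$, and vice versa. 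Hence both hypotheses hold for the adjoint, Theorem~\ref{th3.1} yields the completeness of the root system of $L^*_{C,D}$, and minimality will follow from Lemma~\ref{Lem_minimality} applied to the resolvent of $L^*_{C,D}$ at a regular point.

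The main obstacle will be the degenerate case $J_{12}=0$, where the explicit basis above is unavailable. The resolution is to exploit the maximality condition~\eqref{1.4}: at least one minor $J_{jk}$ is nonzero, and permuting the roles of the four columns of $(C\ D)$ produces an analogous parametrization of $\ker(C\ D)$ together with formally identical relations between the $J^*_{jk}$ and the $J_{jk}$. Conceptually, the passage $(C\ D)\mapsto(C_*\ D_*)$ is the composition of the Hodge star on $\bigwedge^2\mathbb{C}^4$ with complex conjugation and multiplication by $\widetilde{B}$, and the quadratic combinations of minors entering conditions~\eqref{3.7A}--\eqref{3.7B} are symmetric under this operation up to the swap of the two conditions; this yields the equivalence uniformly in all subcases.
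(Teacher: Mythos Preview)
Your approach is correct and genuinely different from the paper's. The paper proceeds by case analysis: when $J_{32}J_{14}\ne 0$ it invokes Corollary~\ref{cor2.1A} directly, and otherwise it splits into the three subcases $J_{14}=J_{32}=0$, $J_{32}=0\ne J_{14}$, $J_{14}=0\ne J_{32}$, in each of which it normalizes the boundary conditions to an explicit canonical form (e.g.\ \eqref{cond_J14=J32=0} or \eqref{4.20A}), writes down the adjoint boundary conditions by hand, and then verifies \eqref{3.7A}--\eqref{3.7B} for $L^*_{C,D}$ by direct substitution. Your route bypasses the case split entirely: you compute the Pl\"ucker minors $J^*_{jk}$ of $(C_*\ D_*)$ uniformly in terms of the $J_{jk}$, and observe that \eqref{3.7A} and \eqref{3.7B} are interchanged under the passage to the adjoint. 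The Hodge-star remark is the right way to dispose of the residual assumption $J_{12}\ne 0$: up to a common nonzero scalar one has $J^*_{32}\propto (b_1b_2)^{-1}\overline{J_{14}}$, $J^*_{14}\propto (b_1b_2)^{-1}\overline{J_{32}}$, $J^*_{13}\propto -b_1^{-2}\overline{J_{42}}$, $J^*_{42}\propto -b_2^{-2}\overline{J_{13}}$, valid for any choice of basis of $\ker(C\ D)$, and these are exactly what makes the two conditions swap. Your argument is cleaner and more conceptual; the paper's has the advantage of being entirely elementary, with every adjoint boundary condition written out explicitly.
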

   \begin{proof}[Proof]
If $J_{32} \ne 0$ and $J_{14} \ne 0$ then Corollary~\ref{cor2.1A}
is applicable. Now let $J_{32} \cdot J_{14} = 0$. Then one of the
conditions~\eqref{3.7AA} or~\eqref{3.7BB} holds.

Hence either $J_{13} \ne 0$ or $J_{42} \ne 0$. Without loss of
generality we assume that $J_{13} \ne 0$. Let $a_j:=\col\
(a_{1j},a_{2j}),\  j\in\{1,\ldots,4\}$. Then $J_{13} \ne 0$
implies  $a_1 \ne 0$ and $a_3 \ne 0$. Now we consider three cases.

\emph{(i)} $J_{14}=J_{32}=0$. Then conditions~\eqref{3.7AA} and~\eqref{3.7BB}
hold true. Since $a_1 \ne 0$ and $a_3 \ne 0$ then $a_4=\alpha_1 a_1$ and
$a_2=\alpha_2 a_3$ with some $\alpha_1, \alpha_2\in \mathbb{C}$. Hence
conditions~\eqref{3.3} are equivalent to the following ones:
     \begin{equation}\label{4.2}
y_1(0) = -\alpha_1 y_2(1), \quad   y_1(1) = -\alpha_2 y_2(0),
     \end{equation}
It can easily be seen that the adjoint operator $L^*_{C,D} :=
(L_{C,D})^*$ is defined by the differential expression $L^*=-i B
d/dx+Q^*(x)$, where
$$
Q^*(x)= \left(
\begin{array}{cc}
  0 & Q^*_{12}(x) \\
  Q^*_{21}(x) & 0 \\
\end{array}%
\right)=:\left(
\begin{array}{cc}
  0 & Q_{12*}(x) \\
  Q_{21*}(x) & 0 \\
\end{array}%
\right),$$
and the boundary conditions
       \begin{equation}\label{4.20}
U_{1*}(y) := {\overline\alpha}_1 b_2 y_1(0) + b_1y_2(1) = 0,\quad U_{2*}(y) :=
b_1 y_2(0) + {\overline\alpha}_2 b_2 y_1(1) = 0.
       \end{equation}
It follows from~\eqref{4.2} and~\eqref{4.20} that
    \begin{equation}\label{4.21}
J_{42*} = b^2_1=b^2_1{\overline J}_{13}\quad \text{and} \quad  J_{13
*}=b^2_2{\overline \alpha}_1{\overline\alpha}_2=b^2_2{\overline J}_{42}.
    \end{equation}
Now we check conditions~\eqref{3.7AA}, \eqref{3.7BB} for the operator
$L^*_{C,D}$. Due to~\eqref{4.21}, expressions~\eqref{3.7AA}, \eqref{3.7BB} for
$L^*_{C,D}$ are of the form
    \begin{eqnarray*}
\ b_1 J_{13 *}Q_{12*}(0) + b_2 J_{42*}Q_{21 *}(1) = b_1 b_2\bigl[
b_2\overline {J}_{42}\overline {Q_{21}(0)} + b_1{\overline J_{13}}\overline {Q_{12}(1)}
\bigr],  \\
\ b_2 J_{42*}Q_{21 *}(1)+b_1 J_{13 *}Q_{12 *}(0)=b_1 b_2\bigl[
b_2\overline {J}_{42}\overline {Q_{21}(1)} + b_1{\overline J_{13}}\overline {Q_{12}(0)}
\bigr],
      \end{eqnarray*}
and different form zero by the assumptions  of  Theorem \ref{th3.1}.

\emph{(ii)} $J_{32}=0, J_{14} \ne 0$.  Then condition~\eqref{3.7AA} hold true.
Since $a_3 \ne 0$ the condition $J_{32}=0$ means that $a_2=\alpha a_3$ with
some $\alpha \in{\Bbb C}$. Since $J_{14} \ne 0$ we represent boundary
conditions~\eqref{3.3} as
   \begin{equation*}
\binom{y_1(0)}{y_2(1)}=-
\begin{pmatrix}
a_{11}&a_{14}\\
a_{21}&a_{24}
\end{pmatrix}^{-1}
    \begin{pmatrix}
\alpha a_{13}& a_{13}\\
\alpha a_{23}& a_{23}
    \end{pmatrix}
\binom{y_2(0)}{y_1(1)}=
\begin{pmatrix}
\alpha \beta_1 & \beta_1\\
\alpha \beta_2 & \beta_2
\end{pmatrix}
\binom{y_2(0)}{y_1(1)},
   \end{equation*}
where $\beta_1 := -J^{-1}_{14}J_{24}$ and $\beta_2 :=
-J^{-1}_{14}J_{12}$. Thus, conditions~\eqref{3.3} take the form
    \begin{eqnarray}\label{4.20A}
U_1(y) := 1\cdot y_1(0) - \alpha \beta_1 \cdot y_2(0) - \beta_1 \cdot y_1(1) +
0\cdot
y_2(1)=0, \nonumber \\
U_2(y):=0\cdot y_1(0)-\alpha \beta_2 \cdot y_2(0)-\beta_2 \cdot y_1(1)+1\cdot
y_2(1)=0.
    \end{eqnarray}
Now boundary conditions for the adjoint operator $L^*_{C,D}$ are rewritten as
follows:
\begin{eqnarray}\label{4.21A}
U_{1*}(y) = -b^{-1}_1\overline{\beta_1}y_1(0) + 0\cdot y_2(0) + b^{-1}_1
y_1(1) + b^{-1}_2\overline{\beta}_2 y_2(1) = 0,\nonumber \\
U_{2*}(y):= b^{-1}_1\overline{\beta}_1 \overline{\alpha}y_1(0) + b^{-1}_2
y_2(0) + 0\cdot y_2(1) - b^{-1}_2\overline{\beta_2}\overline{\alpha}y_2(1)=0.
 \end{eqnarray}
Both relations~\eqref{4.20A} and~\eqref{4.21A} yield that $J_{14*}=0, J_{32*} =
-b^{-1}_1 b^{-1}_2\not =0$ and
    \begin{equation}\label{4.22A}
b_1 J_{13*}=-b^{-1}_1\overline{\beta}_1\overline{\alpha}=b^{-1}_1\overline{J_{42}}, \quad
b_2 J_{42*}= -b^{-1}_2\overline{\beta}_2= b^{-1}_2\overline{J}_{13}.
    \end{equation}
The equations thus obtained allow us to prove that the condition~\eqref{3.7BB}
for $L^*_{C,D}$ is equivalent to the conditions~\eqref{3.7AA} for $L_{C,D}$.
Indeed, taking account of relations $Q_{ij*}(x)= \overline{Q_{ji}(x)}, \ i\not
=j$, and~\eqref{4.22A}, we get
    \begin{eqnarray}
b_1 J_{13*}Q_{12*}(1) + b_2 J_{42*} Q_{21*}(0) =
b^{-1}_1\overline{J}_{42}\overline{Q_{21}(1)} + b^{-1}_2 \overline{J}_{13}\overline{Q_{12}(0)} \nonumber  \\
=b^{-1}_1 b^{-1}_2 \bigl[
b_1\overline{J}_{13}\overline{Q_{12}(0)} + b_2\overline{J}_{42}\overline{Q_{21}(1)}
\bigr]\not =0.
  \end{eqnarray}

\emph{(iii)} $J_{32} \ne 0, J_{14} = 0$. This case is similar to (ii).

Thus, in all cases the assumptions of Theorem~\ref{th3.1} hold true for the
adjont operator $L^*_{C,D}$, and hence the system of its root functions is
complete and minimal in $L^2\bigl([0,1];{\Bbb C}^2\bigr)$.
     \end{proof}
   \begin{corollary}
Suppose that the operator $L_{C,D}(0)$ of the
form~\eqref{3.1}--\eqref{3.3} is dissipative. If $Q\in
C[0,1]\otimes{\Bbb C}^{2\times 2}$ and condition \eqref{3.7AA} is
fulfilled, then the systems of root functions of both operators
$L_{C,D}(Q)$ and $L^*_{C,D}(Q)$ are complete and minimal in
$L^2\bigl([0,1];{\Bbb C}^2\bigr)$.
     \end{corollary}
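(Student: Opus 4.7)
The plan is short: I will deduce the corollary directly from Theorem~\ref{th3.1} together with Corollary~\ref{cor_irreg_adj}. The only substantive new ingredient needed is the non-vanishing of the invariant $J_{14}$, which I will extract from the dissipativity hypothesis.

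First I would apply Lemma~\ref{lem_dissip} in its dissipative branch to the operator $L_{C,D}(0)$, using that $B = \diag(b_1^{-1},b_2^{-1})$ from \eqref{3.2} is self-adjoint. The lemma yields $\det T_-(B;C,D) \ne 0$, and the identification $\det T_-(B;C,D) = J_{14}$ for the $2\times 2$ Dirac type problem~\eqref{3.1}--\eqref{3.3} has already been recorded in the excerpt, so $J_{14} \ne 0$. With this in hand, both hypotheses of Theorem~\ref{th3.1} hold automatically: condition \eqref{3.7A} is satisfied because \eqref{3.7AA} asserts that its second summand is non-zero, and condition \eqref{3.7B} is satisfied because $|J_{14}|>0$ already makes its first summand non-zero, irrespective of $Q$.

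Having verified the hypotheses of Theorem~\ref{th3.1}, I invoke it to conclude completeness and minimality of the system of root functions of $L_{C,D}(Q)$ in $L^2([0,1];\mathbb{C}^2)$. The corresponding statement for the adjoint operator $L_{C,D}^*(Q)$ then follows immediately from Corollary~\ref{cor_irreg_adj}, which transfers precisely this conclusion from $L_{C,D}(Q)$ to its adjoint under the same hypotheses. No real obstacle arises; the only details to be careful about are to invoke the correct (dissipative, not accumulative) branch of Lemma~\ref{lem_dissip}, so as to obtain $\det T_-\ne 0$ rather than $\det T_+\ne 0$, and to keep the sign conventions straight in the identifications $\det T_+(B;C,D) = J_{32}$ and $\det T_-(B;C,D) = J_{14}$ proper to the $2\times 2$ setting of Section~5.
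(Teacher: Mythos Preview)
Your proposal is correct and follows essentially the same approach as the paper: use the dissipative branch of Lemma~\ref{lem_dissip} to get $J_{14}=\det T_-(B;C,D)\ne 0$, then invoke Theorem~\ref{th3.1} and Corollary~\ref{cor_irreg_adj}. You have simply spelled out in more detail than the paper why \eqref{3.7A} and \eqref{3.7B} are both satisfied once $J_{14}\ne 0$ and \eqref{3.7AA} holds.
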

  \begin{proof}[Proof]
Since $L_{C,D}(0)$ is dissipative, it follows from
Lemma~\ref{lem_dissip}  that the condition $J_{14}=\det T_-\not
=0$ is met. It suffices to apply Theorem~\ref{th3.1} and
Corollary~\ref{cor_irreg_adj}.
   \end{proof}

      \begin{remark}
\emph{Dissipative boundary conditions for the equation~\eqref{3.1}
are always nondegenerated}. But, as distinguished from the case of
the Sturm-Liouville operator, they are \emph{not necessarily
regular} because they do no guarantee the validity of the first
regularity condition in~\eqref{2.4}. Moreover, even in the case of
$Q=Q^*$, the condition $\det T_1\not=0$ is not necessary for the
completeness of the system of root functions of the dissipative
operator $L_{C,D}(Q)$.

Note else that there exist non-Volterra dissipative operators
$L_{C,D}(Q)$ for which the system of root functions is not
necessarily complete in $L^2\bigl([0,1];{\Bbb C}^2\bigr)$.
      \end{remark}

Next we consider boundary conditions~\eqref{3.3} of the special form
 \begin{equation}\label{5.3}
U_1(y):=y_1(0)-\beta_1 y_2(0)=0,\quad  U_2(y):=y_2(1) -\beta_2
y_2(0) = 0.
 \end{equation}
   \begin{corollary}\label{cor3.3}
Suppose that $Q\in C[0,1]\otimes{\Bbb C}^{2\times 2}$,\
$\beta_1\in \Bbb C\setminus \{0\}$ and $L_{C,D}$ is the operator
of form~\eqref{3.1}--\eqref{3.3}, where $U_1$ and  $U_2$ are
defined by \eqref{5.3}. Then:
\item $(i)$ the operator $L_{C,D}$ is dissipative whenever  $\im
Q(x)\ge 0$ and
   \begin{equation}\label{5.4}
b^{-1}_2 |\beta_2|^2 \le  b^{-1}_2 + b^{-1}_1 |\beta_1|^2;
   \end{equation}
\item (ii) if $Q_{21}(1)\not =0$, then the system of root
functions of the operator $L_{C,D}$ is complete and minimal;
   \end{corollary}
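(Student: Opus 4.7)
The plan is to reduce both parts to direct computations: part (i) follows by substituting the boundary conditions~\eqref{5.3} into the identity $2\,\im(L_{C,D}(Q)y,y) = \langle By(0),y(0)\rangle - \langle By(1),y(1)\rangle + 2\,\im(Qy,y)_{L^2}$, and part (ii) follows by computing the minors $J_{jk}$ and invoking Theorem~\ref{th3.1}.

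First I would put the boundary conditions~\eqref{5.3} into the form~\eqref{3.3}, which gives $a_{11}=1,\ a_{12}=-\beta_1,\ a_{13}=a_{14}=0$ in the first row and $a_{21}=a_{23}=0,\ a_{22}=-\beta_2,\ a_{24}=1$ in the second. A direct computation of the $2\times 2$ minors then yields
\begin{equation*}
J_{12}=-\beta_2,\quad J_{13}=0,\quad J_{14}=1,\quad J_{32}=0,\quad J_{34}=0,\quad J_{42}=\beta_1.
\end{equation*}
These values will drive both parts of the proof.

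For part (i), substituting $y_1(0)=\beta_1 y_2(0)$ and $y_2(1)=\beta_2 y_2(0)$ into $\langle By(0),y(0)\rangle - \langle By(1),y(1)\rangle$ with $B=\diag(b_1^{-1},b_2^{-1})$ gives
\begin{equation*}
\langle By(0),y(0)\rangle - \langle By(1),y(1)\rangle
= \bigl(b_2^{-1}+b_1^{-1}|\beta_1|^2 - b_2^{-1}|\beta_2|^2\bigr)\,|y_2(0)|^2 \;-\; b_1^{-1}|y_1(1)|^2.
\end{equation*}
Since $b_1<0$, the coefficient $-b_1^{-1}$ is positive, so the second term is always nonnegative; the first term is nonnegative precisely under condition~\eqref{5.4}. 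Combining with $2\,\im(Qy,y)_{L^2}\ge 0$ (which follows from $\im Q(x)\ge 0$) yields $\im(L_{C,D}(Q)y,y)\ge 0$, proving dissipativity.

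For part (ii), I would apply Theorem~\ref{th3.1} with the $J_{jk}$ computed above. Condition~\eqref{3.7B} is automatic since $|J_{14}|=1$. Condition~\eqref{3.7A} reduces to
\begin{equation*}
|J_{32}|+|b_1 J_{13}Q_{12}(0)+b_2 J_{42}Q_{21}(1)| = |b_2\beta_1 Q_{21}(1)|,
\end{equation*}
which is nonzero by the hypotheses $\beta_1\ne 0$ and $Q_{21}(1)\ne 0$ (together with $b_2>0$). Thus both hypotheses of Theorem~\ref{th3.1} are verified, and completeness and minimality of the root system of $L_{C,D}$ in $L^2([0,1];\mathbb{C}^2)$ follow at once. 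There is no real obstacle in either part — the only ``choice'' is the decision to invoke Theorem~\ref{th3.1} rather than Proposition~\ref{th4} (which would fail because $J_{32}=0$ means condition~\eqref{2.4} is violated), and this is exactly the point of the irregular-completeness result.
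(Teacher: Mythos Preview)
Your proposal is correct and matches the paper's approach: the paper computes $J_{32}=0=J_{13}$, $J_{14}=1$, $J_{42}=\beta_1\neq 0$ and applies Theorem~\ref{th3.1} for part~(ii), exactly as you do. The paper omits the proof of part~(i) entirely, so your boundary-form computation simply fills in a routine step the authors left to the reader.
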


   \begin{proof}[Proof]

   (ii) If $Q_{21}(1)\not =0$, then Theorem~\ref{th3.1} is applicable, since in this
case we have $J_{32}=0=J_{13}$ but $J_{14}=1$ and $J_{42}=\beta_1\not =0$.
   \end{proof}

   \begin{remark}
(i) We emphasize that for $Q_{21}(1)\not =0$ the completeness (and
the minimality) of the EAF system of the operator $L_{C,D}(Q)$
holds in the assumptions of Corollary~\ref{cor3.3} with
$\beta_2=0$ too. In the latter case  the second of the
conditions~\eqref{5.3} is "of Volterra type"\ and the
corresponding operator  $L_{C,D}(0)$ with $Q=0$ is incomplete.
Moreover, for $Q=0$ the operator $L_{C,D}(0)$ has a Volterra
inverse.
    \end{remark}
     \begin{remark}
(i) Theorem  \ref{th3.1}  might be considered as an analog of a
special case of the  completeness  result on BVP for
Sturm-Liouville operators with degenerate BC (see \cite[Theorem
1]{Mal08}). More general result even for $n\times n$ Dirac type
systems that involves considerations of derivatives of a smooth
potential matrix $Q$ is more complicated and will be considered in
the forthcoming  paper \cite{LunMal2011}.

(ii)  In connection with Theorem  \ref{th3.1}  and other results
of this section  we mention the papers
~\cite{TroYam01},~\cite{TroYam02},~\cite{HasOri09} devoted to the
Riesz basis property of EAF for BVP with separated (and hence
strictly regular) BC for $2\times 2$ Dirac systems
(\cite{TroYam01},~\cite{TroYam02},~\cite{Mit04}) and  for $2\times
2$ Dirac type systems (\cite{HasOri09}).

The Riesz basis property of EAF for BVP with \emph{regular but
non-strictly regular} (including periodic, antiperiodic and other)
BC for $2\times 2$ Dirac systems have been investigated by
P.~Djakov and
B.~Mityagin~\cite{Mit04},~\cite{DjaMit09},~\cite{DjaMit10}.
Namely, in~\cite{Mit04} and ~\cite{DjaMit09} they  proved the
Riesz basis property of subspaces (spectral projections) for  $2
\times 2$ Dirac system with periodic and antiperiodic BC. In the
next publication~\cite{DjaMit10} these authors extended their
result to the case of \emph{arbitrary regular} but not strictly
regular BC. Moreover, in~\cite{DjaMit10} they proved the Riesz
basis property of the system of EAF for BVP with \emph{general
strictly regular} BC under the assumption $Q_{12},Q_{21} \in
L^2[0,1]$.
\end{remark}

\subsection{Necessary conditions of completeness}

Here we complete  Theorem~\ref{th3.1} by the following result on
necessary conditions of completeness which demonstrate that
conditions~\eqref{3.7A}, \eqref{3.7B} for the Dirac system are
sharp.
   \begin{proposition}\label{prop5.12}
Assume that $B=\diag(-1,1),\  J_{14}=J_{32}=0$ but $J_{13}
J_{42}\not =0$. Further, let $0\notin\supp P_1\cup\supp P_2$,
where
    \begin{equation}\label{4.1}
P_1(x):=J_{13}Q_{12}(x)-J_{42}Q_{21}(1-x), \quad P_2(x) :=
J_{13}Q_{12}(1-x) - J_{42}Q_{21}(x).
    \end{equation}
Then the defect of the system of root functions of
problem~\eqref{3.1}--\eqref{3.3} in $L^2[0,1]\otimes{\Bbb C}^2$ is
infinite.
    \end{proposition}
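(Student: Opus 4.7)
I would reduce to a normalized form of the boundary conditions, show that the characteristic determinant $\Delta(\lambda)$ has exponential type strictly less than $1$, and then exploit this gap to exhibit an infinite-dimensional subspace of $L^2([0,1];{\Bbb C}^2)$ orthogonal to every root function of $L_{C,D}$.

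Since $J_{14}=J_{32}=0$ with $J_{13}\ne 0$, after a non-singular linear recombination of the boundary relations \eqref{3.3} the conditions take the crossed form $y_1(0)=\alpha y_2(1)$, $y_1(1)=\beta y_2(0)$ with $\alpha\beta=J_{42}/J_{13}\ne 0$. The assumption $0\notin\supp P_1\cup\supp P_2$ then becomes the symmetry condition $Q_{12}(x)=\alpha\beta\, Q_{21}(1-x)$ on a set of the form $[0,\varepsilon)\cup(1-\varepsilon,1]$. Using the transformation operator representation \eqref{3.8} and the identities of Lemma~\ref{lem3.2}, the characteristic determinant can be rewritten as
\[
\Delta(\lambda)=(\alpha-\beta)+\tfrac{1}{2}\int_0^1 r_1(t)e^{-i\lambda t}\,dt+\tfrac{1}{2}\int_0^1 r_2(t)e^{i\lambda t}\,dt,
\]
with continuous kernels $r_j$ determined by the differences $R_{jk}^\pm$ evaluated at $x=1$. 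The technical heart of the argument is the claim that the symmetry condition on $Q$ forces $r_1\equiv r_2\equiv 0$ on $(1-\varepsilon,1]$: plugging the relation \eqref{3.14} into $R_{jk}^\pm(t)=K_{jk}^+(1,t)\pm K_{jk}^-(1,t)$ and invoking the Goursat data \eqref{3.12}--\eqref{3.13}, one sees that the functions $s\mapsto r_j(1-s)$ for $s\in[0,\varepsilon)$ satisfy linear Volterra integral equations whose free terms are proportional to $P_j(s)$ and hence vanish by hypothesis, forcing $r_j(1-s)\equiv 0$.

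Granted this assertion, both integrals in the expression for $\Delta(\lambda)-(\alpha-\beta)$ effectively extend only over $[0,1-\varepsilon]$, so $\Delta$ is an entire function of exponential type at most $1-\varepsilon$ in each of the half planes $\mathbb{C}_{\pm}$. On the other hand, the estimate established in step (iii) of the proof of Theorem~\ref{th3.1} shows that for every $f\in L^2([0,1];{\Bbb C}^2)$ the scalar entire functions $w_j(\cdot;f)$ defined by \eqref{5.24} have exponential type at most $1$ in each half plane. If $f$ is orthogonal to the entire system of root functions of $L_{C,D}$, then both quotients $h_j:=w_j(\cdot;f)/\Delta$ are entire, and by Phragm\'en--Lindel\"of they are of exponential type at most $\varepsilon$ in each half plane, i.e.\ of Cartwright class. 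A standard density comparison (via Levin's completeness theorem for exponential systems, or directly via Paley--Wiener for the associated Fourier-type transforms) then shows that the space of $f$'s producing such pairs $(h_1,h_2)$ has infinite codimension complement in the constraint space, so that the deficiency subspace is infinite-dimensional; explicitly, such $f$ may be produced with support in any subinterval of length $\varepsilon/2$ near $x=1$.

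The principal obstacle is the first step: proving that the vanishing of $P_1,P_2$ on a neighbourhood of $x=0$ forces vanishing of $r_1,r_2$ on a neighbourhood of $t=1$. This requires tracking carefully how the endpoint symmetry $Q_{12}(\cdot)=\alpha\beta\, Q_{21}(1-\cdot)$ propagates through the Goursat problem satisfied by the transformation operator kernels $K^\pm$, and reduces cleanly to Volterra integral equations with vanishing right-hand side. The remaining density/codimension argument at the level of entire functions is then of standard nature.
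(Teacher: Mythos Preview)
Your route via the exponential type of $\Delta$ is genuinely different from what the paper does, and both of its main steps are problematic.

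\textbf{The paper's argument is a direct symmetry construction.} It never touches the type of $\Delta$. With the boundary conditions normalized to $y_1(0)=-\alpha_1 y_2(1)$, $y_2(0)=-\alpha_2 y_1(1)$, one defines for each solution $w_j(x;\lambda)$ of \eqref{4.5} the reflected vector
\[
z_j(x;\lambda):=\binom{-\alpha_1\, w_{j2}(1-x;\lambda)}{-\alpha_2\, w_{j1}(1-x;\lambda)}.
\]
The vanishing of $P_1,P_2$ on $[0,\varepsilon]$ means exactly that $z_j$ satisfies the \emph{same} differential equation \eqref{4.6} as $w_j$ on $[0,\varepsilon]$; the boundary conditions force $z_j(0;\lambda_n)=w_j(0;\lambda_n)$ at every eigenvalue, so by uniqueness $z_j\equiv w_j$ on $[0,\varepsilon]$ (and the same for all $\lambda$-derivatives up to the root multiplicity). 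Then any $f$ supported in $[0,\varepsilon]\cup[1-\varepsilon,1]$ with the anti-reflection property $f_1(x)=\overline{\alpha_1^{-1}}f_2(1-x)$, $f_2(x)=\overline{\alpha_2^{-1}}f_1(1-x)$ is automatically orthogonal to every root function, and the space of such $f$ is visibly infinite-dimensional. This is elementary and uses nothing about transformation operators or entire-function theory.

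\textbf{Where your plan breaks down.} First, the claim that $r_1,r_2$ vanish on $(1-\varepsilon,1]$ does not reduce to a Volterra equation with free term $P_j$ in the way you suggest. Lemma~\ref{lem3.2} controls only the diagonal values $R^\pm_{jk}(1)$, and indeed gives $r_j(1)=iP_j(0)=0$; but for $t<1$ the kernels $K^\pm_{jk}(1,t)$ are determined by the Goursat problem along characteristics reaching back into the interior of the square, and the hypothesis links $Q_{12}$ near one endpoint to $Q_{21}$ near the \emph{other} endpoint. Disentangling that coupling through the relation \eqref{3.14} and the boundary data \eqref{3.12}--\eqref{3.13} is not the clean local Volterra problem you describe, and I do not see that the conclusion holds without essentially redoing the paper's reflection argument in disguise.

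Second, even granting that $\Delta$ has type $1-\varepsilon$, your passage to ``infinite defect'' is not standard. Knowing that $h_j=w_j(\cdot;f)/\Delta$ is entire of type $\le\varepsilon$ constrains $f$, but you need to \emph{produce} infinitely many such $f$, and the root functions here are not exponentials---Levin-type completeness theorems for $\{e^{i\lambda_n x}\}$ do not apply directly to the vectors $w_j(\cdot;\lambda_n)$. The concluding remark that ``such $f$ may be produced with support in any subinterval of length $\varepsilon/2$ near $x=1$'' is precisely what needs proof, and your type estimate does not supply it.
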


   \begin{proof}[Proof]
By assumption, there exists an $\varepsilon>0$ such that
   \begin{equation}\label{4.1A}
 P_1(x) = P_2(x) = 0,\qquad   x\in[0,\varepsilon]\cup[1-\varepsilon,1].
   \end{equation}
Let $w_j:=\col(w_{j1}, w_{j2})$ be defined by~\eqref{4.5}. Since
$J_{14}=J_{32}=0$ and $J_{13} J_{42} \ne 0$, we conclude that the boundary
conditions~\eqref{3.3} are equivalent to the following ones
\begin{equation}\label{cond_J14=J32=0}
y_1(0)=-\alpha_1 y_2(1), \quad y_2(0)=-\alpha_2 y_1(1),
\end{equation}
where $\alpha_1 \ne 0$ and $\alpha_2 \ne 0$.
Denote
    \begin{equation}\label{4.3}
z_j(x;\lambda ) := \binom{z_{j1}(x;\lambda)}{z_{j2}(x;\lambda)} :=
\binom{-\alpha_1 w_{j2}(1-x;\lambda)}{-\alpha_2
w_{j1}(1-x;\lambda)}, \qquad j\in\{1,2\}.
    \end{equation}
Let us demonstrate that, for $x\in[0,\varepsilon]$ and every $k\in
\Bbb N$, the functions $z^{(k)}_j(x;\lambda) :=
D^k_{\lambda}z_j(x;\lambda), j\in\{1,2\}$, alongside with the
functions $w^{(k)}_j(x;\lambda)$, satisfy the
equation~\eqref{4.6}. Indeed, from~\eqref{4.6} and with account
of~\eqref{4.1A} and~\eqref{4.3} we obtain:
     \begin{eqnarray*}\label{4.14}
L z_j^{(k)} = -i B \frac{d}{dx}z^{(k)}_j + Q(x)z_j^{(k)} =
i\frac{d}{dx}\binom{-z^{(k)}_{j1}(x;\lambda)}{z^{(k)}_{j2}(x;\lambda)}
+\binom{Q_{12}(x)z^{(k)}_{j2}(x;\lambda)}{Q_{21}(x)z^{(k)}_{j1}(x;\lambda)} \\
= i \frac{d}{dx}\binom{-\alpha_1 w^{(k)}_{j2}(1-x;
\lambda)}{\alpha_2 w^{(k)}_{j1}(1-x;\lambda)} +
\binom{\alpha_1\alpha^{-1}_2 Q_{21}(1-x)(-\alpha_2)
w^{(k)}_{j1}(1-x;\lambda)}{\alpha_2\alpha^{-1}_1
Q_{12}(1-x)(-\alpha_1) w^{(k)}_{j2}(1-x;\lambda)}\nonumber \\
= -\lambda\binom{\alpha_1 w^{(k)}_{j2}(1-x;\lambda)}{\alpha_2
w^{(k)}_{j1}(1-x;\lambda)} - k D^{k-1}_{\lambda} \binom{\alpha_1
w_{j2}(1-x;\lambda)}{\alpha_2 w_{j1}(1-x;\lambda)}  \\ = \lambda
z^{(k)}_j(x;\lambda) + kz^{(k-1)}_j(x;\lambda), \qquad
x\in[0,\varepsilon], \quad j\in\{1,2\}.
       \end{eqnarray*}
Further, since $w^{(k)}_j(x;\lambda_n) =
D^k_{\lambda}w(x;\lambda)|_{\lambda=\lambda_n}$, \ $k\in\{1,\ldots,p_n\}$
satisfy the boundary conditions~\eqref{cond_J14=J32=0}, then
from~\eqref{cond_J14=J32=0} and~\eqref{4.3} we obtain that
    \begin{equation*}\label{4.4}
z^{(k)}_{j1}(0;\lambda_n) = -\alpha_1 w^{(k)}_{j2}(1;\lambda_n) =
w^{(k)}_{j1}(0;\lambda_n), \quad z^{(k)}_{j2}(0;\lambda_n) =
w^{(k)}_{j2}(0;\lambda_n),
    \end{equation*}
for $j\in\{1,2\}$ and $n\in \Bbb N$. Therefore, by the uniqueness
theorem we have
    \begin{equation}\label{4.7}
D^k_{\lambda}z_j(x;\lambda)|_{\lambda=\lambda_n}  =
D^k_{\lambda}w_j(x;\lambda)|_{\lambda=\lambda_n},
           \quad x\in[0,\varepsilon],\ k\in\{0,1,\ldots, p_n-1\}.
    \end{equation}
Further, let $f=\col(f_1,f_2)\in L^2[0,1]\otimes{\Bbb C}^2,\
 f(x)=0$ for $x\in[\varepsilon,1-\varepsilon]$ and
    \begin{equation}\label{4.8}
f_1(x) = \overline{\alpha^{-1}_1} f_2(1-x), \quad  f_2(x) =
\overline{\alpha^{-1}_2}f_1(1-x),\quad  x\in[0,\varepsilon].
    \end{equation}
Let us show that $f$ is orthogonal to the system of root functions
of the problem. Taking account of~\eqref{4.7A}, \eqref{4.3},
\eqref{4.7} and \eqref{4.8}, we obtain
   \begin{eqnarray*}
\int^1_0\bigl\langle w^{(k)}_j(x;\lambda_n),f(x)\bigr\rangle dx =
\int^{\varepsilon}_0\bigl\langle w^{(k)}_j(x;\lambda_n),f(x)\bigr\rangle dx\\
+ \int^{\varepsilon}_0\bigl\langle w^{(k)}_j(1-x;\lambda_n),
f(1-x)\bigr\rangle dx = \int^{\varepsilon}_0
w^{(k)}_{j1}(x;\lambda_n)[\overline{f_1(x)} - \alpha^{-1}_1\overline{f_2(1-x)}]dx          \\
+ \int^{\varepsilon}_0
w^{(k)}_{j2}(x;\lambda_n)[\overline{f_2(x)}-\alpha^{-1}_2\overline{f_1(1-x)}]dx
= 0,\quad  n\in{\Bbb N}.
   \end{eqnarray*}
It follows that the defect of the system of root functions is
infinite.
   \end{proof}
   \begin{remark}
Proposition \ref{prop5.12}  is similar to that of
\cite[Proposition 9]{Mal08} for the Sturm-Liouville operator with
degenerate boundary conditions.
      \end{remark}

\section{Completeness of irregular BVP for $2\times 2$ systems with $B\not = B^*$}

Consider system~\eqref{3.1} with the matrix $B=\diag (b_1^{-1},
b_2^{-1})\not = B^*$ assuming that $b_1/b_2\notin{\Bbb R}$. In
this case the lines $\{\lambda \in \mathbb{C} : \Re(i b_j
\lambda)=0\}$, $j \in \{1,2\},$ divide
 the complex plane  in  two pairs of vertical
sectors and Corollary \ref {cor2.1} guarantees the completeness
and the minimality of the root system of problem~\eqref{3.1},
\eqref{3.3} in the following cases:
    \begin{equation}\label{5.1}
(i) \quad  J_{14}J_{23} \not = 0\quad \text{and}\quad (ii)\quad
J_{12}J_{34}\not =0.
    \end{equation}
Here we consider equation~\eqref{3.1} subject to the boundary
conditions
\begin{equation}\label{5.2}
U_1(y) := y_1(0) - h_0 y_2(0)=0 \qquad U_2(y) := y_1(1) - h_1
y_2(0)=0,
\end{equation}
where $h_0 h_1 \ne 0$.
In this case, $J_{14} = J_{34}=0$ and conditions~\eqref{5.1} are
violated. However, the following result holds.
   \begin{theorem}\label{th7.1}
Let $B = \diag(b^{-1}_1,b^{-1}_2)$ and $a:= b_1
b^{-1}_2\notin{\Bbb R}$. Then the system of root functions of
problem~\eqref{3.1}, \eqref{5.2} is complete and minimal in
$L^2\bigl([0,1];{\Bbb C}^2\bigr)$.
   \end{theorem}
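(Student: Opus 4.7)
The plan is to adapt the Shkalikov-style contradiction argument used in the proofs of Theorems~\ref{th2.1} and \ref{th3.1}. A direct check shows that the present boundary conditions \eqref{5.2} are \emph{not} weakly $B$-regular: with
$C=\begin{pmatrix} 1 & -h_0 \\ 0 & -h_1 \end{pmatrix},\ D=\begin{pmatrix} 0 & 0 \\ 1 & 0 \end{pmatrix}$,
one computes $\det T_{zB}(C,D)\in\{-h_1,\,h_0,\,0\}$ depending on the signs of $\Re(z/b_1)$ and $\Re(z/b_2)$, and this is nonzero only on a half-plane of $z$; in particular, the origin cannot be enclosed in a triangle of admissible points, so Theorem~\ref{th2.1} does not directly apply and finer asymptotic analysis is required.

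Let $\Phi(x;\lambda)$ be the fundamental matrix with $\Phi(0;\lambda)=I_2$. The characteristic determinant reduces to
\[ \Delta(\lambda) \;=\; -h_1 + h_0\varphi_{11}(1;\lambda) + \varphi_{12}(1;\lambda). \]
Following step (i) of the proof of Theorem~\ref{th2.1}, I form the vector solutions
\[ w_1(x;\lambda)=(\varphi_{12}(1;\lambda)-h_1)\Phi_1(x;\lambda)-\varphi_{11}(1;\lambda)\Phi_2(x;\lambda),\qquad w_2(x;\lambda)=h_0\Phi_1(x;\lambda)+\Phi_2(x;\lambda), \]
whose $\lambda$-derivatives at the zeros $\lambda_n$ of $\Delta(\cdot)$ span the root subspaces of $L_{C,D}(Q)$. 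Suppose, for contradiction, that $f\in L^2([0,1];\mathbb{C}^2)$ is nonzero and orthogonal to every root function. Then $F_j(\lambda):=\bigl(w_j(\cdot;\lambda),f\bigr)$ vanishes at each $\lambda_n$ to at least the corresponding multiplicity, so $G_j(\lambda):=F_j(\lambda)/\Delta(\lambda)$, $j\in\{1,2\}$, are entire functions of exponential type.

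Since $a=b_1/b_2\notin\mathbb{R}$, the two lines $\ell_k:=\{\lambda:\Re(ib_k\lambda)=0\}$, $k\in\{1,2\}$, are distinct and partition $\mathbb{C}$ into four non-degenerate open sectors $\sigma_{\varepsilon_1\varepsilon_2}:=\{\lambda:\varepsilon_1\Re(ib_1\lambda)>0,\ \varepsilon_2\Re(ib_2\lambda)>0\}$, $\varepsilon_k\in\{+,-\}$. Proposition~\ref{BirkSys}, applied inside each sector, yields uniform asymptotics
$\varphi_{11}(1;\lambda)=(1+o(1))e^{ib_1\lambda}+o(1)e^{ib_2\lambda}$ and $\varphi_{12}(1;\lambda)=o(1)\bigl(e^{ib_1\lambda}+e^{ib_2\lambda}\bigr)$ on closed sub-sectors. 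From these I extract the following behaviour on a suitably chosen ray in each sector: in $\sigma_{--}$ one has $|\Delta(\lambda)|\to|h_1|$ and $\|w_j(\cdot;\lambda)\|_2\to 0$, so $|G_j(\lambda)|\to0$; in $\sigma_{+-}$ and in the subregion of $\sigma_{++}$ where $\Re(ib_1\lambda)\ge\Re(ib_2\lambda)$, the leading term $h_0 e^{ib_1\lambda}$ dominates $\Delta(\lambda)$, giving $|\Delta(\lambda)|\gtrsim|h_0|e^{\Re(ib_1\lambda)}$ while $|F_j(\lambda)|\le C|\lambda|^{-1/2}e^{\Re(ib_1\lambda)}\|f\|_2$ by the Cauchy--Schwarz inequality, so again $|G_j(\lambda)|=O(|\lambda|^{-1/2})$; in $\sigma_{-+}$ one chooses a ray close to $\ell_2$ so that the $o(1)$ coefficient of the dominant exponential $e^{ib_2\lambda}$ (traceable back to $z_{12}(1;\lambda)$ in the notation of Proposition~\ref{BirkSys}) keeps $|\Delta(\lambda)|$ bounded below, yielding at worst a polynomial bound on $|G_j(\lambda)|$. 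Applying the Phragm\'en--Lindel\"of principle in each of the four convex angles formed by these rays (which together cover $\mathbb{C}$) gives that $G_j$ is bounded on $\mathbb{C}$; combined with the decay along the $\sigma_{--}$-ray and Liouville's theorem, $G_j\equiv 0$. Hence $F_j\equiv 0$, so $f$ is orthogonal to every solution of \eqref{3.1} for $\lambda\notin\sigma(L_{C,D})$, and steps (v)--(vi) of the proof of Theorem~\ref{th2.1} (the resolvent/Cauchy-problem argument) give $f\equiv 0$. Minimality of the root system then follows from Lemma~\ref{Lem_minimality} applied to the compact resolvent $R_{L_{C,D}}(\mu)$ for any $\mu\in\rho(L_{C,D})$.

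The main obstacle is the control of $|\Delta(\lambda)|$ in the mixed sector $\sigma_{-+}$. There the exponential $e^{ib_2\lambda}$ blows up along every interior ray while its Birkhoff coefficient is only guaranteed to be $o(1)$; consequently the naive estimate does not preclude $|\Delta(\lambda)|$ from approaching zero between the eigenvalues. The hypothesis $a\notin\mathbb{R}$ is essential here: it ensures $\sigma_{-+}$ is a genuine two-dimensional sector, allowing one to approach the boundary $\ell_2$ to suppress $\Re(ib_2\lambda)$ while still staying on an interior ray, and the resulting lower bound—even if only of the form $|\Delta|\ge c|\lambda|^{-N}$—combined with the matching polynomial upper bound on $|F_j|$ suffices to apply Phragm\'en--Lindel\"of. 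In the degenerate case $a\in\mathbb{R}$ this flexibility is lost, and indeed Corollary~\ref{cor6.4} will show that weak $B$-regularity is then \emph{equivalent} to completeness of both $L_{C,D}(0)$ and its adjoint.
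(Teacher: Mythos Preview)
Your argument has two genuine gaps. First, the treatment of the sector $\sigma_{-+}$ does not work as written: on any ray strictly inside $\sigma_{-+}$ (even one ``close to $\ell_2$'') one has $\Re(ib_2\lambda)=c|\lambda|$ with $c>0$, so $|e^{ib_2\lambda}|$ grows exponentially while Proposition~\ref{BirkSys} gives no rate for the $o(1)$ coefficient; hence the term $o(1)\,e^{ib_2\lambda}$ in $\Delta(\lambda)$ is uncontrolled and no lower bound on $|\Delta|$ is available there. Second, and more seriously, your claimed estimate $|F_1(\lambda)|\le C|\lambda|^{-1/2}e^{\Re(ib_1\lambda)}$ fails in $\sigma_{+-}$: since $\varphi_{12}(1;\lambda)=o(1)e^{ib_1\lambda}+o(1)e^{ib_2\lambda}=o(e^{ib_1\lambda})$, the summand $\varphi_{12}(1;\lambda)\Phi_1(x;\lambda)$ in $w_1$ is of order $o\bigl(e^{\Re(ib_1\lambda)(1+x)}\bigr)$, so $|F_1|$ can be as large as $o\bigl(e^{2\Re(ib_1\lambda)}\bigr)$, which dominates $|\Delta|\sim|h_0|\,e^{\Re(ib_1\lambda)}$. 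Thus $G_1$ is not bounded on rays in $\sigma_{+-}$, and the route through both $w_1$ and $w_2$ collapses.

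The paper bypasses both difficulties. It works \emph{only} with $w(x;\lambda):=h_0\Phi_1+\Phi_2$ (your $w_2$), chooses two rays inside the sub-sector $S_{b_1,b_2}=\{\theta:0<\Re(ib_2\theta)<\Re(ib_1\theta)\}$, and observes that along each such ray $\Delta\sim h_0 e^{ib_1\lambda}$ as $t\to+\infty$ while $\Delta\to -h_1$ as $t\to-\infty$; the numerator is $o(e^{\Re(ib_1\lambda)})$, respectively $o(1)$, in these two directions. These four half-rays divide $\mathbb{C}$ into sectors of opening $<\pi$ (no interior estimate is needed), so Phragm\'en--Lindel\"of and Liouville give $G_2\equiv 0$. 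The key new idea is the endgame: having $(w(\cdot;\lambda),f)\equiv 0$ for all $\lambda$, the paper does \emph{not} invoke the Cauchy-problem argument (which would require $G_1\equiv 0$), but instead notes that $w(\cdot;\lambda)$ is exactly the solution of \eqref{3.1} satisfying $y_1(0)=h_0y_2(0)$; adjoining the auxiliary condition $y_1(1)=y_2(1)$ yields a \emph{weakly $B$-regular} BVP, whose root functions lie in $\mathrm{span}\{D_\lambda^k w(\cdot;\lambda)\}$ and are complete by Theorem~\ref{th2.1}, forcing $f=0$.
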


\begin{proof}[Proof]
The line $\{\lambda \in \mathbb{C} : \Re ({ib_1}\lambda) = \Re
({ib_2}\lambda)\}$ divides the complex plane into two half-planes.
By Proposition~\ref{BirkSys}, in each of these half-planes
equation~\eqref{3.1} has the fundamental system of solutions
$\{Y_1(x;\lambda),Y_2(x;\lambda)\}$ satisfying the asymptotics
  \begin{equation}\label{syst.6}
  Y_1(x;\lambda)=\binom{e^{i{b_1}\lambda x}(1+o(1))}{e^{i{b_1}\lambda x}o(1)}
   \qquad \text{and} \qquad
  Y_2(x;\lambda)=\binom{e^{i{b_2}\lambda x}o(1)}{e^{i{b_2}\lambda x}(1+o(1))},
  \end{equation}
as $\lambda\to\infty$  uniformly with respect to $x\in [0,1]$. In
particular, in  these half-planes,
  \begin{equation}\label{syst.7}
  Y_1(0;\lambda)=\binom{1+o(1)}{o(1)}
   \quad \text{and} \quad Y_2(0;\lambda)=\binom{o(1)}{1+o(1)}  \quad \text{as}\quad  \lambda\to\infty.
  \end{equation}
Let  $\displaystyle
\Phi_1(x;\lambda)=\binom{\varphi_{11}(x;\lambda)}{\varphi_{21}(x;\lambda)}$
and
$\displaystyle\Phi_2(x;\lambda)=\binom{\varphi_{12}(x;\lambda)}{\varphi_{22}(x;\lambda)}$
stand for the solutions of the Cauchy problem for
system~\eqref{3.1} satisfying the initial conditions
  \begin{equation}\label{9}
  \Phi_1(0;\lambda)=\binom{1}{0}
  \qquad \text{and} \qquad \Phi_2(0;\lambda)=\binom{0}{1}.
  \end{equation}
Then it follows from  \eqref{syst.7} and \eqref{9} that in any of
the above half-planes
  \begin{eqnarray}\label{syst.8}
  \Phi_1(x;\lambda)=(1+o(1))Y_1(x;\lambda)+o(1)Y_2(x;\lambda),\nonumber  \\
  \Phi_2(x;\lambda) = o(1)Y_1(x;\lambda) + (1+o(1))Y_2(x;\lambda).
  \end{eqnarray}
Hence the corresponding characteristic determinant is
  \begin{multline}\label{syst.10}
  \Delta(\lambda)=\det
  \begin{pmatrix}
        1                        &  -h_0 \\
        \varphi_{11}(1;\lambda)  &  \varphi_{12}(1;\lambda)-h_1
  \end{pmatrix}
= \\
=-h_1+h_0\varphi_{11}(1;\lambda)+\varphi_{12}(1;\lambda)=-h_1+h_0
e^{i{b_1}\lambda }+o(e^{i{b_1}\lambda})+o(e^{i{b_2}\lambda}).
  \end{multline}
The vector function
  \begin{equation}\label{syst.11}
  w(x;\lambda)=\binom{w_1(x;\lambda)}{w_2(x;\lambda)}=h_0\Phi_1(x;\lambda)+\Phi_2(x;\lambda),
  \quad \lambda\in \Bbb C,
  \end{equation}
satisfies both the equation~\eqref{3.1} and the first of the
boundary conditions~\eqref{5.2}.
Let the vector function $f(x)=\col(f_1(x),f_2(x))$ be orthogonal to the system
of root functions of problem~\eqref{3.1}, \eqref{5.2}. Then the quotient
  \begin{equation}\label{syst.12}
  F(\lambda)=\frac{\left( w(x;\lambda),f(x) \right)}{\Delta(\lambda)}=
  \frac{\int_0^1
  \left(w_1(x;\lambda)\overline{f_1(x)}+w_2(x;\lambda)\overline{f_2(x)}\right)\,dx}
  {-h_1 + h_0e^{i{b_1}\lambda } + o(e^{i{b_1}\lambda})+o(e^{i{b_2}\lambda})}
  \end{equation}
is entire function of at most first growth.

Introduce the sector $S_{b_1,b_2}$ by setting
  \begin{equation}\label{syst.13}
S_{b_1,b_2} := \{\theta\in \Bbb C:\   0<\Re (i{b_2}\theta)<\Re
(i{b_1}\theta)\}.
  \end{equation}
Then, for $t\to +\infty$, we obtain:
  \begin{eqnarray}\label{syst.14}
  \int_0^1 \left(w_1(x;\theta t)\overline{f_1(x)}+w_2(x;\theta t)\overline{f_2(x)}\right)\,dx \nonumber  \\
  =O\left(\int_0^1 |e^{i{b_1}\theta
  tx}|(|f_1(x)|+|f_2(x)|)\,dx\right) = o(|e^{i{b_1}\theta t}|),\quad \theta\in  S_{b_1,b_2}.
  \end{eqnarray}
Similarly, we have
\begin{equation}\label{syst.15}
  \Delta(\lambda)= {\Delta(\theta t)} = -h_1+h_0e^{i{b_1}\theta t}+o(e^{i{b_1}\theta t})+o(e^{i{b_2}\theta t})\sim | h_0 e^{i{b_1}\theta t}|,
\quad  \theta\in  S_{b_1,b_2},
     \end{equation}
as $t\to \infty.$ Combining  \eqref{syst.14} with \eqref{syst.15}
we arrive at the relation
  \begin{equation}\label{syst.16}
\lim_{t\to +\infty}F(\theta t) = \lim_{t\to +\infty}
\frac{\int_0^1 \left(w_1(x;\theta t)\overline{f_1(x)} +
w_2(x;\theta t)\overline{f_2(x)}\right)\,dx} {\Delta(\theta t)}
=0,\qquad \theta\in  S_{b_1,b_2}.
  \end{equation}
On the other hand, for $\theta\in  S_{b_1,b_2}$ one gets
  \begin{eqnarray*}
  \int_0^1 \left(w_1(x;\theta t)\overline{f_1(x)}+w_2(x;\theta t)\overline{f_2(x)}\right)\,dx  \nonumber \\
  =O\left(\int_0^1 |e^{i{b_2}\theta tx}|(|f_1(x)|+|f_2(x)|)\,dx\right)\to
  0\quad \text{as}\quad  t\to -\infty,\quad  \theta\in
  S_{b_1,b_2},
  \end{eqnarray*}
 and
  \begin{equation*}
  \Delta(\lambda)= {\Delta(\theta t)} = -h_1 + h_0 e^{i{b_1}\theta t}+o(e^{i{b_1}\theta t})+o(e^{i{b_2}\theta t})\to -h_1
  \end{equation*}
as $t\to -\infty$, $\theta\in  S_{b_1,b_2}$.

Combining these estimates we obtain
  \begin{equation}\label{syst.19}
  \lim_{t\to -\infty}
  \frac{\int_0^1 \left(w_1(x;\theta t)\overline{f_1(x)}+w_2(x;\theta t)\overline{f_2(x)}\right)\,dx}
  {\Delta(\theta t)}
  =0, \qquad \theta\in  S_{b_1,b_2}.
      \end{equation}

Choose numbers $\theta_1, \ \theta_2\in S_{b_1,b_2}$ not lying on
the same line  with the origin. Then the rays $\theta_1t,\
\theta_2t\ (t>0)$ and $\theta_1t,\ \theta_2t\ (t<0)$ divide the
complex plane into four sectors with openings less than $\pi$. It
follows from  estimates \eqref{syst.16} and \eqref{syst.19} that
the function  $F(\cdot)$ is bounded on these rays. Being an entire
function of order not exceeding one, the function $F(\cdot)$ is
bounded on each of these sectors, by the Phragmen-Lindel\"of
theorem. Thus, $F(\cdot)$ is bounded on the whole complex plane
and, by the Liouville theorem, it is a constant. It follows from
\eqref{syst.19} that $F(\lambda)\equiv 0$.

Thus, the vector function $f(x)$ is orthogonal to $w(x;\lambda)$
for all $\lambda$. In particular, it is  orthogonal to  all
solutions of the system~\eqref{3.1} subject to the following
boundary conditions
  \begin{equation}\label{syst.20}
  \begin{cases}
y_1(0)=h_0 y_2(0) \\
y_1(1)=y_2(1).
  \end{cases}
  \end{equation}
In this case $J_{13}J_{24}\neq0$, and conditions \eqref{syst.20}
are weakly regular. By Theorem \ref{th2.1}, the system  of the
root functions  of the  problem~\eqref{3.1}, \eqref{syst.20} is
complete in $L^2\bigl([0,1];{\Bbb C}^2\bigr).$   Hence
$f(x)\equiv0$.

The minimality property is implied by Lemma~\ref{Lem_minimality}.
      \end{proof}
Theorems~\ref{th2.1} and~\ref{th7.1} make it possible to describe
all boundary conditions for systems~\eqref{3.1} with $Q=0$ such
that the root functions system of the problem~\eqref{3.1},
\eqref{3.3} is complete.
  \begin{corollary}\label{cor6.2}
Let $Q=0$ in the assumptions of Theorem~\ref{th7.1}. Then the
system of root functions of problem~\eqref{3.1}, \eqref{3.3} is
incomplete if and only if the pair of the boundary
conditions~\eqref{3.3} is equivalent to that contained  at least
one of the "Volterra"\ conditions: $y_j(0)=0$ or $y_j(1)=0, \ j\in
\{1,2\}$.
  \end{corollary}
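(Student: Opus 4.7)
The statement is an ``if and only if''. For the ``if'' direction, assume one of the boundary conditions is equivalent to a Volterra condition $y_j(k)=0$ for some $j\in\{1,2\}, k\in\{0,1\}$. With $Q=0$ the system decouples and every solution has $y_j(x;\lambda)=c_je^{ib_j\lambda x}$; the condition $y_j(k)=0$ forces $c_j=0$. Hence every eigenfunction, and (by induction on the chain equation $(L-\lambda)u=v$) every associated function, has vanishing $j$th component. The root system thus lies in the closed proper subspace $\{f\in L^2([0,1];\mathbb{C}^2): f_j\equiv 0\}$, whose orthogonal complement is infinite-dimensional; the defect is infinite.

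For the ``only if'' direction assume no pair of boundary conditions equivalent to~\eqref{3.3} contains a Volterra condition. Write $C=(a_{ij})_{i,j=1,2}$, $D=(a_{ij})_{i=1,2;j=3,4}$ and consider the six $2\times 2$ minors $J_{jk}$ of the matrix $(C\ D)$, as in Section~5. A direct linear-algebra computation shows that the pair~\eqref{3.3} is equivalent to one containing $y_1(0)=0$ iff $J_{23}=J_{24}=J_{34}=0$, and analogously the vanishing triples $(J_{13},J_{14},J_{34})$, $(J_{12},J_{14},J_{24})$, $(J_{12},J_{13},J_{23})$ correspond to $y_2(0)=0$, $y_1(1)=0$, $y_2(1)=0$. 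Since $a=b_1/b_2\notin\mathbb{R}$, the two lines $l_j=\{z:\Re(z/b_j)=0\}$ are distinct and partition $\mathbb{C}$ into four open sectors; in them $\det T_{zB}(C,D)$ equals (up to sign) $J_{12}$, $J_{14}$, $J_{34}$, or $J_{23}$, with opposite sectors corresponding to the pairs $\{J_{12},J_{34}\}$ and $\{J_{14},J_{23}\}$.

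The proof is completed by case analysis on the four ``principal'' minors $\{J_{12},J_{14},J_{34},J_{23}\}$. If at least three of them are nonzero, or if both members of one opposite pair are nonzero, then three points $z_1,z_2,z_3$ can be placed in the corresponding good sectors (taking two in one sector and one in the opposite, in the latter case) so that the origin is interior to $\triangle_{z_1z_2z_3}$; the boundary conditions are then weakly $B$-regular and Theorem~\ref{th2.1} gives completeness. Otherwise exactly two principal minors are nonzero and correspond to adjacent sectors, the cases of zero or one nonzero principal minor being ruled out by the Pl\"ucker relation $J_{12}J_{34}-J_{13}J_{24}+J_{14}J_{23}=0$ together with the ``no Volterra'' hypothesis. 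In any of the four adjacent subcases, the Pl\"ucker relation forces exactly one of $J_{13},J_{24}$ to vanish and the other to be nonzero, which in turn forces one column of $(C\ D)$ to be zero; solving the boundary conditions for the corresponding pair of unknowns, and applying if necessary one or both of the natural symmetries $x\mapsto 1-x$ and $y_1\leftrightarrow y_2$ (both of which preserve $Q=0$ and the hypothesis $b_1/b_2\notin\mathbb{R}$), one obtains precisely the form~\eqref{5.2} of Theorem~\ref{th7.1} with $h_0h_1\ne 0$ (the non-vanishing of $h_0$ and $h_1$ again following from ``no Volterra'' applied to the conditions $y_1(0)=0$ and $y_1(1)=0$). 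Theorem~\ref{th7.1} then yields completeness. The principal technical difficulty is this bookkeeping of subcases, and the systematic use of the Pl\"ucker relation to eliminate the degenerate configurations; once that is done, the reduction of each adjacent subcase to Theorem~\ref{th7.1} via the symmetries is straightforward.
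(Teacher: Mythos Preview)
Your proof is correct and follows essentially the same strategy as the paper: handle the ``if'' direction by noting that a Volterra condition forces all root functions into a proper coordinate subspace, and handle the ``only if'' direction by first invoking Theorem~\ref{th2.1} when the conditions are weakly $B$-regular and then reducing the remaining configurations to Theorem~\ref{th7.1}.

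The organization differs. The paper argues the contrapositive: assuming incompleteness, Theorem~\ref{th2.1} forces both products $J_{14}J_{23}$ and $J_{12}J_{34}$ to vanish; a single ``without loss of generality'' (implicitly using the symmetries $x\mapsto 1-x$ and $y_1\leftrightarrow y_2$) reduces to $J_{14}=J_{34}=0$, and a two-line split on $J_{13}$ finishes. Your version is more explicit: you characterize each Volterra condition by a vanishing triple of minors, run a full case analysis on the four ``principal'' minors, and use the Pl\"ucker relation $J_{12}J_{34}-J_{13}J_{24}+J_{14}J_{23}=0$ to pin down which of $J_{13},J_{24}$ vanishes in each adjacent subcase, thereby forcing a column of $(C\ D)$ to be zero; you then apply the symmetries overtly to bring each subcase to the normal form~\eqref{5.2}. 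This buys transparency (one sees exactly why the excluded configurations are Volterra) at the cost of length. Two small remarks: the clause ``at least three nonzero'' is redundant, since three nonzero among four already forces one opposite pair to be nonzero; and the phrase ``which in turn forces one column of $(C\ D)$ to be zero'' is correct but relies not just on one of $J_{13},J_{24}$ vanishing but on combining that with the two already-vanishing adjacent minors (you use this implicitly, but it is worth stating).
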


\begin{proof}[Proof]
\emph{Necessity.} Assume for simplicity that one of the boundary
conditions is of the form $y_1(0)=0$. Then the system of root
functions of problem~\eqref{3.1}, \eqref{3.3} is either empty or
has the form $\{\col(0, e^{(2 \pi i (n+\alpha) x)})\}_{n \in
\mathbb{Z}}$ for some $\alpha \in \mathbb{C}.$ Clearly, it is
incomplete in  $L^2\bigl([0,1];{\Bbb C}^2\bigr)$.

\emph{Sufficiency.} Assume that the system of root functions is
incomplete. Then by Theorem~\ref{th2.1} condition~\eqref{5.1} is
violated. Without loss of generality we can assume that $J_{14} =
0$ and $J_{34} = 0$. Consider two cases.

\emph{(i)} $J_{13} = 0$. Then the matrix composed of $1^{st}$,
$3^{rd}$ and $4^{th}$ columns of the matrix $(C\,\,D)$ has rank 1.
By equivalent transformations the matrix $(C\,\,D)$ of boundary
conditions  is reduced to the matrix with the only one non-zero
entry in the second row. In other words, one of the boundary
conditions is reduced to a "Volterra"\ condition $y_2(0)=0$.

\emph{(ii)} $J_{13} \ne 0$. Then the boundary conditions are
equivalent to the following ones
$$
y_1(0)=h_0 y_2(0),\quad y_1(1)=h_1 y_2(0),
$$
that is, to conditions~\eqref{5.2} with arbitrary $h_0,h_1$.

By Theorem~\ref{th7.1} we have $h_0 h_1=0$. Hence again one of the
condition is of Volterra type.
\end{proof}
We emphasize that as distinct from Theorem \ref{th3.1} the
assumptions of Theorem \ref{th7.1}  do not depend on $Q$.
Moreover,  Theorem \ref{th7.1} shows that
Proposition~\ref{prop.neces} is no longer valid whenever $B\not =
B^*.$ In other words, as distinct from the case of $B=B^*,$
\emph{the weak regularity of boundary conditions \eqref{1.3} is
not necessary for completeness of the operator $L_{C,D}(0)$ with
$Q=0.$} However, the following criterion takes place.
   \begin{corollary}\label{cor6.4}
Let $n=2$ and  $B = \diag(b^{-1}_1,b^{-1}_2)$ with  $a:= b_1
b^{-1}_2\notin{\Bbb R}$. Then the boundary conditions \eqref{3.3}
are weakly regular if and only if both operators $L_{C,D}(0)$ and
$L_{C,D}(0)^*$ are complete in $L^2\bigl([0,1];{\Bbb C}^2\bigr)$.
   \end{corollary}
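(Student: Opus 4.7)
The plan is to treat the two implications separately, using Theorem~\ref{th2.1} together with Corollary~\ref{cor2.1A} for the forward direction and using Corollary~\ref{cor6.2} combined with a case analysis for the reverse direction. The key auxiliary step is to reformulate weak $B$-regularity in the present setting as the algebraic condition~\eqref{5.1}.

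\textbf{Forward direction ($\Rightarrow$).} If the boundary conditions are weakly $B$-regular, Theorem~\ref{th2.1} applied with $Q=0$ gives the completeness of $L_{C,D}(0)$. By Corollary~\ref{cor2.1A} the adjoint boundary conditions are weakly $B^*$-regular, and since $\overline{a}=\overline{b_1}/\overline{b_2}\notin\mathbb{R}$ the hypotheses of Theorem~\ref{th2.1} apply again to $L_{C,D}(0)^*$, yielding its completeness.

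\textbf{Reformulation of weak regularity.} For $n=2$ with $b_1/b_2\notin\mathbb{R}$, the two lines $\{z:\Re(zb_k^{-1})=0\}$, $k=1,2$, divide the $z$-plane into four open sectors. In each sector the columns of $T_{zB}(C,D)$ are selected from $C$ or $D$ according to the signs of $\Re(zb_k^{-1})$, so the four sector determinants equal $J_{12}$, $J_{14}$, $-J_{23}$, $J_{34}$, with $\{J_{12},J_{34}\}$ and $\{J_{14},J_{23}\}$ corresponding to the two pairs of opposite sectors. The key geometric observation is that if \emph{one} opposite pair of sectors has both determinants non-zero, then a triangle enclosing the origin can be formed with two vertices in one of the sectors of that pair and the third vertex suitably placed in the opposite sector; conversely, if both opposite pairs contain a zero determinant, the remaining non-zero sectors lie in a single closed half-plane and no triangle with vertices in non-zero sectors can enclose the origin. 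Hence weak $B$-regularity is equivalent to condition~\eqref{5.1}: $J_{12}J_{34}\neq 0$ or $J_{14}J_{23}\neq 0$.

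\textbf{Reverse direction ($\Leftarrow$) via contrapositive.} Assume \eqref{5.1} fails, so both products vanish; this splits into four symmetric sub-cases. I treat $J_{12}=J_{14}=0$, the others being analogous. The hypothesis forces the columns $a_{\cdot 1}$, $a_{\cdot 2}$, $a_{\cdot 4}$ of $(C\ D)$ to be pairwise parallel. If $a_{\cdot 1}=0$, then $(1,0,0,0)\in\ker(C\ D)$, and unwinding the adjoint relation~\eqref{3.t3.m.2} shows that every $\psi\in\ker(C_*\ D_*)$ satisfies $\psi_{1,1}=0$; thus $g_1(0)=0$ is equivalent to one of the adjoint boundary conditions, and $L_{C,D}(0)^*$ carries a Volterra condition. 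Otherwise $a_{\cdot 2},a_{\cdot 4}$ are scalar multiples of the non-zero $a_{\cdot 1}$, which automatically gives $J_{24}=0$; the simultaneous vanishing $J_{12}=J_{14}=J_{24}=0$ together with the maximality condition~\eqref{1.4} is exactly what is required for the Volterra row $(0,0,1,0)$ to lie in the row span of $(C\ D)$, i.e., for $y_1(1)=0$ to be equivalent to one of the original boundary conditions, so $L_{C,D}(0)$ itself has a Volterra condition. In either case Corollary~\ref{cor6.2} — whose hypotheses cover $L_{C,D}(0)^*$ as well as $L_{C,D}(0)$ because $\overline{a}\notin\mathbb{R}$ — yields incompleteness. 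The main obstacle of the proof is the geometric reformulation of WR in the preceding paragraph, which must correctly handle non-degenerate triangles with two vertices in a single open sector; after that, the case analysis linking vanishing patterns of the minors $J_{jk}$ to Volterra boundary conditions for either $L_{C,D}(0)$ or its adjoint is a careful but routine bookkeeping exercise.
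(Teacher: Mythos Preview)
Your proof is correct. The forward direction coincides with the paper's, but the reverse direction follows a genuinely different route.

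The paper argues by contradiction: assuming both $L_{C,D}(0)$ and $L_{C,D}(0)^*$ are complete while the boundary conditions are not weakly regular, it invokes (the proof of) Corollary~\ref{cor6.2} to reduce the boundary conditions to the concrete normal form~\eqref{5.2} with $h_0 h_1\neq 0$, then computes the adjoint boundary conditions explicitly and finds that one of them is $y_2(1)=0$, contradicting completeness of $L_{C,D}(0)^*$ via Corollary~\ref{cor6.2}. Your argument instead first establishes the clean algebraic characterization ``weak $B$-regularity $\Leftrightarrow$ \eqref{5.1}'' via a direct geometric analysis of the four sectors (the paper only uses the implication \eqref{5.1} $\Rightarrow$ weak regularity, through Corollary~\ref{cor2.1}), and then runs a case analysis on the vanishing pattern of the minors $J_{jk}$, showing in each sub-case that either $(C\ D)$ itself or the adjoint pair $(C_*\ D_*)$ carries a Volterra row; the adjoint case is handled abstractly through the $w$-orthogonality relation~\eqref{3.t3.m.2} rather than by an explicit computation. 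What your approach buys is a self-contained argument that does not rely on the specific normal form~\eqref{5.2} or on unpacking the proof of Corollary~\ref{cor6.2}; what the paper's approach buys is brevity, since the reduction to~\eqref{5.2} has already been carried out and the adjoint of that one normal form is quick to write down.
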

   \begin{proof}[Proof]
\emph{Necessity} is implied by Theorem \ref{th2.1} and Corollary
\ref{cor2.1A}.

\emph{Sufficiency.} Assume that both operators $L_{C,D}(0)$ and
$L_{C,D}(0)^*$ are complete  in $L^2\bigl([0,1];{\Bbb C}^2\bigr)$
but the BC \eqref{3.3} are not weakly regular. Then, by Corollary
\ref{cor6.2},  we can assume  that BC are equivalent to conditions
\eqref{5.2}. In this case the adjoint operator $L^*_{C,D}$ is
defined by the differential expression $L^* = -iB^*d/dx + Q^*(x)$
and the boundary conditions
     \begin{equation}
\overline{h_0} y_1(0)+\overline{a} y_2(0) - \overline{h_1}
y_1(1)=0, \qquad y_2(1) = 0.
     \end{equation}
The second condition is  of Volterra type and,  by
Corollary~\ref{cor6.2}, operator $L^*_{C,D}$ is incomplete. This
contradicts the assumption.
   \end{proof}

\noindent {\bf Acknowledgments.} We are grateful to Anton Lunyov
for careful reading the manuscript and making numerous remarks. We
are also appreciate his kind help with preparing
Examples~\ref{ex.k.k+1} and~\ref{ex.3.3}.

\medskip

\bigskip

Institute of Applied Mathematics and Mechanics of NASU, Donetsk
\qquad

e-mail: mmm@telenet.dn.ua

Donetsk National University, Donetsk

e-mail: oridoroga@skif.net

\end{document}